\newtheorem{lemma}{Lemma}[section]
\newtheorem{proposition}[lemma]{Proposition}
\newtheorem{theorem}[lemma]{Theorem}
\newtheorem{corollary}[lemma]{Corollary}
\newtheorem{prop}[lemma]{Proposition}
\newtheorem{cor}[lemma]{Corollary}
\theoremstyle{definition}
\newtheorem{definition}[lemma]{\sl Definition}
\theoremstyle{remark}
\newtheorem{remark}[lemma]{Remark}
\newcommand{\Hom}{\operatorname{Hom}}
\newcommand{\dlim}{\underset{n \rightarrow \infty}{\lim}}
\newcommand{\ra}{\operatorname{\rightarrow}}
\newcommand{\uTor}{\underline{\mathcal{T}{\it{or}}}}
\newcommand{\usHomt}{{\underline{{\mathcal{H}}{\it{om}}}}_{\widetilde{A^{op}}}}
\newcommand{\sHomt}{{\mathcal{H}{\it{om}}}_{\widetilde{A^{op}}}}
\newcommand{\sHom}{{\mathcal{H}}{\it{om}}_{A}}
\newcommand{\sHomb}{{\mathcal{H}}{\it{om}}_{B}}
\newcommand{\sHomc}{{\mathcal{H}}{\it{om}}_{C}}
\newcommand{\usHom}{{\underline{{\mathcal{H}}{\it{om}}}}_{A}}
\newcommand{\usHomao}{{\underline{{\mathcal{H}}{\it{om}}}}_{\widetilde{A^{op}}}}
\newcommand{\usHomb}{{\underline{{\mathcal{H}}{\it{om}}}}_{B}}
\newcommand{\usHomc}{{\underline{{\mathcal{H}}{\it{om}}}}_{C}}
\newcommand{\usHomk}{{\underline{{\mathcal{H}}{\it{om}}}}_{K}}
\newcommand{\sExt}{{\mathcal{E}xt}_{A}}
\newcommand{\sExtb}{{\mathcal{E}xt}_{B}}
\newcommand{\usExt}{{\underline{\mathcal{E}xt}}_{A}}
\newcommand{\usExtb}{{\underline{\mathcal{E}xt}}_{B}}
\newcommand{\intotimes}{{\underline{\otimes}}_{A}}
\newcommand{\intotimesb}{{\underline{\otimes}}_{B}}
\numberwithin{equation}{section}
\begin{document}

\title{Local Duality for Connected $\mathbb{Z}$-algebras}
\author{I. Mori}
\address{Shizuoka University}
\email{mori.izuru@shizuoka.ac.jp}
\author{A. Nyman}
\address{Western Washington University}
\email{adam.nyman@wwu.edu}
\keywords{AS-regular algebras, $\mathbb{Z}$-algebras, local duality.}

\thanks{The first author was supported by JSPS Grant-in-Aid for Scientific Research (C) 16K05097 and JSPS Grant-in-Aid for Scientific Research (B) 16H03923.}

\begin{abstract}
We develop basic homological machinery for $\mathbb{Z}$-algebras in order to prove a version of local duality for Ext-finite connected $\mathbb{Z}$-algebras.  As an application, we compare two notions of regularity for such algebras.
\end{abstract}

\maketitle

\pagenumbering{arabic}

Throughout this paper, $k$ will denote a field over which all objects are defined.

\section{Introduction}
In algebraic geometry, projective varieties over $k$ are studied via their homogenous coordinate rings, which are commutative, connected, finitely generated $\mathbb{N}$-graded $k$-algebras.  This simple observation inspired some of the early development in noncommutative algebraic geometry, in which geometric objects are defined in terms of (possibly noncommutative) $\mathbb{Z}$-graded rings, specializing to the usual commutative theory \cite{az1}, \cite{vervekind}.  The subject has since blossomed, finding points of contact with physics, differential geometry and number theory.

In another approach to the subject, initially explored in \cite{bp} and \cite{polish} and further developed in \cite{quadrics} and \cite{sierra}, geometric objects are constructed from $\mathbb{Z}$-algebras, which are defined in terms of pre-additive categories whose objects are indexed by $\mathbb{Z}$.  From this perspective, the $\mathbb{Z}$-algebra associated to a pre-additive category is its ring of morphisms.  For example, if $X$ is a projective variety embedding in a projective space and $\mathcal{O}(i)$ denotes the $i$-th tensor power of the tautological line bundle, one defines $D_{ij} = \Hom_{\mathcal{O}_X}(\mathcal{O}(-j), \mathcal{O}(-i))$ and one gets the $\mathbb{Z}$-algebra $\bigoplus_{i,j}D_{ij}$ with multiplication induced by composition.  In fact, given any collection of objects in a category indexed by a set $I$, one can form the associated $I$-algebra in a similar way.

Not only can one recover the theory of $\mathbb{Z}$-graded algebras from that of $\mathbb{Z}$-algebras in the sense that every $\mathbb{Z}$-graded algebra is graded Morita equivalent to a $\mathbb{Z}$-algebra \cite{sierra}, \cite{quadrics}, but also various results from graded ring theory have much simpler conceptual proofs when viewed from the $\mathbb{Z}$-algebra perspective (see, for example \cite[Theorem 1.2]{sierra}).  As another example of the utility of $\mathbb{Z}$-algebras, M. Van den Bergh discovered, via deformation theoretic arguments, that there should be more noncommutative quadric surfaces than had been found as noncommutative spaces arising from cubic AS-regular algebras of dimension 3, and realized these missing spaces as having $\mathbb{Z}$-algebras coordinate rings \cite{quadrics}.  Van den Bergh also discovered a general notion of noncommutative $\mathbb{P}^{1}$-bundles over commutative schemes using $\mathbb{Z}$-algebras that generalized the initial notion studied by D. Patrick utilizing $\mathbb{Z}$-graded algebras \cite{patrick}.

Although the $\mathbb{Z}$-algebra approach to noncommutative projective geometry may be the more natural one as discussed above, for historical reasons it has not been pursued as vigorously as the $\mathbb{Z}$-graded approach.  The goal of this paper is to develop enough basic homological machinery of modules over $\mathbb{Z}$-algebras to prove a $\mathbb{Z}$-algebra version of local duality \cite[Theorem 5.1(2)]{dualizing}.  As an application of local duality, we explore the relationship between several notions of regularity studied in \cite{minamoto} in the $\mathbb{Z}$-graded context, AS-regularity and ASF-regularity.  In particular we show, in Theorem \ref{theorem.main2}, that these conditions are equivalent under a hypothesis which, in the $\mathbb{Z}$-graded context, follows from the existence of a balanced dualizing complex.  In the $\mathbb{Z}$-graded case, the equivalence of these conditions follows from \cite[Theorem 3.12]{minamoto} and \cite[Theorem 2.19]{moriueyama}.  In addition, we employ local duality to prove a $\mathbb{Z}$-algebra version of \cite[Theorem 3.3]{jorgensen}, which gives a duality between the bounded derived categories ${\sf D}^{b}({\sf coh }A)$ and ${\sf D}^{b}({\sf coh }A^{op})$.  As is stated and substantiated in \cite{jorgensen}, this duality is fundamental to many of the applications of local duality in both the commutative and noncommutative worlds.

One of the themes of projective geometry is to characterize a space via its homological properties.  Although there are currently few such results in noncommutative projective geometry, the theory would benefit from further development in this direction.  In \cite{abstractpn} we use both local duality and part of our comparison of regularity conditions (Corollary \ref{cor.af}) to obtain a $\mathbb{Z}$-algebra version of the recognition theorem of Mori and Ueyama \cite{moriueyama} characterizing those abelian categories which are noncommutative spaces having a $\mathbb{Z}$-graded AS-regular coordinate ring.

We now give a brief description of the contents of this paper.  In Section \ref{sec.iindex}, we recall the definition and basic properties of $I$-algebras and their opposites. We then define, in Section \ref{section.basics}, the main objects of study in this paper, connected $\mathbb{Z}$-algebras.  It turns out that, as in the $\mathbb{Z}$-graded case, the Ext-finite condition introduced in \cite{dualizing} plays a key role in studying and using the torsion functor in the context of local duality.  Next, in Sections \ref{section.tensor} and \ref{section.shom}, we introduce internal tensor functors and their left derived functors and internal hom functors and their right derived functors, and prove enough of the standard results about them to establish local duality and study regularity.  The reason these functors must be redefined in the $\mathbb{Z}$-algebra setting stems from the fact that, unlike in the $\mathbb{Z}$-graded case, bimodules over $\mathbb{Z}$-algebras are indexed by {\it pairs} of integers.  Although these sections contain material similar to results appearing in \cite{abstract} and \cite{chan}, they develop the material in greater generality in order to accommodate the applications in this paper.  In Section \ref{section.ld}, we define and study derived functors of hom and tensor, and then prove local duality.  We follow the approach to this material from \cite{amnon}, making some minor changes for the $\mathbb{Z}$-algebra setting.  Finally, in Section \ref{section.regularity}, we use local duality to prove Theorem \ref{theorem.main}, our version of \cite[Theorem 3.3]{jorgensen} mentioned above.  We conclude the paper by describing $\mathbb{Z}$-algebra versions of two notions of regularity studied in \cite{minamoto}, and showing that these conditions are equivalent (Theorem \ref{theorem.main2}).

\section{$I$-algebras}  \label{sec.iindex}

Let $I$ be a set.  In this section, we recall (from \cite[Section 2]{quadrics}) the notions of $I$-indexed algebras and $I$-indexed analogues of various ring theoretic concepts.  We recall that an {\em $I$-indexed algebra} $A$ is a pre-additive category whose objects are indexed by $I$ and denoted $\{\mathcal{O}_{i}\}_{i \in I}$, and whose morphisms are denoted $A_{ij} := \operatorname{Hom}(\mathcal{O}_{j}, \mathcal{O}_{i})$. If the category is in fact $k$-linear, which we assume from now on, then we say $A$ is an {\em $I$-indexed $k$-algebra} or, abusing terminology, an {\em $I$-algebra}.   We will often further abuse terminology and call the ring $A = \bigoplus_{i,j}A_{ij}$ with multiplication given by composition an $I$-algebra. We let $e_{i}$ denote the identity in $A_{ii}$.

Let $A$ be an $I$-indexed algebra. A {\em (graded) right $A$-module} is a graded abelian group $M = \bigoplus_{i \in I} M_i$ with multiplication maps $M_i \times A_{ij} \ra M_j$ satisfying the usual module axioms (see \cite{quadrics} for more details). We let ${\sf Gr }A$ denote the category of graded right $A$-modules and we let $A-{\sf Gr}$ denote the category of graded left $A$-modules.  We call a graded right $A$-module {\it free} if it is a direct sum of modules of the form $e_{i}A$ for possibly different $i \in I$, and {\it free and finitely generated} is this direct sum is finite.  By (see \cite[Section 2]{quadrics}), the category ${\sf Gr }A$ is a Grothendieck category so has enough injectives, and the objects $e_{i}A$ are projective, so that ${\sf Gr }A$ has enough projectives.

As for rings, one way to see the symmetry between left and right modules is to introduce the {\em opposite algebra $A^{op}$} which is just the opposite category.  Specifically, $A^{op}$ is the $I$-algebra with $(A^{op})_{ij} = A_{ji}$, and with multiplication of $f \in (A^{op})_{ij}$ and $g \in (A^{op})_{jk}$ defined as $f\cdot g:=gf \in A_{ki}=(A^{op})_{ik}$.

Throughout this paper, we let $K$ denote the $\mathbb{Z}$-algebra with
$$
K_{ij} := \begin{cases} k & \mbox{if $i=j$} \\ 0 & \mbox{otherwise}\end{cases}
$$
and with multiplication induced by that of $k$.

\begin{definition} \label{def.B}
Let $A$ and $B$ be $I$-algebras.  We let ${\sf Bimod }(A-B)$ denote the following category:  an object of ${\sf Bimod }(A-B)$ is a set $M:= \bigoplus_{i,j \in I}M_{ij}$ where $M_{ij}$ is an abelian group for all $i,j \in I$, together with group homomorphisms $\mu_{ijk}:M_{ij} \otimes_{k} B_{jk} \rightarrow M_{ik}$ and $\psi_{ijk}: A_{ij} \otimes_{k} M_{jk} \rightarrow M_{ik}$ for all $i,j,k \in I$ making $M$ an $A$-$B$ bimodule.  Morphisms of objects in ${\sf Bimod }(A-B)$ are defined in the obvious way.
\end{definition}

Let $A$ be an $I$-algebra and let $B$ be an $H$-algebra. We define an $I \times H$-algebra $A \otimes_k B$ by
$$(A \otimes_k B)_{(i,h)(i',h')} := A_{ii'} \otimes_k B_{hh'} .$$
If $A$ is an $I$-algebra, then the {\em enveloping algebra} $A^{op} \otimes_k A$ is an $I^2$-algebra.

The next result collects various equivalences between graded module categories.  To state it we introduce one more construction:  if $A$ is a $\mathbb{Z}$-algebra, we let $\widetilde{A^{op}}$ be the $\mathbb{Z}$-algebra defined by setting $(\widetilde{A^{op}})_{ij} := A_{-j,-i}$, with multiplication defined as follows: if $f \in (\widetilde{A^{op}})_{ij}$ and $g \in (\widetilde{A^{op}})_{jk}$, we let $f \cdot g := gf \in A_{-k,-i} = (\widetilde{A^{op}})_{ik}$.  The notation $\widetilde{A^{op}}$ will be justified in Proposition \ref{prop.equiv}.  We will also employ the following notation: if ${\sf C}$ and ${\sf D}$ are categories, we write ${\sf C} \equiv {\sf D}$ to mean there is an equivalence of categories ${\sf C} \rightarrow {\sf D}$.

\begin{prop} \label{prop.equiv}
Let $A$ and $B$ be $I$-algebras.  There are equivalences
\begin{enumerate}
\item{} ${\sf Bimod }(A-B) \equiv {\sf Gr }A^{op}\otimes_k B$,

\item{} $A^{op}-{\sf Gr} \equiv {\sf Gr }A$,

\item{} ${\sf Gr }A^{op} \equiv A-{\sf Gr}$,
\end{enumerate}
Furthermore, if $A$ is a $\mathbb{Z}$-algebra, then $A^{op}$ is left and right graded Morita equivalent to $\widetilde{A^{op}}$.
\end{prop}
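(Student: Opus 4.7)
The plan is to treat each of the four assertions as a matter of unwinding definitions and reindexing: in each case I will specify a functor on objects together with its effect on morphisms, and check that it has an obvious quasi-inverse. No deep content is at stake; the assertion is that several presentations of the same structure yield equivalent categories.

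For part (2), note that by definition $(A^{op})_{ij} = A_{ji}$, so a graded left $A^{op}$-module $N = \bigoplus_i N_i$ carries structure maps $(A^{op})_{ij}\otimes_k N_j = A_{ji}\otimes_k N_j \to N_i$, and this is precisely the data of a graded right $A$-module $M$ with $M_i := N_i$ and structure maps $M_j\otimes_k A_{ji} \to M_i$, up to the canonical transposition of tensor factors. Part (3) follows from part (2) applied to $A^{op}$ in place of $A$, using $(A^{op})^{op}=A$. For part (1), I will send $M \in {\sf Bimod }(A-B)$ to the module $F(M) \in {\sf Gr }A^{op}\otimes_k B$ with $F(M)_{(i,j)} := M_{ij}$. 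Since $(A^{op}\otimes_k B)_{(i,j)(i',j')} = A_{i'i}\otimes_k B_{jj'}$, the combined left-$A$ and right-$B$ actions of $M$ assemble into a right action
\[
M_{ij}\otimes_k A_{i'i}\otimes_k B_{jj'} \to M_{i'j'}
\]
obtained by applying $\psi_{i'ij}$ and $\mu_{i'jj'}$ in either order (they commute by the bimodule axiom). Associativity of this right action reduces to associativity of $\psi$ and $\mu$ individually together with their compatibility, and the quasi-inverse is given by reading the data backwards.

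The Morita equivalence between $A^{op}$ and $\widetilde{A^{op}}$ will follow from the observation that $\widetilde{A^{op}}$ is $A^{op}$ with its index set relabeled by the automorphism $i\mapsto -i$ of $\mathbb{Z}$: indeed $(\widetilde{A^{op}})_{ij} = A_{-j,-i} = (A^{op})_{-i,-j}$. The corresponding functor $G\colon {\sf Gr }A^{op}\to {\sf Gr }\widetilde{A^{op}}$ defined by $G(M)_i := M_{-i}$ tautologically transports the right $A^{op}$-action on $M_{-i}$ by $(A^{op})_{-i,-j}$ to the right $\widetilde{A^{op}}$-action on $G(M)_i$ by $(\widetilde{A^{op}})_{ij}$, and the analogous sign-flip is its quasi-inverse. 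The identical relabeling works on the left, giving both left and right graded Morita equivalence. The only real obstacle is bookkeeping: one must fix conventions for left versus right actions, for op versus tilde-op, and for pairs of indices in $I\times I$ versus $I^2$ at the outset, and then verify every structure map against those conventions. Once that is done, each of the four equivalences falls out immediately from the definitions.
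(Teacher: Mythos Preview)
Your proposal is correct and follows essentially the same approach as the paper, which declares parts (1)--(3) routine and only sketches the last assertion. For that last part there is a minor cosmetic difference: the paper constructs an equivalence ${\sf Gr}\,A \to \widetilde{A^{op}}\text{-}{\sf Gr}$ via $F(M)_i := M_{-i}$ and then invokes part (2) to deduce left Morita equivalence of $A^{op}$ and $\widetilde{A^{op}}$, whereas you observe directly that $(\widetilde{A^{op}})_{ij} = (A^{op})_{-i,-j}$ exhibits $\widetilde{A^{op}}$ as $A^{op}$ reindexed by $i\mapsto -i$, yielding both left and right equivalences at once; your route is slightly more direct but the content is the same.
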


\begin{proof}

The proof is routine but we include part of the proof of the last result.  We define a functor $F:{\sf Gr }A \rightarrow \widetilde{A^{op}}-{\sf Gr}$ as follows: we let $F(M)_{i} := M_{-i}$ and if $a \in (\widetilde{A^{op}})_{ji}= A_{-i,-j}$ and $m \in F(M)_{i}$, we define $a \cdot m := ma \in M_{-j}=F(M)_{j}$.  If $f:M \rightarrow N$ is a morphism in ${\sf Gr }A$, we define $F(f)_{i}=f_{-i}$.  It is now routine to check that $F$ has an inverse functor so is an equivalence.  Therefore, the fact that $A^{op}$ and $\widetilde{A^{op}}$ are left graded Morita equivalent follows from part (2).
\end{proof}

We now describe notation we will invoke in the sequel.  Let $A, B$ be $I$-algebras.  For each $h \in I$, there is a natural restriction functor $\operatorname{res}_h:{\sf Bimod}\, (A-B) \ra {\sf Gr}\, B$ defined by $M \mapsto e_h M$ and similarly for left modules.  Furthermore, for the proof of Lemma \ref{lemma.lrinjective} and the statement of Lemma \ref{lemma.bproj} below, we will use the following notation:  if $M\in A-{\sf Gr}$ and $N\in {\sf Gr} B$, then we define $M\otimes _kN\in {\sf Bimod} (A-B)$ by $(M\otimes _kN)_{ij}=M_i\otimes _kN_j$.

\begin{lemma} \label{lemma.lrinjective}
Let $h \in I$.  If $J$ is an injective $A-B$-bimodule, then $Je_{h}$ is an injective left $A$-module and $e_{h}J$ is an injective right $B$-module.
\end{lemma}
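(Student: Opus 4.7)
The plan is to prove each injectivity claim by exhibiting an exact left adjoint to the restriction functors $(-)e_h : {\sf Bimod}(A-B) \to A-{\sf Gr}$ and $e_h(-) : {\sf Bimod}(A-B) \to {\sf Gr}\, B$, so that their right adjoint character forces them to preserve injectives.

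Concretely, for the left module statement, I would work with the functor $L : A-{\sf Gr} \to {\sf Bimod}(A-B)$ defined by $L(N) := N \otimes_k e_h B$, where the bimodule structure uses the left $A$-action on $N$ and the right $B$-action on $e_h B$, in accordance with the notation recalled just before the lemma. I would then establish a natural isomorphism
\begin{equation*}
\Hom_{{\sf Bimod}(A-B)}\bigl(N \otimes_k e_h B,\, M\bigr) \;\cong\; \Hom_{A-{\sf Gr}}\bigl(N,\, Me_h\bigr)
\end{equation*}
by sending a bimodule morphism $\phi$ to the left $A$-linear map $n \mapsto \phi(n \otimes e_h)$, with inverse sending a left $A$-module map $\psi : N \to Me_h$ to $n \otimes b \mapsto \psi(n) \cdot b$. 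Naturality in $N$ and $M$ is then a routine check from the definitions, and standard adjunction yields that $(-)e_h$ is right adjoint to $L$.

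Next I would argue that $L$ is exact: given an exact sequence $0 \to N' \to N \to N'' \to 0$ in $A-{\sf Gr}$, applying $L$ gives, at bidegree $(i,j)$, the sequence $0 \to N'_i \otimes_k B_{hj} \to N_i \otimes_k B_{hj} \to N''_i \otimes_k B_{hj} \to 0$, which is exact because $k$ is a field and tensoring over $k$ preserves exactness. Since a right adjoint to an exact functor preserves injectives, if $J$ is an injective object of ${\sf Bimod}(A-B)$ then $Je_h$ is injective in $A-{\sf Gr}$, proving the first claim.

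The argument for $e_h J$ being an injective right $B$-module is entirely symmetric: one uses the left adjoint $L' : {\sf Gr}\, B \to {\sf Bimod}(A-B)$ given by $L'(N') := Ae_h \otimes_k N'$ and the analogous adjunction
\begin{equation*}
\Hom_{{\sf Bimod}(A-B)}\bigl(Ae_h \otimes_k N',\, M\bigr) \;\cong\; \Hom_{{\sf Gr}\, B}\bigl(N',\, e_h M\bigr),
\end{equation*}
together with the same exactness observation. I do not anticipate a genuine obstacle: the only point requiring care is keeping the two-sided indexing of bimodules straight so that the unit $e_h$ correctly singles out the $h$-th row/column and the adjunction units/counits are actually natural; once the conventions are pinned down, the rest is a formal consequence of the exact-left-adjoint principle.
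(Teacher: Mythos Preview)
Your proposal is correct and takes essentially the same approach as the paper: exhibit an exact left adjoint to each restriction and invoke the standard fact that right adjoints of exact functors preserve injectives. The paper's proof is terser---it writes out only the adjoint $Ae_h \otimes_k -$ for the $e_hJ$ case and leaves the other direction implicit---whereas you spell out both adjunctions and the exactness check explicitly.
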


\begin{proof}
There is an exact left adjoint to $\operatorname{res}_{h}$, $Ae_h \otimes_k -$, which sends the $B$-module $P$ to the $A-B$-bimodule defined by $A e_h \otimes_k P$. The result follows from this.
\end{proof}

There are also the usual restrictions induced by morphisms of $I$-algebras:  We let ${}_{A}\operatorname{Res}:{\sf Bimod }(A-B) \rightarrow {\sf Bimod }(A-K)$, $\operatorname{Res}_{B}:{\sf Bimod }(A-B) \rightarrow {\sf Bimod }(K-B)$ and $\operatorname{Res}_{k}:{\sf Bimod }(A-B) \rightarrow {\sf Bimod }(K-K)$ denote the obvious restriction functors.  We call an object $M \in {\sf Bimod }(A-B)$ a {\it $B$-injective} if $\operatorname{Res}_{B} M$ is injective, and we define {\it $B$-projective} similarly.
\begin{lemma} \label{lemma.bproj}
If $M \in A-{\sf Gr}$, then the object $M \otimes_{k} e_{j}B$ of ${\sf Bimod }(A-B)$ is a $B$-projective for all $j \in I$.  Therefore, $\{Ae_{i} \otimes_{k} e_{j}B\}_{i,j \in I}$ is a set of projective generators for ${\sf Bimod }(A-B)$ consisting of $B$-projectives.
\end{lemma}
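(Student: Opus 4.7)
The plan is to establish the $B$-projectivity of $M\otimes_k e_jB$ by a direct free-module argument on the right tensor factor, and to handle the projective-generator statement by transporting the question across the equivalence ${\sf Bimod }(A-B) \equiv {\sf Gr }A^{op}\otimes_k B$ supplied by Proposition~\ref{prop.equiv}(1).

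For the first assertion, I would observe that the right $B$-module structure on $M\otimes_k e_jB$ involves only the second tensor factor. Concretely, the definition immediately preceding the lemma gives $(M\otimes_k e_jB)_{il}=M_i\otimes_k B_{jl}$ with right action $(m\otimes b)\cdot b'=m\otimes bb'$, so choosing a $k$-basis $\mathcal{B}_i$ of each $M_i$ exhibits $\operatorname{Res}_B(M\otimes_k e_jB)$ as the free graded right $B$-module $\bigoplus_{i\in I}\bigoplus_{\alpha\in\mathcal{B}_i}e_jB$, and in particular as a projective object of ${\sf Gr }B$. Specializing to $M=Ae_i$ then gives the $B$-projectivity of each $Ae_i\otimes_k e_jB$.

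For the projective-generator claim I would push the question through the equivalence $F:{\sf Bimod }(A-B)\to {\sf Gr }A^{op}\otimes_k B$ of Proposition~\ref{prop.equiv}(1). A direct comparison shows $F(Ae_i\otimes_k e_jB)\cong e_{(i,j)}(A^{op}\otimes_k B)$: both sides have $(h,l)$-component $A_{hi}\otimes_k B_{jl}$, and the right action by $(A^{op}\otimes_k B)_{(h,l)(h',l')}=A_{h'h}\otimes_k B_{ll'}$ agrees on the two sides. Since the principal right modules $\{e_{(i,j)}(A^{op}\otimes_k B)\}_{(i,j)\in I\times I}$ form a set of projective generators of ${\sf Gr }A^{op}\otimes_k B$ (by the discussion of projective generators of graded module categories in Section~\ref{sec.iindex}), and since equivalences of categories preserve projective generators, the family $\{Ae_i\otimes_k e_jB\}_{i,j\in I}$ is a set of projective generators of ${\sf Bimod }(A-B)$. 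The only genuine obstacle in the argument is bookkeeping: reconciling the indexing conventions used in the construction of $A^{op}\otimes_k B$ with those used in the bimodule $Ae_i\otimes_k e_jB$ in order to verify the identification $F(Ae_i\otimes_k e_jB)\cong e_{(i,j)}(A^{op}\otimes_k B)$ cleanly.
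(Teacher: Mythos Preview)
Your approach is essentially the paper's: decompose $\operatorname{Res}_B(M\otimes_k e_jB)$ into copies of a basic projective using $k$-bases of the $M_i$, then transport the generator question across the equivalence of Proposition~\ref{prop.equiv}(1). One small slip to fix: $\operatorname{Res}_B$ lands in ${\sf Bimod}(K-B)$, not ${\sf Gr}B$, so your decomposition should read $\bigoplus_{i\in I}\bigoplus_{\alpha\in\mathcal{B}_i} Ke_i\otimes_k e_jB$ and what you need is that each $Ke_i\otimes_k e_jB$ is projective in ${\sf Bimod}(K-B)$ (the paper checks this via $\operatorname{Hom}_{{\sf Bimod}(K-B)}(Ke_i\otimes_k e_jB,N)\cong N_{ij}$); once that is said, your argument and the paper's coincide.
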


\begin{proof}
If we let ${}_{i}M$ denote the left $K$-module with
$$
{}_{i}M_{j}=\begin{cases} M_{i} & \mbox{if }j=i \\ 0 & \mbox{otherwise,}
\end{cases}
$$
then there is a canonical $K-B$-bimodule isomorphism
$$
M\otimes_{k}e_{j}B \rightarrow \underset{i}{\bigoplus}  { }_{i}M \otimes_{k} e_{j}B.
$$
Therefore, to prove the first result, it suffices to show $Ke_{i} \otimes_{k} e_{j}B$ is a projective $K-B$-bimodule.  To prove this, suppose $N$ is a $K-B$-bimodule.  Then the map
$$
\operatorname{Hom}_{{\sf Bimod }(K-B)}(Ke_{i} \otimes_{k} e_{j}B, N) \rightarrow \operatorname{Hom}_{{\sf Gr }B}(e_{j}B,e_{i}N) \cong N_{ij}
$$
with first composite sending $f$ to $f(e_{i}\otimes -)$ is an isomorphism, establishing the first result.

To establish the second result, we first show that $\{Ae_{i} \otimes_{k} e_{j}B\}_{i,j \in I}$ is a set of generators for ${\sf Bimod }(A-B)$.  Let $e_j'$ denote the identity in $B_{jj}$ to avoid the potential confusion.  Since $e_i^{op}\otimes e_j'$ is the identity in $(A^{op}\otimes _kB)_{(i, j)(i, j)}=A_{ii}^{op}\otimes _kB_{jj}$, and since $Ae_{i} \otimes_{k} e_{j}'B$ is sent to $(e_i^{op}\otimes e_j')(A^{op}\otimes _kB)$ under the equivalence functor
$$
{\sf Bimod} (A-B)\to {\sf Gr}A^{op}\otimes _kB
$$
from Proposition \ref{prop.equiv}, we conclude that $\{Ae_{i} \otimes_{k} e_{j}'B\}_{i,j \in I}$ is a set of generators for ${\sf Bimod }(A-B)$.

The second part will now follow from the first part if we can show that $Ae_{i} \otimes_{k} e_{j}B$ is a projective $A-B$-bimodule. This follows since the map
$$
\operatorname{Hom}_{{\sf Bimod }(A-B)}(Ae_{i} \otimes_{k} e_{j}B,N) \rightarrow N_{ij}
$$
defined by sending $f$ to $f_{ij}(e_{i}\otimes e_{j})$ is an isomorphism, as the reader can check.
\end{proof}

\section{Connected $\mathbb{Z}$-algebras} \label{section.basics}
We begin this section by introducing some terminology.  Following \cite[Section 2]{quadrics}, we say that a $\mathbb{Z}$-algebra $A$ is {\it positively graded} if $A_{ij}=0$ for all $i>j$ and that $A$ is {\it negatively graded} if $A_{ij}=0$ for all $i<j$. For $n$ a nonnegative integer, we let $A_{\geq n}$ denote the subobject of $A$ in ${\sf Bimod }(A-A)$ given by $\bigoplus_{j-i \geq n} A_{ij}$.

The terminology ``Ext-finite" in the following definition is due to \cite[Section 4]{dualizing}.
\begin{definition} \label{def.3.1}
Let $A$ be a positively graded $\mathbb{Z}$-algebra.  Then
\begin{itemize}
\item{} $A$ is {\it connected} if for all $i$, $A_{ii}$ is a division ring over $k$,

\item{} if $M$ is in ${\sf Gr }A$ and $A$ is connected, an exact sequence
$$
\cdots F_{2} \overset{\psi_{2}}{\rightarrow} F_{1} \overset{\psi_{1}}{\rightarrow} F_{0} \rightarrow M \rightarrow 0
$$
is a {\it minimal free resolution of $M$} if for all $i \geq 1$, $\operatorname{im }\psi_{i} \subset F_{i-1}A_{\geq 1}$, and $F_{i-1}$ free,

\item{} $A$ is {\it right Ext-finite} if $A$ is connected, and for all $i$, $e_{i}A/(e_{i}A)_{\geq i+1}$ has a minimal free resolution, each of whose terms is free and finitely generated,

\item{} $A$ is {\it left Ext-finite} if $A$ is connected, and for all $i$, $Ae_{i}/{(Ae_{i})}_{\leq i-1}$ has a minimal free resolution, each of whose terms is a finite direct sum of modules of the form $Ae_{l}$ for various $l$, and

\item{} $A$ is {\it Ext-finite} if $A$ is both right Ext-finite and left Ext-finite.
\end{itemize}
\end{definition}

We note that if $A$ is positively graded, then $A^{op}$ is negatively graded, and $\widetilde{A^{op}}$ is positively graded.  Furthermore, if $A$ is connected, then so is $\widetilde{A^{op}}$.

For the remainder of the paper, we let $\tilde{e}_{i}$ denote the unit in $(\widetilde{A^{op}})_{ii}$.
\begin{lemma} \label{lemma.extagain}
If $A$ is left (right) Ext-finite, then $\widetilde{A^{op}}$ is right (left) Ext-finite.
\end{lemma}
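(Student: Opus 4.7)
The plan is to transport minimal free resolutions across the equivalence $F: {\sf Gr }A \to \widetilde{A^{op}}-{\sf Gr}$ constructed in the proof of Proposition \ref{prop.equiv}, together with its mirror $G: A-{\sf Gr}\to {\sf Gr }\widetilde{A^{op}}$ defined by the same formula with the roles of left and right swapped. Connectedness of $\widetilde{A^{op}}$ is immediate from the remark preceding the lemma, so only the resolution condition needs work.

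I would start with the right-to-left implication. Using $F(M)_k = M_{-k}$, I would first check by a direct index computation that $F$ carries the free right module $e_iA$ to the free left module $\widetilde{A^{op}}\tilde{e}_{-i}$ (both have $k$-th component $A_{i,-k}$), and that the graded submodule $(e_iA)_{\geq i+1}$ goes to $(\widetilde{A^{op}}\tilde{e}_{-i})_{\leq -i-1}$; hence
\[
F\bigl(e_iA/(e_iA)_{\geq i+1}\bigr)\cong \widetilde{A^{op}}\tilde{e}_{-i}\bigl/(\widetilde{A^{op}}\tilde{e}_{-i})_{\leq -i-1}.
\]
Next I would verify that $F$ preserves the minimality hypothesis on the differentials. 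A morphism $e_jA\to e_{j'}A$ corresponds to an element $a\in A_{j'j}$, and $F$ sends it to the morphism $\widetilde{A^{op}}\tilde{e}_{-j}\to \widetilde{A^{op}}\tilde{e}_{-j'}$ determined by the same $a$, now viewed in $(\widetilde{A^{op}})_{-j,-j'}$. The condition $a\in A_{\geq 1}$ (i.e.\ $j>j'$) coincides with $a\in (\widetilde{A^{op}})_{\geq 1}$, because $\widetilde{A^{op}}$ is positively graded in the same convention. Since $F$ is an equivalence it preserves exactness and finite direct sums, so applying it to the minimal free resolution of $e_iA/(e_iA)_{\geq i+1}$ guaranteed by right Ext-finiteness produces a minimal free resolution of $\widetilde{A^{op}}\tilde{e}_{-i}/(\widetilde{A^{op}}\tilde{e}_{-i})_{\leq -i-1}$ whose terms are finite direct sums of modules $\widetilde{A^{op}}\tilde{e}_{-j}$. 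Letting $s=-i$ range over $\mathbb{Z}$ gives the left Ext-finiteness of $\widetilde{A^{op}}$.

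The other implication is entirely parallel: the mirror functor $G$ (which is an equivalence by the same argument used for $F$ in Proposition \ref{prop.equiv}) sends $Ae_i\mapsto \tilde{e}_{-i}\widetilde{A^{op}}$ and $(Ae_i)_{\leq i-1}\mapsto (\tilde{e}_{-i}\widetilde{A^{op}})_{\geq -i+1}$, and preserves minimality and finite direct sums by the same element-level computation, so a minimal free resolution witnessing left Ext-finiteness of $A$ transports to one witnessing right Ext-finiteness of $\widetilde{A^{op}}$.

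The only real obstacle is index bookkeeping: checking that the sign flip $k\mapsto -k$ simultaneously converts $\geq i+1$ into $\leq -i-1$ on submodules while preserving the positive-degree condition $A_{\geq 1}\leftrightarrow (\widetilde{A^{op}})_{\geq 1}$ used to express minimality. Once this symmetry is recorded, nothing substantive remains.
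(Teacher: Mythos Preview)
Your proposal is correct and follows essentially the same approach as the paper: transport the minimal free resolutions through the equivalence of Proposition~\ref{prop.equiv}, noting that the free modules $e_iA$ (resp.\ $Ae_i$) correspond to $\widetilde{A^{op}}\tilde{e}_{-i}$ (resp.\ $\tilde{e}_{-i}\widetilde{A^{op}}$) and that the relevant truncations match under the index flip $k\mapsto -k$. The paper's own proof is simply a terser version of yours---it records the correspondences $\tilde{e}_{i}\widetilde{A^{op}}\leftrightarrow Ae_{-i}$ and $(\tilde{e}_{i}\widetilde{A^{op}})_{\geq i+1}\leftrightarrow (Ae_{-i})_{\leq -i-1}$ and leaves the preservation of minimality and finite freeness implicit, whereas you spell these out.
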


\begin{proof}
Suppose $A$ is left Ext-finite.  Under the equivalence in the last part of Proposition \ref{prop.equiv}, $\tilde{e}_{i}\widetilde{A^{op}}$ goes to $Ae_{-i}$, while $(\tilde{e}_{i}\widetilde{A^{op}})_{\geq i+1}$ goes to $(Ae_{-i})_{\leq -i-1}$.  Therefore, $\widetilde{A^{op}}$ is right Ext-finite.  The proof of the other assertion is similar.
\end{proof}

\begin{remark} \label{remark.ext} If $A$ is Ext-finite, it follows that $A_{ij}$ is finite dimensional over both $A_{ii}$ and $A_{jj}$.
\end{remark}

\begin{lemma} \label{lemma.resshape}
Suppose $A$ is right Ext-finite.  Then for all $i \in \mathbb{Z}$ and all $n \geq 1$, $e_{i}(A/A_{\geq n})$ has a resolution, each of whose terms is free and finitely generated.
\end{lemma}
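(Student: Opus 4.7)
The plan is to induct on $n$. When $n=1$, $e_i(A/A_{\geq 1})$ coincides with $e_iA/(e_iA)_{\geq i+1}$, which admits a finitely generated free resolution by the definition of right Ext-finite.

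For the inductive step, I would use the short exact sequence in ${\sf Gr }A$
$$
0 \to e_i(A_{\geq n}/A_{\geq n+1}) \to e_i(A/A_{\geq n+1}) \to e_i(A/A_{\geq n}) \to 0.
$$
The right-hand term has a finitely generated free resolution by induction, so by the horseshoe lemma it suffices to produce one for the left-hand term. That term is concentrated in degree $i+n$, where it equals $A_{i,i+n}$, and its right $A$-action factors through the projection $A \to A/A_{\geq 1}$; consequently, as a right $A$-module, it is isomorphic to $S_{i+n}^{\oplus d}$, where $S_j := e_jA/(e_jA)_{\geq j+1}$ and $d := \dim_{A_{i+n,i+n}} A_{i,i+n}$. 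Since $S_{i+n}$ has a finitely generated free resolution by right Ext-finiteness, the proof is complete once $d < \infty$ is verified.

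I expect the main obstacle to be this finite-dimensionality claim---namely that $A_{ij}$ is finite-dimensional over $A_{jj}$ whenever $i \leq j$, using only right Ext-finiteness (this is slightly stronger than a direct consequence of Remark \ref{remark.ext}, which as stated assumes two-sided Ext-finiteness). I would prove it by strong induction on $m := j - i$, with the trivial case $m=0$. For $m \geq 1$, take the minimal free resolution $\cdots \to F_1 \overset{\psi_1}{\to} F_0 \to S_i \to 0$ furnished by the hypothesis. A Nakayama-type argument using the minimality condition $\operatorname{im}\psi_k \subset F_{k-1}A_{\geq 1}$ shows that $F_0 = e_iA$ and that the generators of $F_1$ lie in degrees $\geq i+1$. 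Hence $(F_1)_j$ is a finite direct sum of $A_{jj}$-modules of the form $A_{l,j}$ with $i+1 \leq l \leq j$, each finite-dimensional over $A_{jj}$ by the strong inductive hypothesis (or trivially when $l=j$). Since $(S_i)_j = 0$ for $m \geq 1$, exactness realizes $A_{i,j} = (F_0)_j$ as a quotient of the finite-dimensional $(F_1)_j$, completing the induction.
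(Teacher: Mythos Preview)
Your proof is correct and follows essentially the same route as the paper: induction on $n$, the short exact sequence
\[
0 \to e_i(A_{\geq n}/A_{\geq n+1}) \to e_i(A/A_{\geq n+1}) \to e_i(A/A_{\geq n}) \to 0,
\]
the identification of the left-hand term with copies of $e_{i+n}(A/A_{\geq 1})$ indexed by $A_{i,i+n}$, and the horseshoe lemma. The one point where you go beyond the paper is the finite-dimensionality of $A_{i,i+n}$ over $A_{i+n,i+n}$: the paper simply cites Remark~\ref{remark.ext}, whose stated hypothesis is two-sided Ext-finiteness, whereas the lemma assumes only right Ext-finiteness. Your strong-induction argument on $j-i$ (using minimality of the resolution of $S_i$ to force $F_0=e_iA$ and the generators of $F_1$ into degrees $\geq i+1$, then reading off degree $j$) correctly establishes this under the one-sided hypothesis alone, so your write-up actually patches a small imprecision in the paper's citation.
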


\begin{proof}
The conclusion is true when $n=1$ by definition of right Ext-finiteness.  Suppose the result holds for some $n \geq 1$.  Since there is a canonical exact sequence in ${\sf Gr }A$
$$
0 \rightarrow e_{i}(A_{\geq n}/A_{\geq n+1}) \rightarrow e_{i}(A/A_{\geq n+1}) \rightarrow e_{i}(A/A_{\geq n}) \rightarrow 0
$$
the result follows from \cite[Lemma 2.2.8]{weibel}, from the fact that
$$
e_{i}(A_{\geq n}/A_{\geq n+1}) \cong A_{i,i+n} \otimes_{A_{i+n,i+n}} e_{i+n}(A/A_{\geq 1}).
$$
and from Remark \ref{remark.ext}.
\end{proof}

Suppose $A$ is a $\mathbb{Z}$-algebra.  A graded right $A$-module $M$ is {\it right bounded} if $M_{n}=0$ for all $n>>0$.  We let ${\sf Tors }A$ denote the full subcategory of ${\sf Gr }A$ consisting of modules whose elements $m$ have the property that the right $A$-module generated by $m$ is right bounded.

\begin{lemma} \label{lemma.serresub}
If $A$ is a right Ext-finite, connected $\mathbb{Z}$-algebra, then ${\sf Tors }A$ is a Serre subcategory of ${\sf Gr A}$, and there is a torsion functor $\tau: {\sf Gr }A \rightarrow {\sf Tors }A$ which sends a module to its largest torsion submodule.
\end{lemma}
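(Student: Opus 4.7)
The plan is to define $\tau M := \{m \in M : mA \text{ is right bounded}\}$ and to verify that this subset is an $A$-submodule containing every other submodule of $M$ that lies in ${\sf Tors }A$, and that $M \mapsto \tau M$ is functorial. Closure of $\tau M$ under the $A$-action follows from $(ma)A \subseteq mA$, and closure under addition of homogeneous elements of the same degree follows from the observation that two right-bounded cyclic submodules admit a common bound on degree; functoriality then uses $f(m)A = f(mA)$. Closure of ${\sf Tors }A$ under subobjects and quotients in ${\sf Gr }A$ is essentially immediate from the definition, since any element of a subobject (respectively, any preimage of an element in a quotient) generates a cyclic submodule that is contained in (respectively, a homomorphic image of) a right-bounded one.

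The substantive content is closure under extensions, which is where right Ext-finiteness enters. I would first extract the auxiliary fact that for every $i \in \mathbb{Z}$ and every $n \geq 1$ the right $A$-module $(e_iA)_{\geq i+n}$ is finitely generated. This should follow from Lemma \ref{lemma.resshape}, which supplies a finitely generated free resolution of $e_i(A/A_{\geq n}) = e_iA/(e_iA)_{\geq i+n}$: since $e_iA$ is itself finitely generated projective and the kernel of the canonical surjection $e_iA \twoheadrightarrow e_i(A/A_{\geq n})$ is $(e_iA)_{\geq i+n}$, Schanuel's lemma yields finite generation of this kernel.

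With this in hand, the extension argument would proceed as follows. Given a short exact sequence $0 \to M' \to M \to M'' \to 0$ with torsion ends and a homogeneous $m \in M_i$, torsion of $M''$ produces $N$ with $mA_{i,j} \subseteq M'$ for all $j \geq i+N$. Choosing finite generators $a_1,\ldots,a_r$ of $(e_iA)_{\geq i+N}$ with $a_p \in A_{i,\,i+n_p}$, each $ma_p$ lies in $M'$ and hence generates a right-bounded submodule; the finite collection $\{ma_p\}$ then admits a common bound $L$ such that $(ma_p)A_{i+n_p,\,j} = 0$ for all $p$ and all $j \geq i+n_p+L$. For $j$ above the resulting threshold, writing any $a \in A_{i,j}$ as $\sum_p a_p b_p$ with $b_p \in A_{i+n_p,\,j}$ (possible by the choice of generators) forces $ma = 0$, so $mA$ is right bounded; the inhomogeneous case reduces to this one by summing finitely many homogeneous components. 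The hard part is precisely this uniform-bound step: without finite generation of $(e_iA)_{\geq i+N}$, one could only bound $(ma)A$ for each $a$ individually, with no way to collate the bounds across $A_{i,j}$, so Ext-finiteness is doing real work here.
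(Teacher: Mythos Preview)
Your proposal is correct and follows essentially the same strategy as the paper: closure under subobjects and quotients is immediate, and closure under extensions is obtained by using finite generation to collate the individual torsion bounds into a uniform one. The only structural difference is in how the finite-generation input is packaged. You first establish that $(e_iA)_{\geq i+N}$ is finitely generated (via Lemma~\ref{lemma.resshape} and Schanuel) and then argue in a single step. The paper instead uses only that $(e_jA)_{\geq j+1}$ is finitely generated---which is immediate from the definition of right Ext-finiteness, since the minimal free resolution of $e_jA_0$ necessarily begins with $e_jA$ and has finitely generated $F_1$ surjecting onto the kernel $(e_jA)_{\geq j+1}$---and then runs an induction on $l-j$, where $l$ is the least integer with $n\cdot A_{jl'}\subset M'$ for all $l'\geq l$. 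Your auxiliary fact is exactly what the paper's induction establishes implicitly, so the two arguments are equivalent; your route trades the induction for a slightly heavier appeal to Lemma~\ref{lemma.resshape} and Schanuel, while the paper's route keeps the prerequisites minimal at the cost of the inductive bookkeeping.
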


\begin{proof}
If $N \in {\sf Tors }A$, it is elementary to check that any submodule or quotient module of $N$ is also in ${\sf Tors }A$.  Conversely, suppose we have a short-exact sequence
$$
0 \rightarrow M \rightarrow N \rightarrow P \rightarrow 0
$$
with $M, P \in {\sf Tors }A$.  Let $n \in N_{j}$.  Then there exists an $l \geq j$ minimal such that $n \cdot A_{jl'} \subset M$ for all $l' \geq l$.  We prove the result by induction on $l-j$.  If $l=j$, then $n \in M$ so that $n$ is a torsion element.

Now we consider the general case.  Since $A$ is right Ext-finite, there are finitely many generators $a_{1}, \ldots, a_{m}$ of $(e_{j}A)_{\geq j+1}$.  By the induction hypothesis, $na_{i}$ is torsion for all $i$.  Thus, if $a_{i} \in A_{jq_{i}}$, then for sufficiently large $p$, $na_{i}A_{q_{i}p}=0$ for all $i$.  Therefore, for sufficiently large $p$, if $x \in A_{jp}$, then $n \cdot x =0$.  Therefore, $n$ is torsion.
\end{proof}


\subsection{Graded coherence}
We now review the basic facts about coherence from \cite{polish} which we will need in the sequel.  For the rest of Section \ref{section.basics}, we let $A$ denote a connected $\mathbb{Z}$-algebra.  We warn the reader that our grading convention and assumptions on $A$ differ slightly from those appearing in \cite{polish}.

We say that $M \in {\sf Gr }A$ is {\it finitely generated} if there is a surjection $F \rightarrow M$ where $F$ is free and finitely generated.  We say $M$ is {\it coherent} if it is finitely generated and if for every homomorphism $f: F \rightarrow M$ with $F$ free and finitely generated, $\operatorname{ker }f$ is finitely generated.  We denote the full subcategory of ${\sf Gr }A$ consisting of coherent modules by ${\sf coh }A$.  By \cite[Proposition 1.1]{polish}, ${\sf coh }A$ is an abelian subcategory of ${\sf Gr }A$ closed under extensions.

We call $A$ {\it right-coherent} if the right modules $e_{j}A$ and $e_{j}A/(e_{j}A)_{\geq j+1}$ are coherent.  In the case that $A$ is a connected $\mathbb{Z}$-algebra, finitely generated in degree one, then the condition that the modules $e_{j}A$ are coherent implies that $A$ is coherent.  The definition of left-coherence is similar.

The following is \cite[Lemma 1.2]{polish}.

\begin{lemma} \label{lemma.res}
If $A$ is right coherent, then every $M \in {\sf coh }A$ has a resolution
$$
\cdots \rightarrow F^{2} \rightarrow F^{1} \rightarrow F^{0} \rightarrow M \rightarrow 0
$$
with $F^{i}$ free and finitely generated.
\end{lemma}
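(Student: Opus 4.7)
The plan is to build the resolution one step at a time, letting $K^{-1} := M$ and $K^{i} := \ker(F^{i} \to F^{i-1})$ for $i \geq 0$, and verifying by induction that each $K^{i}$ lies in ${\sf coh }A$ (and in particular is finitely generated), so that a surjection $F^{i+1} \twoheadrightarrow K^{i}$ from a free and finitely generated $F^{i+1}$ exists at every stage.

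For the base case, the hypothesis that $M \in {\sf coh }A$ gives in particular that $M$ is finitely generated, so we may pick a surjection $F^{0} \twoheadrightarrow M$ with $F^{0}$ a finite direct sum $\bigoplus_{l} e_{j_{l}}A$, and set $K^{0} := \ker(F^{0} \to M)$.

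For the inductive step, suppose $F^{i}, \ldots, F^{0}$ have been constructed and that $K^{i-1}$ is coherent. The right-coherence of $A$ says that each $e_{j}A$ is coherent, and because ${\sf coh }A$ is an abelian subcategory of ${\sf Gr }A$ closed under extensions (by \cite[Proposition 1.1]{polish}), the finite free module $F^{i}$ is coherent. The map $F^{i} \to F^{i-1}$ has image $K^{i-1}$ by the inductive exactness, so it factors as a surjection $F^{i} \twoheadrightarrow K^{i-1}$ of coherent modules; hence its kernel $K^{i}$ again lies in ${\sf coh }A$. In particular $K^{i}$ is finitely generated, so we can choose a surjection $F^{i+1} \twoheadrightarrow K^{i}$ from a free and finitely generated $F^{i+1}$ and take the composite $F^{i+1} \to F^{i}$ as the next differential. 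Exactness at $F^{i}$ is built into the construction.

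The only real point of care is ensuring that the syzygy $K^{i}$ stays inside ${\sf coh }A$ at every step, and this is handled uniformly by the abelian-subcategory property of ${\sf coh }A$ together with the right-coherence of $A$, which guarantees that every free, finitely generated $A$-module is coherent. There is no substantial obstacle beyond this bookkeeping; the argument is the standard inductive construction of a free resolution in a coherent setting, adapted to the $\mathbb{Z}$-algebra grading.
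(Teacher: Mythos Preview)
Your argument is correct and is exactly the standard proof one would expect. The paper itself does not prove this lemma at all: it simply records that the result is \cite[Lemma 1.2]{polish}. So there is no ``paper's own proof'' to compare against beyond the citation, and your inductive syzygy construction---using that each $e_{j}A$ is coherent by right-coherence, that ${\sf coh }A$ is abelian and extension-closed so finite free modules are coherent, and hence that each kernel stays coherent---is precisely the intended content of that citation. One cosmetic point: in your inductive step you refer to ``the map $F^{i} \to F^{i-1}$'' even when $i=0$, where $F^{-1}$ has not been defined; it would be cleaner to phrase the induction uniformly in terms of the surjection $F^{i} \twoheadrightarrow K^{i-1}$ (with $K^{-1}=M$), which is what you actually use.
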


\section{Internal tensor functors and their left derived functors} \label{section.tensor}
In this section, we develop the basic properties of the tensor product of bimodules and their associated left-derived functors.

\subsection{Internal tensor.}  \label{section.tensordef} Suppose $I$ is a set and $A$, $B$ and $C$ are $I$-algebras.  If $M$ is a graded $A$-module and $N$ is an $A-B$-bimodule, we let
$$
M \intotimes N := \operatorname{cok} \bigl(\bigoplus_{l,m} M_l \otimes_{A_{l,l}} A_{l,m} \otimes_{A_{m,m}} e_{m}N \xrightarrow{\mu \otimes 1 - 1 \otimes \mu} \bigoplus_n M_n \otimes_{A_{n,n}} e_{n}N \bigr).
$$
It is right exact in each component being defined by cokernels.

Now, suppose $M$ is an $A-B$-bimodule and $N$ is a $B-C$-bimodule.  We let $M \intotimesb N$ denote the $A-C$-bimodule defined by
$$
(M \intotimesb N)_{ij} := (e_{i}M \intotimesb N)_{j}
$$
with right-structure that of the right $C$-module $\bigoplus_{i}(e_{i}M \intotimesb N)$ and with left multiplication by $A$ defined in the obvious way.  It is also right-exact in each component, as one can check.  In the sequel we will use, without comment, the fact that multiplication induces an isomorphism $e_{i}B \intotimesb N \cong e_{i}N$ and an isomorphism $M \intotimesb B \cong M$.

\begin{remark} \label{remark.tensor} Let $N$ be a $B-C$-bimodule such that, for all $h \in I$, $Ne_{h}$ is a projective left $C$-module.  Then, as usual, the functor $-\intotimesb N:{\sf Bimod }(A-B) \rightarrow {\sf Bimod }(A-C)$ is exact.  A similar assertion holds replacing the first input with the second.
\end{remark}


The proof of the following result is elementary.
\begin{lemma} \label{lemma.restensor}
Let $M$ be an $A-B$-bimodule and let $N$ be a $B-C$-bimodule.  Then $M \intotimesb {}_{B}\operatorname{Res }N = {}_{A}\operatorname{Res }(M \intotimesb N)$.
\end{lemma}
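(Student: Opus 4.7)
The plan is to verify the claimed equality by directly unpacking the definition of the internal tensor product $\intotimesb$ from Section \ref{section.tensordef} and observing that both sides are constructed from identical underlying data. Both ${}_A\operatorname{Res}(M \intotimesb N)$ and $M \intotimesb {}_B\operatorname{Res}(N)$ are $A-K$-bimodules, and the conceptual reason the equality should hold is that ${}_B\operatorname{Res}$ only modifies the right $C$-structure of $N$, while $\intotimesb$ uses only the left $B$-structure of $N$ in its definition.

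First I would observe that, by definition of ${}_B\operatorname{Res}$, the $B-K$-bimodule ${}_B\operatorname{Res}(N)$ has exactly the same underlying graded abelian groups and the same left $B$-action as $N$; only the right $C$-action has been replaced by its restriction to the (scalar) $K$-action in each graded piece. Next, for fixed $i, j \in I$, unpacking the definition gives that $(M \intotimesb {}_B\operatorname{Res}(N))_{ij} = (e_i M \intotimesb {}_B\operatorname{Res}(N))_j$ is the cokernel of
\[
\bigoplus_{l,m} M_{il} \otimes_{B_{ll}} B_{lm} \otimes_{B_{mm}} N_{mj} \longrightarrow \bigoplus_n M_{in} \otimes_{B_{nn}} N_{nj},
\]
with the map $\mu \otimes 1 - 1 \otimes \mu$. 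Since ${}_A\operatorname{Res}(M \intotimesb N)$ has the same underlying graded abelian groups as $M \intotimesb N$, its $(i,j)$-component $(e_iM \intotimesb N)_j$ is given by the identical cokernel. So the underlying graded abelian groups of the two sides coincide on the nose.

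It remains to verify that the left $A$- and right $K$-actions match. The left $A$-action on each side is constructed from the left $A$-action on $M$ via the identical prescription described in Section \ref{section.tensordef}. The right $K$-action on $M \intotimesb {}_B\operatorname{Res}(N)$ is the scalar action inherited from ${}_B\operatorname{Res}(N)$, while the right $K$-action on ${}_A\operatorname{Res}(M \intotimesb N)$ is the restriction of the right $C$-action on $M \intotimesb N$ to its diagonal, scalar part. These coincide. There is no serious obstacle here; the lemma is an unpacking of definitions, in line with the author's remark that the proof is elementary, and the main care needed is simply keeping the two kinds of index data (bimodule degree versus tensor-product grading) straight.
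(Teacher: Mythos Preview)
Your proof is correct and is precisely the elementary verification the paper alludes to (the paper omits the proof entirely, simply declaring it elementary). Unpacking the cokernel definition of $\intotimesb$ and observing that the right $C$-structure of $N$ plays no role in forming the cokernel, only in endowing the result with its right module structure, is exactly the point.
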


For the remainder of Section \ref{section.tensor}, we assume that $A$, $B$ and $C$ are connected $\mathbb{Z}$-algebras.  We define $A_{0}$ as the cokernel of the canonical inclusion $A_{\geq 1} \rightarrow A$.  We say $M \in {\sf Gr }A$ is {\it left-bounded by $d$} if $M_{i}=0$ for $i<d$.  We say $N \in A-{\sf Gr}$ is {\it right-bounded by $d$} if $N_{i}=0$ for all $i>d$.

The next result is Nakayama's Lemma.

\begin{lemma} \label{lemma.nak}
Suppose $M$ is left-bounded.  Then $M \intotimes A_{0}=0$ implies that $M=0$.  Therefore, if $N$ is left-bounded and $f:M \rightarrow N$ is a morphism with $f \intotimes A_{0}$ a surjection, then $f$ is a surjection.
\end{lemma}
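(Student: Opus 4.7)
The plan is to reduce the statement to the familiar classical argument by first identifying $M \intotimes A_0$ with a well-understood quotient of $M$, namely $M$ modulo its ``image of positive-degree multiplication.''

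\medskip

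\noindent\textbf{Step 1: Compute $M \intotimes A_0$ in each degree.}  Since $A$ is connected and positively graded, $(A_0)_{ij} = A_{ii}$ when $i=j$ and $0$ otherwise.  Plugging into the defining cokernel formula, the degree-$n$ part of $\bigoplus_p M_p \otimes_{A_{pp}} e_p A_0$ collapses to $M_n \otimes_{A_{nn}} A_{nn} \cong M_n$, and the relations coming from $M_l \otimes_{A_{ll}} A_{lm} \otimes_{A_{mm}} e_m A_0$ in degree $n$ are only nonzero when $m=n$.  For $l=n$ they give the usual $A_{nn}$-balanced tensor relations (already built into $M_n \otimes_{A_{nn}} A_{nn}$), and for $l<n$ they force the image of $M_l \cdot A_{ln} \subset M_n$ to vanish (because the $1\otimes \mu$ side lands in $M_l \otimes (A_0)_{ln} = 0$).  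Hence
$$
(M \intotimes A_0)_n \;\cong\; M_n \bigl/ \textstyle\sum_{l<n} M_l \cdot A_{ln}.
$$

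\medskip

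\noindent\textbf{Step 2: Prove the first statement by minimal degree.}  Assume $M \intotimes A_0 = 0$ and $M$ is left-bounded, so there is an integer $d$ with $M_i = 0$ for all $i<d$.  Suppose for contradiction $M \neq 0$, and let $n_0$ be the least integer with $M_{n_0} \neq 0$; then $n_0 \geq d$ and $M_l = 0$ for all $l < n_0$.  By Step 1,
$$
0 \;=\; (M \intotimes A_0)_{n_0} \;=\; M_{n_0} \bigl/ \textstyle\sum_{l<n_0} M_l \cdot A_{l,n_0} \;=\; M_{n_0}/0 \;=\; M_{n_0},
$$
contradicting the choice of $n_0$.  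Therefore $M = 0$.

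\medskip

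\noindent\textbf{Step 3: Deduce the second statement from the first.}  Let $C = \operatorname{cok} f$.  Since $N$ is left-bounded and $C$ is a quotient of $N$, $C$ is left-bounded.  Right-exactness of $- \intotimes A_0$ in the first argument (noted at the start of Section \ref{section.tensordef}) applied to $M \xrightarrow{f} N \to C \to 0$ yields an exact sequence
$$
M \intotimes A_0 \;\xrightarrow{f \intotimes A_0}\; N \intotimes A_0 \;\longrightarrow\; C \intotimes A_0 \;\longrightarrow\; 0,
$$
and surjectivity of $f \intotimes A_0$ forces $C \intotimes A_0 = 0$.  Applying the first part to $C$ gives $C = 0$, i.e.\ $f$ is surjective.

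\medskip

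The only nonroutine step is Step 1; once the explicit description of $(M\intotimes A_0)_n$ is in hand, the rest is the standard graded Nakayama argument.  I expect the careful bookkeeping of the $\mu \otimes 1 - 1 \otimes \mu$ relations, and in particular the verification that for $l<n$ these relations kill precisely $M_l \cdot A_{ln}$ in the ``diagonal'' summand, to be the only point requiring attention.
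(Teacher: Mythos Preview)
Your proof is correct and follows essentially the same approach as the paper. The only presentational difference is that for the first statement the paper obtains the identification $(M\intotimes A_0)_n \cong M_n/(MA_{\geq 1})_n$ by applying right-exactness of $M\intotimes -$ to the sequence $A_{\geq 1}\to A\to A_0\to 0$ rather than computing directly from the cokernel definition as you do in Step~1; and for the second statement the paper passes through $\operatorname{im} f$ while you work directly with $\operatorname{cok} f$, which is in fact slightly cleaner.
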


\begin{proof}
Suppose $M$ is left-bounded by $d$ and $M_{d} \neq 0$.  By right exactness of $M\intotimes - $, we have an exact sequence
$$
M \intotimes A_{\geq 1} \rightarrow M \intotimes A \cong M \rightarrow M \intotimes A_{0} \rightarrow 0.
$$
Since the image of the left map is contained in degree greater than $d$, the hypothesis implies that $M_{d}=0$, a contradiction. Thus, $M=0$.

To prove the second part, we apply $ -\intotimes A_{0}$ to
$$
0 \rightarrow \operatorname{im} f \rightarrow N \rightarrow \operatorname{cok }f \rightarrow 0
$$
to obtain an exact sequence
$$
(\operatorname{im }f) \intotimes A_{0} \rightarrow N \intotimes A_{0} \rightarrow \operatorname{cok }f \intotimes A_{0} \rightarrow 0,
$$
and we have $(\operatorname{im}f) \intotimes A_{0} \cong \operatorname{im}(f \intotimes A_{0}) = N \intotimes A_{0}$ by hypothesis so that
$$
\operatorname{cok }f \intotimes A_{0} = 0.
$$
Since $N$ left-bounded, the first part implies that $\operatorname{cok }f=0$ as desired.
\end{proof}

\begin{proposition} \label{prop.min}
Let $M$ be left-bounded by $d$.  Then there exists a multiset of integers $J$ and a surjection of the form
$$
\psi: F(:= \bigoplus_{j \in J}e_{j}A) \rightarrow M
$$
such that $\operatorname{ker }\psi \subset FA_{\geq 1}$ and $j \in J$ implies that $j \geq d$.  It follows that $M$ has a minimal free resolution.
\end{proposition}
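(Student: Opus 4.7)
The plan is to build $\psi$ by lifting a basis of the ``top'' $M \intotimes A_0$ of $M$, then appeal to Nakayama (Lemma \ref{lemma.nak}). The main technical ingredient is understanding $M \intotimes A_0$ explicitly. Since $A$ is connected, $A_0$ is concentrated on the diagonal with $(A_0)_{ii}=A_{ii}$, a division ring. Working through the cokernel presentation of $M \intotimes A_0$ termwise, the only surviving pieces come from the off-diagonal indices $l<m$ (the diagonal tensor relations vanish automatically, and indices $l>m$ vanish by positivity), yielding
$$
(M \intotimes A_0)_p \;\cong\; M_p \big/ (MA_{\geq 1})_p, \qquad (MA_{\geq 1})_p := \sum_{l<p} \operatorname{im}\bigl(M_l \otimes_{A_{ll}} A_{lp} \to M_p\bigr),
$$
which is a right $A_{pp}$-vector space. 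In particular, since $M$ is left-bounded by $d$, so is $M \intotimes A_0$. The same calculation applied to $M = e_j A$ shows $e_j A \intotimes A_0$ is concentrated in degree $j$ with value $A_{jj}$.

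Next I would build $F$ and $\psi$. For each $p \geq d$ choose a right $A_{pp}$-basis $\{\overline{m}_{p,\alpha}\}_{\alpha \in J_p}$ of $(M \intotimes A_0)_p$ and lift each $\overline m_{p,\alpha}$ to $m_{p,\alpha} \in M_p$. Let $J$ be the multiset containing the integer $p$ with multiplicity $|J_p|$, set
$$
F \;:=\; \bigoplus_{j\in J} e_j A \;=\; \bigoplus_{p \geq d}\bigoplus_{\alpha \in J_p} e_p A,
$$
and define $\psi:F\ra M$ on the $(p,\alpha)$-summand by $e_p a \mapsto m_{p,\alpha} a$. Every summand $e_p A$ is left-bounded by $p \geq d$, so $F$ is left-bounded by $d$. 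By the computation of $e_j A \intotimes A_0$ above, $(F \intotimes A_0)_p = \bigoplus_{\alpha \in J_p} A_{pp}$, and by construction $\psi \intotimes A_0$ sends the canonical basis of this space bijectively to the chosen basis of $(M \intotimes A_0)_p$. Hence $\psi \intotimes A_0$ is an isomorphism, and Lemma \ref{lemma.nak} forces $\psi$ to be surjective.

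To verify $\ker\psi \subset FA_{\geq 1}$, I would apply the right exact functor $- \intotimes A_0$ to the short exact sequence $0 \to \ker\psi \to F \to M \to 0$ to obtain an exact sequence
$$
\ker\psi \intotimes A_0 \;\longrightarrow\; F \intotimes A_0 \;\longrightarrow\; M \intotimes A_0 \;\longrightarrow\; 0.
$$
The second map is an isomorphism, so the first is zero; since that first map is induced by the inclusion $\ker\psi \hookrightarrow F$ and factors as $\ker\psi \to F \to F/FA_{\geq 1}$, every element of $\ker\psi$ lies in $FA_{\geq 1}$. Finally, to obtain a minimal free resolution, iterate: $\ker\psi \subset FA_{\geq 1}$ together with $F$ left-bounded by $d$ forces $\ker\psi$ to be left-bounded by $d+1$, so the construction reapplies to $\ker\psi$ and yields $F_1 \twoheadrightarrow \ker\psi$ with $\operatorname{im}(F_1 \to F_0) \subset F_0 A_{\geq 1}$; iterating produces the desired minimal free resolution.

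The main obstacle is the first step — carefully identifying $M \intotimes A_0$ from its defining cokernel presentation, verifying that the diagonal contribution to $\mu \otimes 1 - 1 \otimes \mu$ cancels (using the tensor relation over $A_{mm}$) while the off-diagonal contribution captures exactly $MA_{\geq 1}$. Once this identification and the fact that each $(M \intotimes A_0)_p$ is an $A_{pp}$-vector space are in hand, the rest is the classical Nakayama/basis-lifting argument, now internalized in the $\mathbb{Z}$-algebra setting.
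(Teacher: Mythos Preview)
Your proof is correct and follows essentially the same route as the paper: both choose lifts of an $A_{pp}$-basis of $M_p/(MA_{\geq 1})_p$ to define $\psi$, invoke Nakayama (Lemma \ref{lemma.nak}) for surjectivity, and iterate on the left-bounded kernel. The only cosmetic differences are that the paper phrases the basis-lifting as choosing a complementary subspace $U$ with $M = U \oplus MA_{\geq 1}$ and dispatches $\ker\psi \subset FA_{\geq 1}$ with ``by construction'' rather than your exact-sequence argument, and it does not spell out the identification $M \intotimes A_0 \cong M/MA_{\geq 1}$ as explicitly as you do.
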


\begin{proof}
The second result follows from the first since $\operatorname{ker }\psi$ is left bounded.  We now prove the first result.  Let $U$ be a graded $k$-subspace of $M$ defined as follows: for $j<d$, $U_{j}=0$, $U_{d}=M_{d}$, and for $j>0$, we let $U_{d+j}$ be an $A_{d+j, d+j}$ complementary subspace of $(MA_{\geq 1})_{d+j}$.  By construction, we have $M=U \oplus MA_{\geq 1}$, and there exists a canonical map, $\phi$, from a module of the form $\bigoplus_{j \in J}e_{j}A$ to $M$ sending local units in degree $j$ bijectively to a right $A_{jj}$-basis of $U_{j}$.  By Lemma \ref{lemma.nak}, $\phi$ is a surjection, and by construction, $\operatorname{ker }\phi$ is contained in $FA_{\geq 1}$ as desired.
\end{proof}
Now the usual proof, using Lemma \ref{lemma.nak}, establishes the following result.
\begin{corollary} \label{cor.min}
If $M$ is left-bounded and $\operatorname{pd }M=n$, then every minimal free resolution of $M$ has length $n$.
\end{corollary}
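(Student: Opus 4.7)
The plan is to follow the standard graded proof: use Lemma \ref{lemma.nak} to convert the vanishing of syzygies into the vanishing of their tensor product with $A_0$. Write the given minimal free resolution as $\cdots \to F_1 \xrightarrow{\psi_1} F_0 \to M \to 0$ and set $\Omega^{i+1} M := \operatorname{ker}\psi_i$ (with $\Omega^0 M = M$); by minimality $\Omega^{i+1} M \subset F_i A_{\geq 1}$.

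My first step would be to verify inductively that each $F_i$ and each $\Omega^{i+1} M$ is left-bounded. Applying $-\intotimes A_0$ to the surjection $F_0 \twoheadrightarrow M$, whose kernel lies in $F_0 A_{\geq 1}$, yields $F_0 \intotimes A_0 \cong M \intotimes A_0$. For a free module $F = \bigoplus_{j \in J} e_j A$ one has $F \intotimes A_0 \cong F / F A_{\geq 1}$, which is left-bounded if and only if $J$ is bounded below, equivalently if and only if $F$ itself is left-bounded. Since $M$ is left-bounded, so is $F_0$, and hence so is $\Omega^1 M \subset F_0 A_{\geq 1}$; the inductive step repeats the argument with $\Omega^{i+1} M$ in place of $M$.

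Next, I would use that $\operatorname{pd} M = n$ implies the $n$-th syzygy $\Omega^n M$ is projective. Consequently the short exact sequence
$$
0 \to \Omega^{n+1} M \to F_n \to \Omega^n M \to 0
$$
splits, giving $F_n \intotimes A_0 \cong \Omega^n M \intotimes A_0 \oplus \Omega^{n+1} M \intotimes A_0$. On the other hand, since $\Omega^{n+1} M \subset F_n A_{\geq 1}$, applying $-\intotimes A_0$ directly to the surjection $F_n \twoheadrightarrow \Omega^n M$ yields $F_n \intotimes A_0 \cong \Omega^n M \intotimes A_0$. Comparing, $\Omega^{n+1} M \intotimes A_0 = 0$, and Lemma \ref{lemma.nak} together with the first step force $\Omega^{n+1} M = 0$. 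Hence $\psi_{n+1}$ is the zero map, so by exactness and minimality $F_{n+1} = \operatorname{im}\psi_{n+2} \subset F_{n+1} A_{\geq 1}$, yielding $F_{n+1} = 0$ by another application of Lemma \ref{lemma.nak}. Iterating, $F_i = 0$ for all $i > n$; and the reverse inequality length $\geq n$ is immediate, since the resolution is in particular projective and $\operatorname{pd} M = n$.

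The main obstacle is really only the bookkeeping in the first step: left-boundedness must be propagated through all the syzygies so that Lemma \ref{lemma.nak} can be invoked at each stage. Once that is in place, the rest is the standard Nakayama-plus-splitting argument familiar from the $\mathbb{Z}$-graded setting.
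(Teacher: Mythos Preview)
Your proof is correct and is precisely the ``usual proof, using Lemma \ref{lemma.nak}'' that the paper invokes without detail; in particular, your care in propagating left-boundedness through the syzygies so that Nakayama applies at each stage is the one point that genuinely needs attention in the $\mathbb{Z}$-algebra setting, and you handle it properly. One small remark: in the ``Comparing'' step you should note that the two isomorphisms are compatible (the split short exact sequence has injective left map $\iota\otimes 1$, while minimality forces $\iota\otimes 1=0$), since an abstract isomorphism $X\cong X\oplus Y$ alone need not force $Y=0$; your argument implicitly uses this compatibility, so the conclusion stands.
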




\subsection{Internal Tor.}  \label{section.tor}  Let $M$ be an $A-B$-bimodule.  As $-\intotimes M:{\sf Gr }A \rightarrow {\sf Gr }B$ is right exact and ${\sf Gr }A$ has enough projectives, the left-derived functors of $-\intotimes M$ exist and can be computed using projective resolutions.  We denote the $i$th such $\uTor_{i}^{A}(-,M)$.  It follows immediately that $\uTor_{i}^{A}(F,M)=0$ if $i>0$ and $F$ is projective.  In addition, the proof that the bifunctor $\uTor_{i}^{A}(-,-)$ is balanced follows the usual proof \cite[Theorem 2.7.2]{weibel}.  For the readers convenience, we include the $\mathbb{Z}$-algebra version of \cite[Proposition 4.4.11]{weibel} below.

\begin{lemma} \label{lemma.preliminary}
Let $N$ be a left-bounded object of ${\sf Gr }A$, let $\operatorname{pd} N \neq 0$, and suppose
$$
0 \rightarrow L \rightarrow F \rightarrow N \rightarrow 0
$$
is the start of a minimal free resolution (which exists by Proposition \ref{prop.min}).  Then
\begin{enumerate}
\item{} $\uTor_{d+1}^{A}(N,A_{0}) \cong \uTor_{d}^{A}(L,A_{0})$ for $d \geq 1$, and

\item{} $\operatorname{pd }N \leq 1+ \operatorname{pd }L$.
\end{enumerate}

\end{lemma}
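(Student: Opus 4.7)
The plan is to adapt the standard argument (essentially Weibel's proof of Proposition 4.4.11) to the $\mathbb{Z}$-algebra setting. Both parts rest on the long exact sequence of $\uTor^{A}_{*}(-,A_{0})$ applied to the short exact sequence $0 \to L \to F \to N \to 0$, combined with the fact that $F$, being free, is a direct sum of the projective generators $e_{i}A$ of ${\sf Gr}\,A$ and hence projective.

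For part (1), I would invoke the long exact sequence of $\uTor^{A}_{*}(-,A_{0})$ to extract, for each $d \geq 1$, the four-term piece
$$\uTor^{A}_{d+1}(F,A_{0}) \;\to\; \uTor^{A}_{d+1}(N,A_{0}) \;\to\; \uTor^{A}_{d}(L,A_{0}) \;\to\; \uTor^{A}_{d}(F,A_{0}).$$
The outer two terms vanish because $F$ is projective, so higher $\uTor$ against $F$ is zero (this is the one place where the existence of projective resolutions via Proposition \ref{prop.min} and the balanced-Tor remark from Section \ref{section.tor} is used). The connecting map is therefore an isomorphism, giving exactly the statement. The restriction $d \geq 1$ is needed precisely to keep the argument inside the range where both flanking terms vanish; the case $d=0$ would require a separate argument using minimality of the resolution.

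For part (2), the dichotomy is clean. If $\operatorname{pd} L = \infty$ the bound is vacuous, so assume $n := \operatorname{pd} L < \infty$ and choose a projective resolution $P_{\bullet} \to L$ of length $n$. Splicing $P_{\bullet}$ onto the sequence $0 \to L \to F \to N \to 0$ produces a projective resolution of $N$ of length $n+1$, whence $\operatorname{pd} N \leq n+1 = 1 + \operatorname{pd} L$. The hypothesis $\operatorname{pd} N \neq 0$ is used only upstream to guarantee that $L$ is genuinely nontrivial (otherwise $F \cong N$ would already be projective) so that ``the start of a minimal free resolution'' of the stated shape actually exists; it does not enter the inequality itself.

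The whole argument is routine; the only point requiring any care is confirming that $\uTor^{A}_{*}(-,M)$, as defined in Section \ref{section.tor} via the internal tensor product, genuinely produces the expected long exact sequence and that vanishing on projectives behaves as in the classical case. Both facts follow from the observation already made in the excerpt that $-\intotimes M$ is right exact on the Grothendieck category ${\sf Gr}\,A$, which has enough projectives, together with the usual construction of derived functors from projective resolutions.
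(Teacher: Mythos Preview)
Your proof is correct and matches the paper's approach: part (1) via the long exact Tor sequence with the flanking terms vanishing because $F$ is projective, and part (2) by relating projective resolutions of $L$ and $N$ through the given short exact sequence. The only cosmetic difference is that the paper phrases part (2) as truncating a minimal free resolution of $N$ to obtain one of $L$ (then invoking Corollary \ref{cor.min}), whereas you splice in the other direction; the two are equivalent.
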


\begin{proof}
The first result follows from the long exact sequence induced by applying $- \intotimes A_{0}$ to the given short exact sequence.

If $\cdots \rightarrow F_{i} \rightarrow \cdots \rightarrow F_{1} \rightarrow F_{0} \rightarrow N \rightarrow 0$ is the continuation of the given short exact sequence to a minimal free resolution for $N$, then it can be truncated to a minimal free resolution
$$
\cdots \rightarrow F_{i} \rightarrow \cdots \rightarrow F_{1} \rightarrow L \rightarrow 0
$$
of $L$, whence the second result.
\end{proof}

\begin{proposition} \label{prop.4.9}
Suppose $N$ is left-bounded by $d$.  Then $\operatorname{pd }N \leq n$ if and only if $\uTor_{n+1}^{A}(N,A_{0})=0$.  In particular, $N$ is projective if and only if $N$ has the form $\bigoplus_{j\in J}e_{j}A$ for some multiset $J$ such that $j \in J$ implies $j \geq d$.
\end{proposition}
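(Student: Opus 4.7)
The plan is to prove both directions separately. The forward direction is immediate: if $\operatorname{pd} N \leq n$, then $\uTor_{i}^{A}(N,-)=0$ for $i>n$, and in particular $\uTor_{n+1}^{A}(N,A_{0})=0$. For the converse, I will proceed by induction on $n$, and the ``in particular'' statement will essentially fall out of the base case.

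The base case $n=0$ is the main content. Assume $\uTor_{1}^{A}(N,A_{0})=0$. By Proposition \ref{prop.min}, $N$ admits a minimal free resolution whose beginning is a short exact sequence
$$
0 \to L \to F \to N \to 0,
$$
where $F = \bigoplus_{j\in J} e_{j}A$ with $j \geq d$ for $j \in J$, and $L \subset F$ is likewise left-bounded by $d$. Applying the Tor long exact sequence with $A_{0}$ (and using that $F$ is free to kill $\uTor_{1}^{A}(F,A_{0})$) yields
$$
0 \to L \intotimes A_{0} \to F \intotimes A_{0} \to N \intotimes A_{0} \to 0.
$$
The key observation is that minimality forces $\operatorname{im}(L \to F) \subset FA_{\geq 1}$, and after tensoring on the right with $A_{0} = A/A_{\geq 1}$ any such map becomes zero. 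Combining zero with the injectivity above gives $L \intotimes A_{0} = 0$, and Nakayama (Lemma \ref{lemma.nak}) then forces $L = 0$. Hence $N \cong F$ is free, and in particular has the desired form with generators in degrees $\geq d$.

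For the inductive step, suppose $n \geq 1$ and the implication holds for $n-1$. Assume $\uTor_{n+1}^{A}(N,A_{0})=0$; we may assume $\operatorname{pd} N \neq 0$, since otherwise there is nothing to prove. Take a minimal free resolution starting $0 \to L \to F \to N \to 0$, with $L$ left-bounded (being a submodule of the left-bounded $F$). By Lemma \ref{lemma.preliminary}(1),
$$
\uTor_{n}^{A}(L,A_{0}) \cong \uTor_{n+1}^{A}(N,A_{0}) = 0,
$$
so the induction hypothesis applied to $L$ gives $\operatorname{pd} L \leq n-1$. Lemma \ref{lemma.preliminary}(2) then yields $\operatorname{pd} N \leq 1 + \operatorname{pd} L \leq n$.

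Finally, the ``in particular'' assertion: the direction $(\Leftarrow)$ is immediate since each $e_{j}A$ is projective. For $(\Rightarrow)$, a projective $N$ certainly satisfies $\uTor_{1}^{A}(N,A_{0})=0$, so the base case construction produces the isomorphism $N \cong F = \bigoplus_{j\in J} e_{j}A$ with $j \geq d$ for $j \in J$, as required. The main obstacle is the ``minimality implies vanishing'' step in the base case; everything else is a routine dimension-shift and an invocation of Nakayama.
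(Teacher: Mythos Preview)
Your proof is correct and follows essentially the same approach as the paper's own argument: both handle the forward direction trivially, establish the base case $n=0$ by showing that minimality forces $L\intotimes A_0=0$ and then invoking Nakayama, and complete the induction via the dimension-shift of Lemma~\ref{lemma.preliminary}. Your treatment of the base case is in fact slightly more explicit than the paper's, which simply asserts that minimality implies $L\intotimes A_0=0$ without spelling out the ``image lands in $FA_{\geq 1}$, hence vanishes after tensoring'' step.
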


\begin{proof}
The forward direction of the first statement is immediate since $\uTor_{n+1}^{A}(N,A_{0})$ is computable using projective resolutions of $N$.

Conversely, we first suppose $n=0$, i.e. $\uTor^{A}_{1}(N,A_{0})=0$.  If we have the start of a minimal free resolution
\begin{equation} \label{eqn.usualses}
0 \rightarrow L \rightarrow F \rightarrow N \rightarrow 0
\end{equation}
then, by hypothesis, the induced sequence
$$
0 \rightarrow L\intotimes A_{0} \rightarrow F \intotimes A_{0} \rightarrow N \intotimes A_{0} \rightarrow 0
$$
is exact.  Minimality of (\ref{eqn.usualses}) implies $L \intotimes A_{0}=0$, so that, since $L$ is left-bounded, Lemma \ref{lemma.nak} implies that $L=0$.  Therefore $N$ has the form $\bigoplus_{j\in J}e_{j}A$ for some multiset $J$ such that $j \in J$ implies $j \geq d$, so $\operatorname{pd }N=0$.  This establishes the backwards direction of the first statement in case $n=0$ and, together with the forward direction of the first statement, proves the second statement.

Finally, we complete the proof of the first statement.  Suppose $n>0$.  The first part of Lemma \ref{lemma.preliminary} implies $\uTor^{A}_{n}(L,A_{0})=0$ so that, by induction, $\operatorname{pd }(L) \leq n-1$.  Then the second part of Lemma \ref{lemma.preliminary} implies that $\operatorname{pd }N \leq n$.
\end{proof}
The next result follows immediately.
\begin{corollary} \label{cor.pdn}
Suppose $N$ is a left-bounded object of ${\sf Gr }A$.  Then
$$
\operatorname{pd} N = \operatorname{sup }\{p|\uTor^{A}_{p}(N,A_{0}) \neq 0\}.
$$
\end{corollary}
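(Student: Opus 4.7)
The plan is to read off the corollary directly from Proposition \ref{prop.4.9}, using the standard vanishing of $\uTor$ above projective dimension. Let $s := \sup\{p\mid \uTor^A_p(N,A_0)\neq 0\}$, with the convention that the supremum of the empty set is $-\infty$, and set $d := \operatorname{pd} N$.

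First I would handle the infinite case: if $d=\infty$, then for every integer $n\geq 0$ we have $d \not\leq n$, so Proposition \ref{prop.4.9} forces $\uTor^A_{n+1}(N,A_0)\neq 0$. Thus $s=\infty=d$.

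Next assume $d<\infty$. Since $\uTor^A_p(N,-)=0$ for $p>d$ (this is the standard fact that derived functors vanish above projective dimension, provable from any length-$d$ projective resolution), we immediately get $s\leq d$. For the reverse inequality I would split on $d$: if $d\geq 1$, then $d\not\leq d-1$, so Proposition \ref{prop.4.9} gives $\uTor^A_d(N,A_0)\neq 0$, hence $s\geq d$. If $d=0$ and $N\neq 0$, then by the second statement of Proposition \ref{prop.4.9}, $N \cong \bigoplus_{j\in J}e_jA$ with $J$ nonempty; a one-line computation shows $e_jA\intotimes A_0 \cong e_jA_0$, which is nonzero since $A_{jj}$ is a division ring, so $N \intotimes A_0 \neq 0$ and $s\geq 0=d$. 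The degenerate case $N=0$ gives both sides equal to $-\infty$ (or $0$ depending on convention), so no issue.

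I don't anticipate any real obstacle: Proposition \ref{prop.4.9} already packages the hard direction (the implication $\uTor^A_{n+1}(N,A_0)=0\Rightarrow \operatorname{pd} N\leq n$), which rests on Nakayama (Lemma \ref{lemma.nak}) and the existence of minimal free resolutions (Proposition \ref{prop.min}). The corollary is simply its logical rephrasing as an equality of sup-indices, once the trivial cases $d=0$ and $d=\infty$ are noted.
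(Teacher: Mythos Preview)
Your argument is correct and is precisely the unpacking of the paper's one-line ``follows immediately'': the corollary is read off from Proposition~\ref{prop.4.9} together with the trivial vanishing of $\uTor$ above the projective dimension, with the edge cases $d=0$, $d=\infty$, and $N=0$ handled as you do. There is nothing different here from the paper's intended approach, only more detail.
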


We note that $A_{0}e_{j}$ is an $A-A$-bimodule, so $\uTor^{A}_{p}(N,A_{0}e_{j})$ exists.  Furthermore, since $-\intotimes Ae_j$ is an exact functor, we have
$$
\uTor^{A}_{p}(N,A_{0})e_{j} \cong \uTor^{A}_{p}(N,A_{0}e_{j}).
$$
It thus follows from Corollary \ref{cor.pdn} that
$$
\operatorname{pd }N = \operatorname{sup }\{p|\uTor^{A}_{p}(N,A_{0}e_{j}) \neq 0 \mbox{ for some } j \in \mathbb{N}\}.
$$

If $M$ is a $B-A$-bimodule, one can also consider the left-derived functors of $M \intotimes -:A-{\sf Gr} \rightarrow B-{\sf Gr}$.  The above results (Lemma \ref{lemma.nak}, Proposition \ref{prop.min}, Lemma \ref{lemma.preliminary}, Proposition \ref{prop.4.9} and Corollary \ref{cor.pdn}) each have versions for right-bounded graded left $A$-modules with virtually identical proofs.  In particular, we single out

\begin{prop} \label{prop.pdn2}
Suppose $M$ is a right-bounded object of $A-{\sf Gr}$.  Then
$$
\operatorname{pd} M = \operatorname{sup }\{p|\uTor^{A}_{p}(A_{0},M) \neq 0\}.
$$
\end{prop}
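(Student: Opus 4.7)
The plan is to mirror, in the left-module setting, the chain of results culminating in Corollary \ref{cor.pdn}, substituting ``right-bounded'' for ``left-bounded'' and the functor $A_{0} \intotimes -$ for $- \intotimes A_{0}$. The paper signals that the arguments are virtually identical, so the task is to verify that each step transfers cleanly.

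First I would prove a left-module Nakayama's Lemma: if $M \in A-{\sf Gr}$ is right-bounded by $d$ and $A_0 \intotimes M = 0$, then $M = 0$. Applying the right-exact functor $- \intotimes M$ to the sequence $A_{\geq 1} \to A \to A_0 \to 0$ of $A$-$A$-bimodules yields $A_{\geq 1} \intotimes M \to M \to A_0 \intotimes M \to 0$. Because $(A_{\geq 1})_{ij} \neq 0$ forces $j \geq i + 1$, the image of the left arrow sits in degrees $\leq d - 1$; hence the hypothesis forces $M_d = 0$, and iterating proves $M = 0$. The surjectivity criterion follows by applying this to $\operatorname{cok} f$.

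Next, an analog of Proposition \ref{prop.min} produces minimal free resolutions for right-bounded $M$: pick a graded $k$-subspace $U$ of $M$ complementary to $A_{\geq 1} M$ in each degree, lift an $A_{jj}$-basis of $U_j$ to produce a map $\phi: \bigoplus_{j\in J} Ae_j \to M$ with $j \leq d$ throughout, and invoke Nakayama for surjectivity. The analog of Lemma \ref{lemma.preliminary} then follows via the long exact $\uTor$-sequence applied to the first syzygy $0 \to L \to F \to M \to 0$, giving $\uTor^A_{d+1}(A_0, M) \cong \uTor^A_d(A_0, L)$ and $\operatorname{pd} M \leq 1 + \operatorname{pd} L$. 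The analog of Proposition \ref{prop.4.9} then proceeds by induction on $n$: the base case $n=0$ uses minimality (i.e.\ $L \subset A_{\geq 1} F$) to conclude that $A_0 \intotimes L \to A_0 \intotimes F$ is the zero map, so exactness from $\uTor^A_1(A_0, M) = 0$ forces $A_0 \intotimes L = 0$, and Nakayama gives $L = 0$. Taking $n = \operatorname{pd} M$ in the resulting equivalence yields the desired supremum formula.

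The only technical point to double-check is that the balancedness of $\uTor$ established for right modules in Section \ref{section.tor} extends so that $\uTor^A_p(A_0, M)$ for left modules $M$ can be computed either from a projective resolution of $M$ in $A-{\sf Gr}$ or from one of $A_0$ as a right $A$-module; this is a routine adaptation of the standard balancing argument. Otherwise there is no genuine obstacle: right-boundedness controls degrees in exactly the way left-boundedness did in the right-module case, and the proof is a line-by-line transcription.
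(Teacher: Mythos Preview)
Your proposal is correct and follows exactly the approach the paper indicates: the paper's ``proof'' of Proposition~\ref{prop.pdn2} is simply the assertion that Lemma~\ref{lemma.nak}, Proposition~\ref{prop.min}, Lemma~\ref{lemma.preliminary}, Proposition~\ref{prop.4.9} and Corollary~\ref{cor.pdn} have left-module analogs with virtually identical proofs, and you have correctly spelled out each of these analogs.

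One minor remark: your closing paragraph about balancedness is unnecessary for this particular result. Throughout your argument, $\uTor^{A}_{p}(A_{0},M)$ is by definition the $p$th left-derived functor of $A_{0}\intotimes -$ evaluated at $M$, hence is computed from a projective resolution of $M$ in $A-{\sf Gr}$; the long exact sequences you invoke are in the second variable and come directly from this derived-functor structure. Balancedness only enters later in the paper (e.g.\ in Proposition~\ref{prop.gldim} and Corollary~\ref{cor.sbalance}), not here.
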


\begin{cor} \label{cor.sbalance}
Let $A$ be a connected $\mathbb{Z}$-algebra.  Then
$$
\operatorname{sup }\{\operatorname{pd }e_{i}A_{0}| i \in \mathbb{Z}\}=\operatorname{sup }\{\operatorname{pd }A_{0}e_{j}| j \in \mathbb{Z}\}.
$$
\end{cor}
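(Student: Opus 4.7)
The plan is to reduce both sides of the claimed equality to the same double supremum
$$\sup\{p \mid \uTor^A_p(e_iA_0, A_0e_j) \neq 0 \text{ for some } i,j \in \mathbb{Z}\}$$
and then to invoke balance of $\uTor^A$ (recorded in the paragraph before Lemma \ref{lemma.preliminary}) to make this expression unambiguous. The two main ingredients are Corollary \ref{cor.pdn} and Proposition \ref{prop.pdn2}, which respectively compute the projective dimension of a left-bounded right module and of a right-bounded left module in terms of Tor against $A_0$, together with the bimodule decomposition $A_0 = \bigoplus_n e_nA_0 = \bigoplus_n A_0 e_n$ (each summand being concentrated in a single degree, hence a sub-bimodule).

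For the left-hand side of the corollary I would fix $i$, apply Corollary \ref{cor.pdn} to the left-bounded right module $e_iA_0$, and use the isomorphism $\uTor^A_p(e_iA_0, A_0)e_j \cong \uTor^A_p(e_iA_0, A_0e_j)$ highlighted in the discussion preceding Proposition \ref{prop.pdn2} (arising from exactness of $-\intotimes Ae_j$) to deduce
$$\operatorname{pd} e_iA_0 = \sup\{p \mid \uTor^A_p(e_iA_0, A_0e_j) \neq 0 \text{ for some } j \in \mathbb{Z}\}.$$
Taking the supremum over $i$ then rewrites the left-hand side of the corollary as the targeted double supremum.

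Symmetrically, I would apply Proposition \ref{prop.pdn2} to the right-bounded left module $A_0e_j$ and use the left-module analog $e_i\uTor^A_p(A_0, A_0e_j) \cong \uTor^A_p(e_iA_0, A_0e_j)$ (coming from exactness of $e_iA \intotimes -$ on left modules) to obtain $\operatorname{pd} A_0e_j = \sup\{p \mid \uTor^A_p(e_iA_0, A_0e_j) \neq 0 \text{ for some } i \in \mathbb{Z}\}$; taking the supremum over $j$ produces the same double supremum. Since balance of $\uTor^A$ ensures that $\uTor^A_p(e_iA_0, A_0e_j)$ does not depend on which variable is resolved, both sides of the corollary equal this common expression and the result follows. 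I do not anticipate a serious obstacle: the argument is formal bookkeeping once the pd-via-Tor formulas are in hand and one knows that tensor commutes with the direct-sum decompositions of $A_0$, which is precisely the content of the exactness remarks at the end of Section \ref{section.tor}.
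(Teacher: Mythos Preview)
Your proposal is correct and follows essentially the same approach as the paper: both reduce each side to the common double supremum $\sup\{p \mid \uTor^A_p(e_iA_0, A_0e_j) \neq 0 \text{ for some } i,j\}$ via Corollary~\ref{cor.pdn}, Proposition~\ref{prop.pdn2}, and the decomposition of Tor against $A_0$ into its components. The paper's proof is simply a terser version of the argument you outline.
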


\begin{proof}
By Corollary \ref{cor.pdn} and Proposition \ref{prop.pdn2},
\begin{eqnarray*}
\operatorname{sup }\{\operatorname{pd }e_{i}A_{0}| i \in \mathbb{Z}\} & = & \operatorname{sup }\{p|\uTor^{A}_{p}(e_{i}A_{0},A_{0}e_{j}) \neq 0 \mbox{ for some } i,j \in \mathbb{N}\} \\
& = & \operatorname{sup }\{\operatorname{pd }A_{0}e_{j}| j \in \mathbb{Z}\}.
\end{eqnarray*}
\end{proof}

\begin{proposition} \label{prop.gldim}
Let $A$ be a connected $\mathbb{Z}$-algebra such that
$$
\operatorname{sup }\{\operatorname{pd }A_{0}e_{j}| j \in \mathbb{Z}\} = n < \infty
$$
and let $N$ be a left-bounded object of ${\sf Gr }A$.  Then $\operatorname{pd }N \leq n$.
\end{proposition}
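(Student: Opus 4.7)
My plan is to reduce the statement to a computation of $\uTor$ and then exploit the balance of $\uTor$ to transfer the projective-dimension bound from the left-module side to the right-module side.

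First, by Corollary \ref{cor.pdn} applied to the left-bounded module $N$, it suffices to show that $\uTor^{A}_{p}(N, A_{0}) = 0$ for every $p > n$. As observed in the paragraph following Corollary \ref{cor.pdn}, the exactness of $-\intotimes Ae_{j}$ gives
\[
\uTor^{A}_{p}(N, A_{0})e_{j} \cong \uTor^{A}_{p}(N, A_{0}e_{j}),
\]
so it is enough to prove $\uTor^{A}_{p}(N, A_{0}e_{j}) = 0$ for every $j \in \mathbb{Z}$ and every $p > n$.

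Now $A_{0}e_{j}$ is a right-bounded object of $A-{\sf Gr}$, and the hypothesis together with the left-module analogue of Proposition \ref{prop.4.9} (stated in the paragraph preceding Proposition \ref{prop.pdn2}) produces, for each $j$, a projective resolution of $A_{0}e_{j}$ in $A-{\sf Gr}$ of length at most $n$. Since the bifunctor $\uTor^{A}_{\bullet}(-,-)$ is balanced (as noted at the start of Section \ref{section.tor}, by the standard argument of \cite[Theorem 2.7.2]{weibel}), we can compute $\uTor^{A}_{p}(N, A_{0}e_{j})$ as the $p$-th homology of $N \intotimes P_{\bullet}$, where $P_{\bullet} \to A_{0}e_{j}$ is such a length-$n$ projective resolution. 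This forces $\uTor^{A}_{p}(N, A_{0}e_{j}) = 0$ for all $p > n$, which completes the argument.

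The only point requiring care is the balance of $\uTor$, and in particular checking that computing the derived functors of $-\intotimes M$ (as defined in Section \ref{section.tensordef}) via a projective resolution of $M$ in $A-{\sf Gr}$ yields the same groups as via a projective resolution of $N$ in ${\sf Gr }A$; but this follows from the standard spectral sequence argument once one notes, as in Remark \ref{remark.tensor}, that tensoring with a projective bimodule of the form $Ae_{i}\otimes_{k}e_{j}A$ is exact in each variable, and that the projective generators from Lemma \ref{lemma.bproj} are both left- and right-flat. So no genuine obstacle arises; the proof is a straightforward application of balance combined with the results already established in this section.
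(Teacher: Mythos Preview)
Your argument is correct and follows essentially the same route as the paper's proof: reduce via Corollary~\ref{cor.pdn} to the vanishing of $\uTor^{A}_{p}(N,A_{0}e_{j})$ for $p>n$, then invoke balance of $\uTor$ to compute this from a length-$n$ projective resolution of $A_{0}e_{j}$ in $A\text{-}{\sf Gr}$ (the paper phrases this last step via Corollary~\ref{cor.min}). Your final paragraph on balance drifts toward bimodule projectives when only one-sided ones are needed, but since balance is already recorded in Section~\ref{section.tor} this does not affect the argument.
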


\begin{proof}
By the comment following Corollary \ref{cor.pdn}, it suffices to prove that
$$
\uTor^{A}_{p}(N,A_{0}e_{i}) =  0
$$
for all $i \in \mathbb{Z}$ and all $p \geq n+1$.  Since $N$ is left-bounded, it has a minimal free resolution by Proposition \ref{prop.min}.  Tensoring this resolution with $A_{0}$ and taking homology at the $p$th term yields $\uTor^{A}_{p}(N,A_{0})$.  Since $\uTor^{A}_{p}(-,-)$ is balanced, $\uTor^{A}_{p}(N,A_{0}e_{i})=\uTor^{A}_{p}(N,A_{0})e_{i}$ can also be computed by tensoring a minimal free resolution of $A_{0}e_{i}$ by $N$.  The result now follows from Corollary \ref{cor.min}.
\end{proof}

\section{Internal Hom functors and their right derived functors} \label{section.shom}
In Section \ref{section.inthom} and Section \ref{section.intext} we review the definition and basic properties of the internal Hom functors and their derived functors studied in \cite{abstract}.

\subsection{Internal Hom} \label{section.inthom}
We begin by defining the internal Hom functor.  Suppose $I$ is a set and $A$, $B$, $C$ and $D$ are $I$-algebras.  For $M$ an object in ${\sf Bimod }(A-B)$ and $N$ an object in ${\sf Gr }B$, we let
$$
\sHomb(e_{i}M,N)
$$
denote the right $A_{ii}$-module with underlying set  $\operatorname{Hom}_{{\sf Gr}B}(e_{i}M,N)$ and with $A_{ii}$-action induced by the left action of $A_{ii}$ on $e_{i}M$, and we let
$$
{\usHomb}(M,N)
$$
denote the object in ${\sf Gr }A$ with $i$th component $\sHomb(e_{i}M,N)$ and with multiplication induced by left-multiplication of $A$ on $M$.


Next, suppose $M$ is an $A-B$-bimodule and $N$ is a $C-B$-bimodule.  We define a $C-A$-bimodule $\usHomb(M,N)$, as follows: we let
$$
\usHomb(M,N)_{ij} := \usHomb(M,e_{i}N)_{j},
$$
we endow $\usHomb(M,N)$ with right $A$-module multiplication inherited from that of $\usHomb(M,e_{i}N)$ (where we consider $e_{i}N$ to be an object of ${\sf Gr }B$), and we let the left $C$-module multiplication be canonical.

\begin{remark} \label{remark.hom} We note that $\usHomb(-,-)$ is left exact in each variable, and for each injective $J$ in ${\sf Bimod }(C-B)$, $\usHomb(-,J)$ is exact.
\end{remark}


The following result is elementary but will be needed later.
\begin{lemma} \label{lemma.restriction}
Let $M$ be an $A-B$-bimodule and let $N$ be a $C-B$-bimodule.  Then there is an equality
$$
\operatorname{Res}_{k}(\usHomb(M,N))=\usHomb(\operatorname{Res}_{B}M,\operatorname{Res}_{B}N).
$$
\end{lemma}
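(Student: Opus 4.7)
The plan is to prove the lemma by unraveling both sides of the claimed equality to the level of their underlying graded components, and to observe that both are tautologically the same $K$-$K$-bimodule. There is essentially no mathematical content beyond a careful bookkeeping of the definitions of $\usHomb$ and of the restriction functors $\operatorname{Res}_{B}$ and $\operatorname{Res}_{k}$; the point is that the left $A$-structure on $M$ and the left $C$-structure on $N$ enter only into the $C$-$A$-bimodule structure on $\usHomb(M,N)$, which is exactly what $\operatorname{Res}_{k}$ forgets.

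First I would compute the $(i,j)$-component of the left-hand side. By the definition preceding Remark \ref{remark.hom}, $\usHomb(M,N)$ is the $C$-$A$-bimodule with
$$
\usHomb(M,N)_{ij} = \usHomb(M,e_{i}N)_{j} = \sHomb(e_{j}M, e_{i}N) = \operatorname{Hom}_{{\sf Gr}\,B}(e_{j}M, e_{i}N),
$$
where the right $A_{jj}$-action is inherited from the left $A_{jj}$-action on $e_{j}M$ and the left $C_{ii}$-action from the left $C_{ii}$-action on $e_{i}N$. Applying $\operatorname{Res}_{k}$ simply discards these $A$- and $C$-actions, leaving the $K$-$K$-bimodule whose $(i,j)$-entry is $\operatorname{Hom}_{{\sf Gr}\,B}(e_{j}M, e_{i}N)$, with $K$ acting only through the idempotents $\tilde e_i,\tilde e_j$ selecting the component.

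Next I would compute the $(i,j)$-component of the right-hand side. Since $\operatorname{Res}_{B}M$ is the $K$-$B$-bimodule obtained from $M$ by retaining only the right $B$-structure, one has $e_{j}\operatorname{Res}_{B}M = e_{j}M$ as objects of ${\sf Gr}\,B$, and similarly $e_{i}\operatorname{Res}_{B}N = e_{i}N$. Applying the definition of $\usHomb$ to the pair of bimodules $(\operatorname{Res}_{B}M,\operatorname{Res}_{B}N)$ then yields
$$
\usHomb(\operatorname{Res}_{B}M,\operatorname{Res}_{B}N)_{ij} = \operatorname{Hom}_{{\sf Gr}\,B}(e_{j}M, e_{i}N),
$$
again with the trivial $K$-$K$-bimodule structure prescribed by the idempotents. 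Comparing the two descriptions component by component, and noting that the $K$-actions on both sides are determined in exactly the same way by the underlying indexing, we conclude that the two sides coincide as $K$-$K$-bimodules, not merely up to canonical isomorphism.

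The main (minor) obstacle is simply being careful to distinguish equality from canonical isomorphism and to track which action comes from which factor; once the definitions are laid out side by side, the verification is immediate.
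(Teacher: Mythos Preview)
Your proposal is correct and is exactly the kind of routine unwinding of definitions the paper has in mind; indeed the paper omits the proof entirely, declaring the result ``elementary,'' so your component-by-component verification is precisely the intended argument.
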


As expected, one has adjointness of internal Hom and tensor:

\begin{prop} \label{prop.intadjoint}
Let $M$ be an $A-B$-bimodule, let $N$ be a $B-C$-bimodule, and let $P$ be a $D-C$-bimodule.  Then
\begin{enumerate}
\item{}
for each $i,j \in I$, there is a canonical isomorphism of $D_{ii}-A_{jj}$-bimodules natural in all inputs
$$
\psi_{ij}: \sHomc((e_{j}M)\intotimesb N, e_{i}P) \overset{\cong}{\longrightarrow} \sHomb(e_{j}M, \usHomc(N,e_{i}P)),
$$

\item{}
there is a canonical isomorphism of $D-A$-bimodules natural in all inputs
$$
\Psi: \usHomc(M \intotimesb N,P) \overset{\cong}{\longrightarrow} \usHomb(M, \usHomc(N,P)).
$$
\end{enumerate}
\end{prop}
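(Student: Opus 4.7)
The plan is the standard Hom-tensor adjunction, adapted to the $I$-algebra setting. I would first establish part (1) on elements and then assemble the isomorphisms of part (1) over all $i, j \in I$ to obtain the bimodule isomorphism of part (2).

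For part (1), fix $i, j \in I$. Given $f \in \sHomc((e_{j}M)\intotimesb N, e_{i}P)$, I would define $\psi_{ij}(f)$ as follows: for $m \in (e_{j}M)_{l}$, let $\psi_{ij}(f)(m)$ be the element of $e_{l}\usHomc(N, e_{i}P) = \usHomc(e_{l}N, e_{i}P)$ whose value at $n \in (e_{l}N)_{k}$ is $f(\overline{m \otimes n}) \in (e_{i}P)_{k}$, where $\overline{m \otimes n}$ denotes the image in the cokernel defining $(e_{j}M)\intotimesb N$. The well-definedness (independence from the choice of tensor representative) amounts to the observation that for $b \in B_{ll'}$, the relation $\overline{mb \otimes n'} = \overline{m \otimes bn'}$ in the cokernel translates precisely to the $C$-linearity statement required. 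From there, verifying that $\psi_{ij}(f)$ is a morphism of graded right $C$-modules, that $\psi_{ij}$ itself is a morphism of right $A_{jj}$-modules (using left $A_{jj}$-action on $e_{j}M$) and left $D_{ii}$-modules (using left $D_{ii}$-action on $e_{i}P$), and that $\psi_{ij}$ is natural in $M$, $N$, $P$, are all mechanical checks.

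I would construct the inverse by the usual recipe: given $g \in \sHomb(e_{j}M, \usHomc(N,e_{i}P))$, define $\psi_{ij}^{-1}(g)$ on simple tensors by $\overline{m \otimes n} \mapsto g(m)(n)$ and extend. Well-definedness on the cokernel follows from the fact that $g(m)$ is a $C$-module homomorphism and $g$ is a $B$-module homomorphism, so $g(mb)(n) = (g(m) \cdot b)(n) = g(m)(bn)$. That $\psi_{ij}$ and $\psi_{ij}^{-1}$ are mutually inverse is immediate from the constructions.

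For part (2), the $(i,j)$-component of the desired isomorphism $\Psi$ is, by definition of the outer internal Hom construction,
\[
\usHomc(M \intotimesb N, P)_{ij} = \sHomc(e_{j}(M \intotimesb N), e_{i}P) = \sHomc((e_{j}M)\intotimesb N, e_{i}P),
\]
while
\[
\usHomb(M, \usHomc(N,P))_{ij} = \sHomb(e_{j}M, e_{i}\usHomc(N,P)) = \sHomb(e_{j}M, \usHomc(N,e_{i}P)),
\]
so I take $\Psi_{ij} := \psi_{ij}$. The remaining task is to verify that the resulting bijection on each bigraded piece is compatible with the full left $D$-action (coming from multiplication on the $e_{i}P$ slot) and right $A$-action (coming from multiplication on the $e_{j}M$ slot), and that $\Psi$ is natural in all three inputs. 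These compatibilities are routine, but must be checked carefully. The only mildly subtle point, and the step I expect to require the most care, is verifying well-definedness of $\psi_{ij}$ with respect to the cokernel presentation of $\intotimesb$ while simultaneously tracking that all bimodule actions land in the asserted components; this is essentially the $I$-algebra bookkeeping issue flagged in the paper's introduction to Section \ref{section.shom}.
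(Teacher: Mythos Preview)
Your proposal is correct and follows essentially the same approach as the paper's proof: construct $\psi_{ij}$ by the standard Hom--tensor adjunction recipe, then set $\Psi_{ij} := \psi_{ij}$ and verify bimodule compatibility. The paper's proof is terser and makes one small additional observation you might find useful: rather than checking the $D$- and $A$-compatibilities of $\Psi$ directly, the paper notes that these follow from the \emph{naturality} of $\psi_{ij}$ in part (1), since the left $D$-action and right $A$-action are induced by multiplication maps in the respective variables.
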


\begin{proof}
The first result is straightforward and follows the usual proof in the graded case.  For the second result, let $\Psi_{ij}:= \psi_{ij}$.  Then the fact that $\Psi$ is natural and a bimodule map follows from naturality in part (1).
\end{proof}

\subsection{The duality functor} \label{section.duality}
In this section, we define the duality functor and describe its basic properties.  Once again, we let $I$ be a set and we let $A$, $B$ and $C$ denote $I$-algebras.  If $M \in {\sf Bimod }(A-B)$, then we first observe that the $K-A$-bimodule $\usHomk({}_{A}\operatorname{Res }M,K)$ has a left $B$-module structure induced from the right structure of $M$, and we let
$$
D: {\sf Bimod }(A-B) \rightarrow {\sf Bimod }(B-A)
$$
denote the functor
$$
D(M) = \usHomk({}_{A}\operatorname{Res }M,K)
$$
with the left $B$-module structure defined above.  The functor $D$ is exact.

Furthermore, if $M$ is an object of ${\sf Gr }B$, we may consider it as an object $\overline{M}$ of ${\sf Bimod }(K-B)$ by defining
$$
\overline{M}_{ij} := \begin{cases} M_{j} & \mbox{ if $i=0$}, \\ 0 & \mbox{otherwise.} \end{cases}
$$
In this case, we let $D(M) := D(\overline{M})$.  A similar comment applies to $M$ in $B-{\sf Gr}$.

We omit the straightforward proof of the next lemma, which relates $D$ to the usual duality functor.
\begin{lemma} \label{lemma.usual}
Consider the functor $E:{\sf Bimod }(A-B) \rightarrow {\sf Bimod }(B-A)$ defined by letting $E(M)_{ij} := \operatorname{Hom}_{k}((M_{ji})_{k},k)$ and with bimodule structure induced by that of $M$.  Then $D \cong E$.
\end{lemma}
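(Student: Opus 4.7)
The plan is to construct an explicit natural isomorphism $\alpha_M : D(M) \to E(M)$ by unwinding the definition of $\usHomk$ in the special case where the second argument is $K$, and then to verify compatibility with each side of the bimodule structure.

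First, I would compute $D(M)_{ij}$ directly. By the definitions in Section~\ref{section.inthom},
$$D(M)_{ij} = \usHomk({}_A \operatorname{Res} M,\, K)_{ij} = \operatorname{Hom}_{{\sf Gr}\,K}\!\bigl(e_j({}_A\operatorname{Res} M),\, e_i K\bigr).$$
Since $K_{ll'} = k$ when $l = l'$ and is zero otherwise, the right $K$-module $e_i K$ is concentrated in degree $i$, where it equals $k$, while $e_j ({}_A\operatorname{Res}M)$ is the graded $k$-module with $l$th component $M_{jl}$. A graded $K$-module map between these is therefore determined by a single $k$-linear map in degree $i$, giving a canonical $k$-linear isomorphism
$$\alpha_{M,ij}: D(M)_{ij} \xrightarrow{\;\cong\;} \operatorname{Hom}_k(M_{ji}, k) = E(M)_{ij}, \qquad \phi \longmapsto \phi_i.$$

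Next, I would verify that $\alpha_M$ intertwines the right $A$-actions. On $D(M)$, right multiplication by $a \in A_{jj'}$ sends $\phi \in \operatorname{Hom}_{{\sf Gr}\,K}(e_j M, e_i K)$ to the map obtained by precomposing with the left multiplication $a \cdot - : e_{j'} M \to e_j M$; its degree-$i$ component sends $m \in M_{j'i}$ to $\phi_i(am)$, which is precisely the formula $(\alpha_M(\phi) \cdot a)(m)$ for the induced right $A$-action on $E(M)$. Likewise, I would check compatibility of the left $B$-actions: $b \in B_{i'i}$ acts on $\phi$ in $D(M)$ by precomposing with right multiplication by $b$ on $M$, yielding a map whose degree-$i'$ component sends $m \in M_{ji'}$ to $\phi_i(mb)$, matching the $E$-side formula $(b \cdot \alpha_M(\phi))(m) = \alpha_M(\phi)(mb)$.

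Finally, naturality of $\alpha_M$ in $M$ follows because the restriction ${}_A\operatorname{Res}$, the evaluation-in-degree-$i$ map, and all the bimodule operations are functorial in $M$; both $D$ and $E$ are contravariant, and the degree-$i$ evaluation clearly intertwines pullback along any morphism $f : M \to M'$. The only real work is careful index-bookkeeping to ensure the bimodule conventions of Section~\ref{section.duality} match; there is no conceptual obstacle, as the lemma is simply the reconciliation of the intrinsic definition of $D$ via the internal Hom into $K$ with the concrete componentwise $k$-dual.
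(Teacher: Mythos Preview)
Your proposal is correct and is precisely the straightforward verification that the paper alludes to but omits. The paper simply states that the proof is straightforward and leaves it out; your unwinding of $D(M)_{ij}$ via the definition of $\usHomk$ and the subsequent check of bimodule compatibility and naturality is exactly what is meant.
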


\begin{prop} \label{prop.duality}
Let $M$ be an $A-B$-bimodule and let $N$ be an $B-C$-bimodule.  Then there are left $C$-module structures on $\usHomb(M,\usHomk({}_{B}\operatorname{Res}N,K))$ and $\usHomk({}_{K}\operatorname{Res}(M \intotimesb N),K)$ such that the $K-A$-bimodule isomorphism
\begin{eqnarray*}
\usHomb(M,\usHomk({}_{B}\operatorname{Res}N,K)) & \cong & \usHomk(M \intotimesb {}_{B}\operatorname{Res }N,K) \\
& = & \usHomk({}_{A}\operatorname{Res}(M \intotimesb N),K)
\end{eqnarray*}
with first composite from Proposition \ref{prop.intadjoint} and with second composite from Lemma \ref{lemma.restensor}, is compatible with left $C$-module multiplication.  Therefore there is a natural isomorphism
$$
\usHomb(M, D(N)) \cong D(M \intotimesb N).
$$
\end{prop}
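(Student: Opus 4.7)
The plan is to begin by pinning down the two left $C$-module structures. The bimodule $D(N) = \usHomk({}_B\operatorname{Res} N, K)$ carries a left $C$-action by the very definition of $D$: the right $C$-action on $N$ dualizes to give $(c \cdot \varphi)(n) := \varphi(nc)$, so $D(N)$ is a $C$-$B$-bimodule. The general construction of $\usHomb$ then endows $\usHomb(M, D(N))$ with a $C$-$A$-bimodule structure in which $c \in C$ acts on $f$ by post-composition, $(c \cdot f)(m) := c \cdot f(m)$. On the other side, $M \intotimesb N$ is naturally an $A$-$C$-bimodule with right $C$-action inherited from that on $N$, so $D(M \intotimesb N) = \usHomk({}_A\operatorname{Res}(M \intotimesb N), K)$ becomes a $C$-$A$-bimodule in which $(c \cdot \alpha)(m \otimes n) = \alpha(m \otimes nc)$.

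With the $C$-actions fixed, the $K$-$A$-bimodule isomorphism in the statement is obtained by applying Proposition \ref{prop.intadjoint} with the outer algebra replaced by $K$, the middle bimodule replaced by ${}_B\operatorname{Res} N$, and $P = K$, and then invoking Lemma \ref{lemma.restensor} to identify $M \intotimesb {}_B\operatorname{Res} N$ with ${}_A\operatorname{Res}(M \intotimesb N)$. Concretely, $\Psi$ sends $\alpha : M \intotimesb N \to K$ to the map $m \mapsto (n \mapsto \alpha(m \otimes n))$.

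The heart of the argument is a direct computation showing that $\Psi$ intertwines the two left $C$-actions: for $c \in C$ and $\alpha \in D(M \intotimesb N)$ in the appropriate components,
\begin{align*}
\Psi(c \cdot \alpha)(m)(n) &= (c \cdot \alpha)(m \otimes n) = \alpha(m \otimes nc), \\
(c \cdot \Psi(\alpha))(m)(n) &= \bigl(c \cdot \Psi(\alpha)(m)\bigr)(n) = \Psi(\alpha)(m)(nc) = \alpha(m \otimes nc),
\end{align*}
so the two values agree. The asserted natural isomorphism $\usHomb(M, D(N)) \cong D(M \intotimesb N)$ then reads off by inverting $\Psi$ and substituting the definitions of $D$ on each side. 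The only real obstacle is the bookkeeping required to ensure that $\Psi$ is well-defined at the level of graded $K$-$A$-bimodules in the correct bigraded components and that the various restriction functors line up so that the right $C$-action on $N$ dualized through $\usHomk(-, K)$ genuinely coincides with the left $C$-action on $D(N)$ prescribed by the definition of $D$; once this is in place, naturality of the adjunction reduces the verification to the two-line calculation above.
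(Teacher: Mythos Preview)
Your proposal is correct and follows essentially the same approach as the paper: both identify the left $C$-structures on the two sides exactly as you do and then verify that the adjunction isomorphism from Proposition~\ref{prop.intadjoint} intertwines them. The paper phrases the compatibility check as the commutativity of a square indexed by $c_{li}\in C_{li}$ and then declares the computation routine and omits it; your two-line calculation is precisely that omitted verification.
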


\begin{proof}
We endow $\usHomb(M,\usHomk({}_{B}\operatorname{Res}N,K))$ with left $C$-module structure coming from the left structure on $\usHomk({}_{B}\operatorname{Res}N,K)$ defined above and similarly, we give $\usHomk({}_{A}\operatorname{Res}(M \intotimesb N),K)$ the left module structure as in the definition of the duality functor.

Next, let $c_{li} \in C_{li}$.  It suffices to show that the diagram
$$
\begin{CD}
\operatorname{Hom}_{K}((e_{j}M)\intotimesb {}_{B}\operatorname{Res }N,e_{i}K) & \longrightarrow & \operatorname{Hom}_{K}(e_{j}M,\usHomk({}_{B}\operatorname{Res }N,e_{i}K)) \\
@VVV @VVV \\
\operatorname{Hom}_{K}((e_{j}M)\intotimesb {}_{B}\operatorname{Res }N,e_{l}K) & \longrightarrow & \operatorname{Hom}_{K}(e_{j}M,\usHomk({}_{B}\operatorname{Res }N,e_{l}K))
\end{CD}
$$
with horizontals from adjointness and with verticals induced by left multiplication by $c_{li}$, commutes for all $j \in I$.  This can be checked explicitly and we omit the routine computation.
\end{proof}





\subsection{Internal Ext} \label{section.intext}
Let $I$ be a set, let $A$ and $B$ denote $I$-algebras, and let $M \in {\sf Bimod }(A-B)$.  In this section we study the right derived functors of $\usHomb(M,-)$ and $\sHomb(e_{j}M,-)$ considered as functors from ${\sf Gr }B$ to ${\sf Gr }B$.  The fact that $\usHomb(M,-)$ and $\sHomb(e_{j}M,-)$ have right derived functors follows from Remark \ref{remark.hom}.  We denote them by $\usExtb^{i}(M,-)$ and $\sExtb^{i}(e_{j}M,-)$.
We will invoke the following notation in this section: if $R$ is a ring, we let ${\sf Mod }R$ denote the category of right $R$-modules. If $S$ is a ring and $F$ is an $R-S$-bimodule, we let $F^{*}$ denote the right dual of $F$.  We note that since taking the $j$th degree part of an object of ${\sf Gr }B$ is an exact functor from ${\sf Gr }B$ to ${\sf Mod }B_{jj}$, we have
$$
(\usExtb^{i}(M,N))_{j} \cong \sExtb^{i}(e_{j}M,N)
$$
for all $N$ in ${\sf Gr }B$.  We further note that the sequence $(\sExtb^{i}(M,-))_{i \geq 0}$ forms a universal $\delta$-functor by \cite[III, Corollary 1.4]{hartshorne0}.

For the remainder of this section, we specialize to the case that $A$ is a connected $\mathbb{Z}$-algebra and $M$ is an $A-A$-bimodule.  For the readers convenience, we recall a result from \cite{abstract}, (Lemma \ref{lemma.extcommute} below), which will be needed in Section \ref{section.regularity}.  In order to do this, we introduce some notation.  Suppose $F$ is an $A_{jj}-A_{ii}$-bimodule of finite dimension on either side, and let $F \otimes_{A_{ii}} e_{i}M$ denote the object of ${\sf Bimod }(A-A)$ such that
$$
(F \otimes_{A_{ii}} e_{i}M)_{lm} = \begin{cases} F \otimes_{A_{ii}}M_{im} & \mbox{if $l=j$} \\ 0 & \mbox{otherwise,} \end{cases}
$$
endowed with the obvious bimodule structure.  We recall, from \cite[Lemma 6.5]{abstract}, that the proof of the following lemma comes from uniqueness of universal $\delta$-functors as in \cite[III, Remark 1.2.1]{hartshorne0}.
\begin{lemma}  \label{lemma.extcommute}
Suppose $F$ is an $A_{jj}-A_{ii}$-bimodule of finite dimension on either side, and let $F \otimes_{A_{ii}} e_{i}M$ denote the object of ${\sf Bimod }(A-A)$ defined above.  Then there is a natural isomorphism of ${\sf Mod }A_{jj}$-valued functors
$$
\sExt^{j}(F \otimes_{A_{ii}} e_{i}M,-) \cong \sExt^{j}(e_{i}M,-) \otimes_{A_{jj}} F^{*}.
$$
\end{lemma}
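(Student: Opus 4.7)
The proof proceeds via uniqueness of universal $\delta$-functors, as indicated by the authors. Here is the plan.

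First, I would exploit the fact that $A$ is connected, so that $A_{ii}$ and $A_{jj}$ are division rings over $k$. Hence any $A_{jj}$-$A_{ii}$-bimodule of finite dimension on either side is automatically finitely generated free on either side, so the canonical evaluation and coevaluation maps relating $F$ and $F^{*}$ are isomorphisms and $-\otimes_{A_{jj}}F^{*}$ is exact on ${\sf Mod}\,A_{jj}$. This frees us from any flatness/projectivity subtleties in the argument.

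Next I would establish the base case $j=0$: a natural isomorphism
$$
\sHom(F\otimes_{A_{ii}}e_{i}M,N) \;\cong\; \sHom(e_{i}M,N)\otimes_{A_{jj}}F^{*}
$$
of right $A_{jj}$-modules, natural in $N\in{\sf Gr}\,A$. Unpacking the bimodule $F\otimes_{A_{ii}}e_{i}M$ from the statement, a morphism out of it in ${\sf Gr}\,A$ is the same data as a map $e_{i}M\to N$ of right $A$-modules together with the appropriate $A_{jj}$-equivariant structure coming from $F$; concretely this is
$$
\sHom(F\otimes_{A_{ii}}e_{i}M,N) \;\cong\; \operatorname{Hom}_{A_{ii}}\bigl(F,\sHom(e_{i}M,N)\bigr),
$$
and since $F$ is finitely generated free (hence dualizable) over the division ring $A_{ii}$, the standard identification $\operatorname{Hom}_{A_{ii}}(F,-)\cong -\otimes_{A_{ii}}F^{*}$ then rewrites this as the right-hand side, with the $A_{jj}$-module structure from the left $A_{jj}$-action on $F$ transported to the right $A_{jj}$-action on $F^{*}$.

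The remaining step is to upgrade this degree-$0$ isomorphism to all degrees by the universal $\delta$-functor technology of \cite[III, Corollary 1.4, Remark 1.2.1]{hartshorne0}. Both sequences of functors
$$
T^{j} \;:=\; \sExt^{j}(F\otimes_{A_{ii}}e_{i}M,-),\qquad S^{j} \;:=\; \sExt^{j}(e_{i}M,-)\otimes_{A_{jj}}F^{*}
$$
are cohomological $\delta$-functors from ${\sf Gr}\,A$ to ${\sf Mod}\,A_{jj}$: for $T^{\bullet}$ this is automatic as it is a sequence of right derived functors, and for $S^{\bullet}$ it follows because $-\otimes_{A_{jj}}F^{*}$ is exact (by the first paragraph), so long exact sequences are preserved. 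To apply the uniqueness theorem I must check effaceability, i.e.\ that $T^{j}(J)=S^{j}(J)=0$ for $j\geq 1$ whenever $J$ is injective in ${\sf Gr}\,A$. For $T^{\bullet}$ this is the definition, and for $S^{\bullet}$ it follows because $\sExt^{j}(e_{i}M,J)=0$ for $j\geq 1$ when $J$ is injective. Both are therefore universal $\delta$-functors, and since they agree in degree $0$ by the previous paragraph, the isomorphism extends uniquely to every $j$.

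The only genuinely delicate point I expect is bookkeeping the $A_{jj}$-module structure through the degree-$0$ isomorphism: one must check that the right $A_{jj}$-action on $\sHom(F\otimes_{A_{ii}}e_{i}M,N)$ coming from the left $A_{jj}$-action on $F$ really matches the right $A_{jj}$-action on $\sHom(e_{i}M,N)\otimes_{A_{jj}}F^{*}$ coming from the right $A_{jj}$-action on $F^{*}$. This is routine once all conventions are fixed, but it is where the bulk of the computation lies; everything after that is formal $\delta$-functor nonsense.
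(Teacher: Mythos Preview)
Your proposal is correct and follows exactly the approach the paper indicates: the paper does not give a self-contained proof but simply recalls (from \cite[Lemma 6.5]{abstract}) that the argument proceeds via uniqueness of universal $\delta$-functors as in \cite[III, Remark 1.2.1]{hartshorne0}, which is precisely what you carry out in detail. One minor bookkeeping remark: your displayed base-case isomorphism uses $\otimes_{A_{jj}}F^{*}$ (matching the lemma as stated), while your derivation via $\operatorname{Hom}_{A_{ii}}(F,-)\cong -\otimes_{A_{ii}}F^{*}$ lands in a tensor over $A_{ii}$; the latter is the correct subscript (as confirmed by the application in Corollary~\ref{cor.extstar}), so you should reconcile this, but it does not affect the substance of the argument.
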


\begin{corollary} \label{cor.extstar}
Suppose $A$ is Ext-finite.  Then
$$
\usExt^{i}(A_{\geq n}/A_{\geq n+1},-)_j \cong \sExt^{i}(e_{j+n}A_0,-) \otimes_{A_{j+n, j+n}} A_{j, j+n}^{*}.
$$
\end{corollary}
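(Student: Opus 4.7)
The plan is to reduce the statement to a direct application of Lemma \ref{lemma.extcommute}. The identity $(\usExt^{i}(M,N))_{j} \cong \sExt^{i}(e_{j}M,N)$ recalled at the start of Section \ref{section.intext} immediately translates the left-hand side into $\sExt^{i}(e_{j}(A_{\geq n}/A_{\geq n+1}),-)$, so the real task is to realize $e_{j}(A_{\geq n}/A_{\geq n+1})$ as an input of the form $F\otimes_{A_{ii}}e_{i}M$ to which Lemma \ref{lemma.extcommute} applies.

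Inspection of the bimodule $A_{\geq n}/A_{\geq n+1}$ shows that its $(l,m)$-component is $A_{l,m}$ when $m-l=n$ and vanishes otherwise, so $e_{j}(A_{\geq n}/A_{\geq n+1})$ is the right $A$-module concentrated in degree $j+n$ with value $A_{j,j+n}$, carrying the natural left $A_{jj}$-action. This suggests taking $F:=A_{j,j+n}$, regarded as an $A_{jj}$-$A_{j+n,j+n}$-bimodule (finite dimensional on both sides because $A$ is Ext-finite, cf.\ Remark \ref{remark.ext}), the column index of Lemma \ref{lemma.extcommute} to be $j+n$, and $M:=A_{0}$. Unwinding the definition, the bimodule $F\otimes_{A_{j+n,j+n}}e_{j+n}A_{0}$ lives entirely in row $j$, with $(j,m)$-component $A_{j,j+n}\otimes_{A_{j+n,j+n}}(A_{0})_{j+n,m}$; since $(A_{0})_{j+n,m}$ equals $A_{j+n,j+n}$ when $m=j+n$ and is zero otherwise, this tensor collapses to $A_{j,j+n}$ in position $(j,j+n)$ and to zero elsewhere. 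The multiplication map of $A$ then furnishes the desired bimodule isomorphism with $e_{j}(A_{\geq n}/A_{\geq n+1})$.

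Substituting these choices into Lemma \ref{lemma.extcommute} produces precisely the isomorphism in the statement, with $F^{*}=A_{j,j+n}^{*}$ and the tensor taken over $A_{j+n,j+n}$. The main---and really only---obstacle is keeping the two separate indexing layers straight (the row/column indices parametrizing the bimodule structure on one hand, and the homological Ext-degree superscript on the other), and verifying that the Ext-finiteness hypothesis legitimately supplies the finite-dimensional bimodule $F$ required to invoke the lemma.
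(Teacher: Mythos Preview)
Your proof is correct and follows essentially the same route as the paper's own argument: both reduce to $\sExt^{i}(e_{j}(A_{\geq n}/A_{\geq n+1}),-)$ via the identity from Section~\ref{section.intext}, identify $e_{j}(A_{\geq n}/A_{\geq n+1})$ with $A_{j,j+n}\otimes_{A_{j+n,j+n}}e_{j+n}A_{0}$, and then invoke Lemma~\ref{lemma.extcommute} together with the finite-dimensionality supplied by Remark~\ref{remark.ext}. Your version simply spells out the identification of $e_{j}(A_{\geq n}/A_{\geq n+1})$ in slightly more detail than the paper does.
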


\begin{proof}
We have isomorphisms
\begin{align*}
\usExt^{i}(A_{\geq n}/A_{\geq n+1},-)_j
& \cong \sExt^{i}(e_j(A_{\geq n}/A_{\geq n+1}),-)  \\
& \cong \sExt^{i}(A_{j, j+n}\otimes _{A_{j+n, j+n}}e_{j+n}A_0,-)  \\
& \cong \sExt^{i}(e_{j+n}A_0,-) \otimes_{A_{j+n, j+n}} A_{j, j+n}^{*}
\end{align*}
where the last isomorphism is from Lemma \ref{lemma.extcommute}, which applies since $A_{j, j+n}$ is finite dimensional over $A_{jj}$ and $A_{j+n, j+n}$ by our assumption of Ext-finiteness and Remark \ref{remark.ext}.
\end{proof}


Given the description of ${\sf Tors }A$ from Lemma \ref{lemma.serresub}, the following is elementary to check.
\begin{lemma} \label{lemma.torsfunct}
Suppose $A$ is right Ext-finite.  Then there is an isomorphism of functors $\tau(-) \cong \underset{n \to \infty}{\lim} \usHom(A/A_{\geq n}, -)$.
\end{lemma}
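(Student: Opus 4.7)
The plan is to verify the isomorphism componentwise and then upgrade to an isomorphism of graded right $A$-modules that is natural in $M$.

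First, for fixed $i, n \in \mathbb{Z}$ with $n \geq 1$, I would identify the right $A_{ii}$-module $\sHom(e_{i}(A/A_{\geq n}), M)$ with the submodule
$$
S_{i,n}(M) := \{m \in M_{i} : m \cdot A_{ik} = 0 \text{ for all } k \geq i+n\} \subset M_{i}
$$
via the map $f \mapsto f(\overline{e_{i}})$, where $\overline{e_{i}}$ is the class of $e_{i}$ in $e_{i}(A/A_{\geq n})$. The point is that $e_{i}(A/A_{\geq n})$ is generated as a right $A$-module by $\overline{e_{i}}$, and the only relations are $\overline{e_{i}} \cdot a = 0$ for $a \in (e_{i}A)_{\geq i+n}$; so a right $A$-module homomorphism to $M$ is determined by the image of $\overline{e_{i}}$, which may be any element of $M_{i}$ annihilating those $a$.

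Second, I would check the transition and multiplicative compatibilities. The canonical surjection $A/A_{\geq n+1} \twoheadrightarrow A/A_{\geq n}$ induces an injection $\sHom(e_{i}(A/A_{\geq n}),M) \hookrightarrow \sHom(e_{i}(A/A_{\geq n+1}),M)$ which, under the above identifications, is just the inclusion $S_{i,n}(M) \subset S_{i,n+1}(M)$ inside $M_{i}$. For the right $A$-module structure on $\usHom(A/A_{\geq n}, M)$ (which, per Section \ref{section.inthom}, sends $f \in \sHom(e_{i}(A/A_{\geq n}),M)$ and $a \in A_{ij}$ to $f \cdot a = f \circ (a \cdot -) \in \sHom(e_{j}(A/A_{\geq n}), M)$), a quick computation gives $(f \cdot a)(\overline{e_{j}}) = f(\overline{a}) = f(\overline{e_{i}}) \cdot a$, so the assembled identification $\usHom(A/A_{\geq n}, M) \cong \bigoplus_{i} S_{i,n}(M)$ is one of graded right $A$-submodules of $M$.

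Third, passing to the direct limit, the $i$th component becomes $\bigcup_{n} S_{i,n}(M) = \{m \in M_{i} : m \cdot A_{ik} = 0 \text{ for all sufficiently large } k\}$. This is exactly $\tau(M)_{i}$: for $m \in M_{i}$, the right $A$-submodule $mA$ has degree-$k$ piece $m \cdot A_{ik}$, so $mA$ is right bounded if and only if $m \cdot A_{ik} = 0$ for $k \gg 0$. Naturality in $M$ is immediate since all the identifications are built from evaluation at $\overline{e_{i}}$, which is functorial.

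I do not expect a serious obstacle here; the content is bookkeeping. The only subtle step is checking that the right $A$-action on $\usHom(A/A_{\geq n},M)$ translates correctly to the submodule structure of $M$, since the right action on $\usHom$ comes from left multiplication on the first argument and shifts the grading index from $i$ to $j$ via $A_{ij}$. Once this is confirmed by the direct calculation above, the rest of the proof amounts to recognizing the direct limit of the $S_{i,n}(M)$ as the torsion submodule, which uses nothing beyond the definitions (right Ext-finiteness is invoked only implicitly, to guarantee via Lemma \ref{lemma.serresub} that $\tau$ is a well-defined functor).
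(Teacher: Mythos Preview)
Your proof is correct and is precisely the elementary verification the paper has in mind; the paper itself gives no argument beyond declaring the result ``elementary to check'' given Lemma~\ref{lemma.serresub}. Your explicit identification of $\sHom(e_i(A/A_{\geq n}),M)$ with the annihilator submodule $S_{i,n}(M)\subset M_i$, together with the check that the right $A$-action matches, is exactly what is needed, and your remark that right Ext-finiteness enters only to ensure $\tau$ is well defined is accurate.
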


\begin{lemma} \label{lemma.dlims}
Suppose $A$ is right Ext-finite.  For $i \geq 0$, the functor $\operatorname{R}^{i}\tau$ commutes with direct limits.
\end{lemma}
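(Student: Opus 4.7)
The plan is to reduce the claim to statements about $\usExt^{i}(A/A_{\geq n},-)$ by first establishing an isomorphism
\[
\operatorname{R}^{i}\tau(-) \;\cong\; \underset{n\to\infty}{\lim}\, \usExt^{i}(A/A_{\geq n},-),
\]
and then showing that each functor $\usExt^{i}(A/A_{\geq n},-)$ commutes with direct limits.  The conclusion for $\operatorname{R}^{i}\tau$ then follows because direct limits commute with one another.

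For the first step, I would invoke a universal $\delta$-functor argument.  Since filtered direct limits are exact in the Grothendieck category ${\sf Gr}\,A$, taking the direct limit over $n$ of the long exact sequences associated to each $\usExt^{\bullet}(A/A_{\geq n},-)$ produces a cohomological $\delta$-functor.  It agrees with $\tau$ in degree zero by Lemma \ref{lemma.torsfunct}, and it vanishes on injectives in positive degrees since each $\usExt^{i}(A/A_{\geq n},J)=0$ for $i>0$ whenever $J$ is injective.  Hence this $\delta$-functor is universal and must coincide with $(\operatorname{R}^{i}\tau)_{i\geq 0}$.

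For the second step, I would argue componentwise: the $j$-th component of $\usExt^{i}(A/A_{\geq n},-)$ is $\sExt^{i}(e_{j}(A/A_{\geq n}),-)$, and evaluation at degree $j$ commutes with direct limits.  Lemma \ref{lemma.resshape} supplies a resolution $F^{\bullet}\to e_{j}(A/A_{\geq n})$ by finitely generated free right $A$-modules.  Using the fact that $\sExt$ is balanced (a standard double-complex argument, valid because ${\sf Gr}\,A$ has enough projectives as well as enough injectives), we may compute $\sExt^{i}(e_{j}(A/A_{\geq n}),-)\cong H^{i}(\sHom(F^{\bullet},-))$.  For $F=\bigoplus_{k=1}^{m}e_{j_{k}}A$ finitely generated and free, $\sHom(F,-)\cong\bigoplus_{k=1}^{m}(-)_{j_{k}}$ is a finite direct sum of evaluation-at-degree functors, each of which commutes with direct limits; since cohomology preserves filtered colimits in a Grothendieck category, $\sExt^{i}(e_{j}(A/A_{\geq n}),-)$ does as well.

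The main obstacle I anticipate is in Step 2, namely the need to justify that $\sExt$ is balanced so that the finitely generated free resolutions furnished by Lemma \ref{lemma.resshape} can be used to compute Ext.  Once balance is in hand, combining the two steps via
\[
\operatorname{R}^{i}\tau(\underset{\alpha}{\lim}\, M_{\alpha}) \;\cong\; \underset{n}{\lim}\,\underset{\alpha}{\lim}\, \usExt^{i}(A/A_{\geq n},M_{\alpha}) \;\cong\; \underset{\alpha}{\lim}\,\operatorname{R}^{i}\tau(M_{\alpha})
\]
yields the conclusion, the interchange of limits being legitimate because direct limits commute with direct limits.
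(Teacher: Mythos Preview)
Your proposal is correct and follows essentially the same approach as the paper: reduce to showing that $\varinjlim_n\sExt^{i}(e_j(A/A_{\geq n}),-)$ commutes with direct limits, invoke Lemma~\ref{lemma.resshape} to obtain a finitely generated free resolution of $e_j(A/A_{\geq n})$, and then use balance of $\sExt$ so that these Ext groups are computed by applying $\sHom$ to that resolution (the paper cites \cite[Proposition~8.2]{lang} for this balance, resolving the obstacle you flagged). The paper's argument is terser; it leaves your Step~1 (the identification $\operatorname{R}^{i}\tau\cong\varinjlim_n\usExt^{i}(A/A_{\geq n},-)$) implicit, so your explicit universal $\delta$-functor justification is a welcome clarification rather than a departure.
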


\begin{proof}
By Lemma \ref{lemma.resshape}, $e_{j}A/e_{j}(A_{\geq n})$ has a resolution whose terms are free and finitely generated.  By \cite[Proposition 8.2, p. 809]{lang}, $\sExt^{i}(e_{j}A/e_{j}(A_{\geq n}),-)$ can be computed as the $i$th cohomology of $\sHom(-,-)$ applied to this resolution.  Therefore, the functor
$$
\dlim \sExt^{i}(e_{j}A/e_{j}(A_{\geq n}),-)
$$
commutes with direct limits, as desired.
\end{proof}

\begin{lemma} \label{lemma.torsvanish}
Suppose $A$ is right Ext-finite.  If $T \in {\sf Gr }A$ is a torsion module, then $\mbox{R}^{i}\tau(T)=0$ for $i>0$.
\end{lemma}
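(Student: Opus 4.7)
The plan is to reduce to the case of a simple torsion module via two standard reductions and then verify vanishing directly using minimal free resolutions. Since ${\sf Tors }A$ is closed under subobjects (Lemma \ref{lemma.serresub}), every torsion module is the directed union of its finitely generated torsion submodules, and Lemma \ref{lemma.dlims} lets us pass to the finitely generated case.

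A finitely generated torsion module $T$ is bounded on both sides---left-bounded from finite generation, right-bounded because each of finitely many generators has a right-bounded orbit. By Remark \ref{remark.ext}, each $T_{l}$ is finite-dimensional over the division ring $A_{ll}$, so $T$ has a finite composition series with simple subquotients necessarily of the form $S_{m}=A_{mm}$ concentrated in a single degree $m$. Inducting along this filtration using the long exact sequence for $\mathrm{R}^{\bullet}\tau$ further reduces us to proving $\mathrm{R}^{i}\tau(S_{m})=0$ for $i\geq 1$ and every $m$.

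Applying $\tau\cong\dlim\usHom(A/A_{\geq n},-)$ from Lemma \ref{lemma.torsfunct} to an injective resolution of $S_{m}$ and commuting cohomology with the exact direct limit gives
$$
\mathrm{R}^{i}\tau(S_{m})_{j}\;\cong\;\dlim\sExt^{i}\bigl(e_{j}(A/A_{\geq n}),S_{m}\bigr).
$$
For each $n$, Lemma \ref{lemma.resshape} combined with Proposition \ref{prop.min} provides a minimal finitely generated free resolution $F^{(n)}_{\bullet}\to e_{j}(A/A_{\geq n})$ with $F^{(n)}_{0}=e_{j}A$. Since the kernel $e_{j}A_{\geq n}$ of $F^{(n)}_{0}\to e_{j}(A/A_{\geq n})$ is left-bounded by $j+n$, an inductive use of minimality (the image of $F^{(n)}_{i}\to F^{(n)}_{i-1}$ lies in $F^{(n)}_{i-1}A_{\geq 1}$) forces $F^{(n)}_{i}$ to be generated in degrees $\geq j+n+i-1$ for $i\geq 1$. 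Since $\sHom(e_{l}A,S_{m})$ vanishes unless $l=m$, the group $\sExt^{i}(e_{j}(A/A_{\geq n}),S_{m})$ is zero whenever $n\geq m-j-i+2$, and hence the direct limit vanishes.

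The main obstacle is the combinatorial degree-bookkeeping in the final step, along with the preliminary justification that $\mathrm{R}^{i}\tau\cong\dlim\usExt^{i}(A/A_{\geq n},-)$ (which follows by applying $\tau$ to an injective resolution and using that direct limits are exact).
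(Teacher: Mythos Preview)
Your argument is correct. The paper's own proof makes the same first move---writing $T$ as a directed union of right-bounded torsion modules and invoking Lemma~\ref{lemma.dlims}---but then simply cites \cite[Lemma 6.4(3)]{abstract} for the right-bounded case, whereas you unpack that case explicitly via a composition series and a degree-bound computation on minimal free resolutions. Your route is thus a self-contained version of the same strategy; the only extraneous ingredient is the appeal to Lemma~\ref{lemma.resshape} for finite generation of the resolution terms, which your degree argument does not actually use (Proposition~\ref{prop.min} alone suffices). One small point worth making explicit: when you compute $\sExt^{i}(e_{j}(A/A_{\geq n}),S_{m})$ from a free resolution of the first variable rather than an injective resolution of the second, you are using the balancing of $\sExt$, which the paper invokes (via \cite[Proposition~8.2]{lang}) in the proof of Lemma~\ref{lemma.dlims}.
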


\begin{proof}
By Lemma \ref{lemma.dlims}, we may assume, without loss of generality, that $T$ is right-bounded by degree $r$.  The result now follows from \cite[Lemma 6.4(3)]{abstract}.
\end{proof}

\section{Local Duality} \label{section.ld}
Our goal in this section is to prove a version of local duality for connected right Ext-finite $\mathbb{Z}$-algebras.  As many of the proofs in this section are similar to their $\mathbb{Z}$-graded counterparts, we will rely heavily on proofs already in the literature, while carefully describing the differences that arise.

Throughout this section, unless otherwise stated, we let $A$, $B$, $C$ and $D$ be $\mathbb{Z}$-algebras.  If ${\sf C}$ is an abelian category, we let ${\sf K}({\sf C})$ denote the homotopic category of ${\sf C}$, we let ${\sf K}^{+}({\sf C})$ (resp. ${\sf K}^{-}({\sf C})$) denote the full subcategory of ${\sf K}({\sf C})$ consisting of bounded below (resp. bounded above) complexes. We let ${\sf D}({\sf C})$ denote the derived category of ${\sf C}$, and we let ${\sf D}^{+}({\sf C})$ (resp. ${\sf D}^{-}({\sf C})$) denote the full subcategory of ${\sf D}({\sf C})$ consisting of bounded below (resp. bounded above) complexes.  Finally, we let ${\sf D}^{b}({\sf C})$ denote the full subcategory of ${\sf D}({\sf C})$ consisting of bounded complexes.

Suppose $M^{\bullet}$ is a complex of $A-B$-bimodules, $N^{\bullet}$ is a complex of $B-C$-bimodules, and $P^{\bullet}$ is a complex of $D-C$-bimodules.  We define $\operatorname{Tot}(M^{\bullet} \intotimesb N^{\bullet})$ as in \cite[Definition 12.23.3]{stacks} or, equivalently \cite[p. 51]{amnon} and $\usHomc^{\bullet}(N^{\bullet},P^{\bullet})$ as in \cite[Section 15.67]{stacks} or, equivalently \cite[p. 49]{amnon}.  In light of Proposition \ref{prop.intadjoint}(2), the proof of the following proposition is completely analogous to the proof of \cite[Lemma 15.67.1]{stacks} and we omit the details.

\begin{proposition} \label{prop.adjointcomplex}
There is a natural isomorphism of complexes
$$
\usHomc^{\bullet}(\operatorname{Tot}(M^{\bullet} \intotimesb N^{\bullet}),P^{\bullet}) \overset{\cong}{\longrightarrow} \usHomb^{\bullet}(M^{\bullet}, \usHomc^{\bullet}(N^{\bullet},P^{\bullet})).
$$
\end{proposition}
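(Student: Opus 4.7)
The plan is to reduce the statement to an application of Proposition \ref{prop.intadjoint}(2) in each bidegree, and then verify that the resulting componentwise isomorphism respects the differentials of the total complex and the two Hom complexes. Unwinding the definitions of $\usHomc^{\bullet}$ and $\operatorname{Tot}$, the degree $n$ part of the left-hand side is
$$\prod_{i} \usHomc\Bigl(\bigoplus_{p+q=i} M^{p} \intotimesb N^{q}, \, P^{n+i}\Bigr) \;\cong\; \prod_{p,q \in \mathbb{Z}} \usHomc(M^{p} \intotimesb N^{q}, P^{n+p+q}),$$
while the degree $n$ part of the right-hand side is
$$\prod_{p} \usHomb\Bigl(M^{p}, \prod_{q} \usHomc(N^{q}, P^{n+p+q})\Bigr) \;\cong\; \prod_{p,q \in \mathbb{Z}} \usHomb(M^{p}, \usHomc(N^{q}, P^{n+p+q})).$$
In each bidegree $(p,q)$, Proposition \ref{prop.intadjoint}(2) furnishes a natural bimodule isomorphism $\Psi_{p,q}$ between the corresponding factors. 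Taking the product of these over all $(p,q)$ gives a candidate map $\Phi^{n}$ in every degree $n$, which is a bimodule isomorphism by construction.

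Second, I would check that the maps $\{\Phi^{n}\}$ assemble into a morphism of complexes. Recall that the differential on $\operatorname{Tot}(M^{\bullet}\intotimesb N^{\bullet})$ restricted to $M^{p}\intotimesb N^{q}$ is $d_{M}\intotimesb 1 + (-1)^{p}(1\intotimesb d_{N})$, and the differential on an internal $\usHom$-complex is given by postcomposition with the differential of the target minus, up to Koszul sign, precomposition with the differential of the source. Thus $d\circ \Phi^{n} - \Phi^{n+1}\circ d$ decomposes into three pieces, one coming from each of $d_{M}$, $d_{N}$, $d_{P}$, and the vanishing of each piece is exactly the naturality of $\Psi_{p,q}$ in the corresponding variable. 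Naturality of the resulting isomorphism in all three of $M^{\bullet}, N^{\bullet}, P^{\bullet}$ is then inherited directly from the naturality built into Proposition \ref{prop.intadjoint}(2).

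The only real obstacle I anticipate is the bookkeeping of Koszul signs, which is precisely the routine mechanical step the authors bypass by citing the corresponding Stacks Project lemma. No new homological input specific to $\mathbb{Z}$-algebras is required: once the bidegree-wise adjunction of Proposition \ref{prop.intadjoint}(2) is in place, the argument proceeds formally, just as in the classical graded or module-theoretic settings, and the proof reduces to a careful but elementary diagram chase in the bimodule categories ${\sf Bimod}(A-B)$, ${\sf Bimod}(B-C)$, and ${\sf Bimod}(D-C)$.
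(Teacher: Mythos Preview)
Your proposal is correct and follows essentially the same approach as the paper: the authors simply note that, in light of Proposition \ref{prop.intadjoint}(2), the argument is completely analogous to \cite[Lemma 15.67.1]{stacks} and omit the details. What you have written is precisely the unwinding of that standard argument in the $\mathbb{Z}$-algebra setting.
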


\begin{cor} \label{cor.D}
There is a natural isomorphism of complexes of $C-A$-bimodules
$$
\usHomb^{\bullet}(M^{\bullet},D(N^{\bullet})) \cong D(\operatorname{Tot}(M^{\bullet} \intotimes N^{\bullet})).
$$
\end{cor}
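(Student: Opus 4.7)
The plan is to derive the complex-level statement as a formal consequence of the module-level Proposition \ref{prop.duality}, replacing the use of Proposition \ref{prop.intadjoint}(2) in that proof by its complex analogue Proposition \ref{prop.adjointcomplex}. In other words, I will simply upgrade the three-step chain of isomorphisms used to prove Proposition \ref{prop.duality} to the setting of totalizations of double complexes.

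Concretely, I would unwind $D$ on both sides according to its definition $D(-) = \usHomk({}_{?}\operatorname{Res}(-), K)$ and write
\begin{align*}
\usHomb^{\bullet}(M^{\bullet}, D(N^{\bullet}))
&= \usHomb^{\bullet}\bigl(M^{\bullet}, \usHomk^{\bullet}({}_B\operatorname{Res} N^{\bullet}, K)\bigr) \\
&\cong \usHomk^{\bullet}\bigl(\operatorname{Tot}(M^{\bullet} \intotimesb {}_B\operatorname{Res} N^{\bullet}), K\bigr) \\
&\cong \usHomk^{\bullet}\bigl({}_A\operatorname{Res}\operatorname{Tot}(M^{\bullet} \intotimesb N^{\bullet}), K\bigr) \\
&= D\bigl(\operatorname{Tot}(M^{\bullet} \intotimesb N^{\bullet})\bigr).
\end{align*}
The first isomorphism is Proposition \ref{prop.adjointcomplex} applied with $P^{\bullet} = K$ concentrated in degree zero and with the role of $C$ in that statement played by $K$. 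The second isomorphism comes from Lemma \ref{lemma.restensor} applied in each bidegree together with the fact that restriction commutes with the finite direct sums used to form $\operatorname{Tot}$.

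The remaining check is that this chain, which a priori gives only an isomorphism of $K$-$A$-bimodule complexes, respects the left $C$-module structures introduced by $D$ on either side; this is the complex version of the bimodule refinement in Proposition \ref{prop.duality}. It follows by applying that proposition termwise to $M^p$ and $N^q$, and compatibility with the differentials of $\operatorname{Tot}$ and $\usHomb^{\bullet}$ is built into Proposition \ref{prop.adjointcomplex}, so no separate sign bookkeeping is required. Naturality in $M^{\bullet}$ and $N^{\bullet}$ is immediate from the naturality of each individual step.

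The principal obstacle I expect is purely notational: carefully matching up the left $C$-action, which lives on $D(N^{\bullet})$ on one side but only appears after $\operatorname{Tot}$ and $D$ on the other, across the bicomplex. Once that bookkeeping is in place, the corollary is essentially a formal consequence of Proposition \ref{prop.adjointcomplex} and Lemma \ref{lemma.restensor}.
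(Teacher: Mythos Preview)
Your proposal is correct and follows essentially the same approach as the paper: both arguments unwind $D$, apply the complex-level adjunction of Proposition \ref{prop.adjointcomplex} with $P^{\bullet}=K$, use Lemma \ref{lemma.restensor} termwise to identify ${}_{A}\operatorname{Res}\operatorname{Tot}(M^{\bullet}\intotimesb N^{\bullet})$ with $\operatorname{Tot}(M^{\bullet}\intotimesb {}_{B}\operatorname{Res}N^{\bullet})$, and then invoke Proposition \ref{prop.duality} to verify compatibility with the left $C$-action. The paper's proof is just a more compressed version of the chain you wrote out.
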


\begin{proof}
Since ${}_{A}\operatorname{Res } \operatorname{Tot }(M^{\bullet} \intotimesb N^{\bullet}) = \operatorname{Tot }(M^{\bullet} \intotimesb {}_{B}\operatorname{Res }N^{\bullet})$, it suffices to show that the adjoint isomorphism from Proposition \ref{prop.adjointcomplex}
$$
\usHomb^{\bullet}(M^{\bullet}, \usHomk^{\bullet}({}_{B}\operatorname{Res }N^{\bullet}, K)) \cong \usHomk^{\bullet}(\operatorname{Tot}(M^{\bullet} \intotimesb {}_{B} \operatorname{Res }N^{\bullet}), K)
$$
is an isomorphism of complexes of left $C$-modules.  This follows from Proposition \ref{prop.duality} in light of the explicit form of the isomorphism from Proposition \ref{prop.adjointcomplex}.
\end{proof}

Next, we work towards definitions of derived functors of internal tensor and hom functors.  The proof of the following is analogous to the proof of \cite[Lemma 21]{risingsea} and is omitted.
\begin{lemma} \label{lemma.homexist0}
The functor
$$
\usHomb(-,-):({{\sf Bimod }(A-B)})^{op} \times {\sf Bimod }(C-B) \rightarrow {\sf Bimod}(C-A)
$$
descends to a a bi-$\delta$-functor
$$
\usHomb^{\bullet}(-,-):{\sf K}({{\sf Bimod }(A-B)})^{op} \times {\sf K}({\sf Bimod }(C-B)) \rightarrow {\sf K}({\sf Bimod}(C-A)).
$$
\end{lemma}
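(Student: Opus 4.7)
The plan is to adapt the standard construction of the Hom-complex and its homotopical properties from the setting of complexes of modules (as in \cite[Lemma 21]{risingsea}) to complexes of bimodules over $\mathbb{Z}$-algebras. First, I would define the complex $\usHomb^{\bullet}(M^{\bullet}, N^{\bullet})$ in the usual way: its degree $n$ component is $\prod_{p \in \mathbb{Z}} \usHomb(M^{p}, N^{p+n})$, and the differential sends a family $(f^{p})$ to the family with $p$th entry $d_{N} \circ f^{p} - (-1)^{n}\, f^{p+1} \circ d_{M}$. Since $\usHomb(-,-)$ is a bifunctor on ${\sf Bimod}(A-B)^{op}\times{\sf Bimod}(C-B)$ with values in ${\sf Bimod}(C-A)$ (see Section \ref{section.inthom}), this construction descends componentwise to yield a bifunctor on the categories of chain complexes.

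Second, I would check that chain homotopic morphisms in either input produce chain homotopic morphisms on the Hom-complex, so that the bifunctor factors through the homotopy categories. Given a null-homotopy $h: M^{\bullet} \to N^{\bullet}[-1]$ (or a homotopy in the second variable), the explicit formula for the induced null-homotopy on $\usHomb^{\bullet}(-,-)$ is the same as in the module case and uses only that $\usHomb(-,-)$ is additive in each variable; this is essentially formal and will not require any input beyond what is developed in Section \ref{section.inthom}.

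Third, I would verify the bi-$\delta$-functor property. By standard triangulated category theory, it suffices to show that for every morphism $f: M^{\bullet} \to M'^{\bullet}$ in ${\sf K}({\sf Bimod}(A-B))$ and every fixed $N^{\bullet}$ in ${\sf K}({\sf Bimod}(C-B))$, the map $\usHomb^{\bullet}(f, N^{\bullet})$ fits into a distinguished triangle isomorphic to the one coming from the mapping cone of $f$, and symmetrically in the second variable. This reduces to producing a canonical isomorphism of complexes between $\usHomb^{\bullet}(\operatorname{Cone}(f), N^{\bullet})$ and a shifted mapping cone of $\usHomb^{\bullet}(f, N^{\bullet})$, which is done by writing both sides out componentwise and matching the differentials with appropriate signs.

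I do not expect a genuine obstacle here; the main care required is in bookkeeping the signs and keeping track of the fact that the first variable is contravariant. The essential point is that all of the needed manipulations take place inside the additive categories of bimodules, where Section \ref{section.inthom} has already recorded that $\usHomb(-,-)$ is left exact in each variable and behaves well under the ambient additive structure, so no new structural fact about $\mathbb{Z}$-algebras is required beyond what has already been established.
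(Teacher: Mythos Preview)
Your proposal is correct and follows exactly the approach the paper intends: the paper simply states that the proof is analogous to that of \cite[Lemma 21]{risingsea} and omits it, and what you have sketched is precisely that argument carried out in the bimodule setting. No further justification is needed.
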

We now adapt \cite[Theorem 2.2]{amnon} to our context.  To do so, we note the next result, which follows from Lemma \ref{lemma.restriction}.
\begin{lemma} \label{lemma.restriction2}
There is an equality
$$
\operatorname{Res}_{k}(\usHomb^{\bullet}(M^{\bullet},N^{\bullet}))=
\usHomb^{\bullet}(\operatorname{Res}_{B}M^{\bullet},\operatorname{Res}_{B}N^{\bullet}).
$$
\end{lemma}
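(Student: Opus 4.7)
The plan is to reduce the claim to Lemma \ref{lemma.restriction} applied termwise. Recall that, by the definition in \cite[Section 15.67]{stacks}, $\usHomb^{\bullet}(M^{\bullet},N^{\bullet})$ is the total complex of the double complex $\mathcal{H}^{p,q} := \usHomb(M^{-p},N^{q})$, with horizontal and vertical differentials given by $(-)\circ d_M^{-p-1}$ (up to sign) and $d_N^q\circ(-)$. The relevant direct sums in the totalization are formed in ${\sf Bimod}(C-A)$ on the left-hand side of the claimed equality, and in ${\sf Bimod}(K-K)$ on the right-hand side.

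First I would apply Lemma \ref{lemma.restriction} to each pair $(M^{-p},N^q)$, obtaining the equality
$$\operatorname{Res}_{k}\,\usHomb(M^{-p},N^q) \;=\; \usHomb(\operatorname{Res}_B M^{-p},\operatorname{Res}_B N^q)$$
of $K$-$K$-bimodules at every bidegree $(p,q)$. Next I would observe that this equality is natural in both variables: the restriction functor $\operatorname{Res}_k$ simply forgets the left $C$-action and right $A$-action, without altering the underlying additive structure of morphism groups, so for any $B$-linear map $f:N^q\to N^{q+1}$ and $B$-linear map $g:M^{-p-1}\to M^{-p}$, the induced morphisms $f_*$ and $g^*$ are literally the same underlying additive maps after restriction. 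Hence the equality at each bidegree is compatible with both the horizontal and vertical differentials of the double complex.

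Finally, since $\operatorname{Res}_k$ is exact and preserves arbitrary direct sums (it is the composition of the forgetful functors ${}_{A}\operatorname{Res}$ and $\operatorname{Res}_{B}$ followed by the natural identification, all of which preserve the underlying abelian-group structure), it commutes with the formation of the total complex. Therefore applying $\operatorname{Res}_k$ to $\operatorname{Tot}(\mathcal{H}^{\bullet,\bullet})$ yields the total complex of the bidegree-wise restrictions, which by the previous step is exactly $\usHomb^{\bullet}(\operatorname{Res}_B M^{\bullet},\operatorname{Res}_B N^{\bullet})$. I do not anticipate any genuine obstacle here; the only point that requires care is verifying that the sign conventions used in forming the total complex in \cite[p.~49]{amnon} agree on both sides, which is automatic because the signs depend only on the homological indices and not on the bimodule structure that is being restricted.
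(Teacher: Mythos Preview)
Your proof is correct and is essentially the same as the paper's own argument, which simply records that the result follows from Lemma~\ref{lemma.restriction}. You have merely spelled out the routine details---termwise application of that lemma, naturality ensuring compatibility with the differentials, and the fact that $\operatorname{Res}_{k}$ preserves direct sums and hence commutes with totalization---that the paper leaves implicit.
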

As in the proof of \cite[Theorem 2.2]{amnon}, we will need the following version of \cite[I, Lemma 6.2]{hartshorne}.
\begin{lemma} \label{lemma.hartshorne}
Let ${\sf L}$ denote the full subcategory of ${\sf K}^{+}({\sf Bimod }(C-B))$ consisting of complexes which are isomorphic, in ${\sf K}^{+}({\sf Bimod }(C-B))$, to complexes of $B$-injectives.  Suppose $M^{\bullet}$ is in ${\sf K}({\sf Bimod }(A-B))$ and $N^{\bullet} \in {\sf L}$.  If either $M^{\bullet}$ or $N^{\bullet}$ is acyclic, then $\usHomb^{\bullet}(M^{\bullet},N^{\bullet})$ is acyclic.
\end{lemma}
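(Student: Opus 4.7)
The plan is to reduce the assertion to the classical result that a bounded below complex of injectives in a module category is $K$-injective, applied to the category ${\sf Gr}B$ of graded right $B$-modules. The bridge is the restriction functor $\operatorname{Res}_{k}$, combined with the freedom to replace $N^{\bullet}$ by an honest complex of $B$-injectives.

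\emph{Reduction to right $B$-modules.} The forgetful functor $\operatorname{Res}_{k}: {\sf Bimod}(C-A) \to {\sf Bimod}(K-K)$ is exact and detects acyclicity, since it does not alter the underlying graded abelian group of an object. By Lemma \ref{lemma.restriction2},
$$
\operatorname{Res}_{k}\bigl(\usHomb^{\bullet}(M^{\bullet},N^{\bullet})\bigr) = \usHomb^{\bullet}\bigl(\operatorname{Res}_{B}M^{\bullet},\operatorname{Res}_{B}N^{\bullet}\bigr),
$$
so it suffices to prove that the right-hand side is acyclic. This reduces the entire question to a statement inside ${\sf Gr}B$.

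\emph{Replacement by a complex of injectives.} By the definition of ${\sf L}$ there exists a homotopy equivalence $N^{\bullet} \simeq J^{\bullet}$ in ${\sf K}^{+}({\sf Bimod}(C-B))$ with every $J^{n}$ a $B$-injective; equivalently, $\operatorname{Res}_{B}J^{n}$ is an injective object of ${\sf Gr}B$ for each $n$. The internal Hom-complex functor carries homotopy equivalences in each variable to homotopy equivalences --- a standard formal property of the Hom-complex in any abelian category --- so I may replace $N^{\bullet}$ by $J^{\bullet}$ without changing the acyclicity of the Hom complex, and if $N^{\bullet}$ was acyclic then so is $J^{\bullet}$. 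After this step we have a bounded below complex $\operatorname{Res}_{B}J^{\bullet}$ of injectives in ${\sf Gr}B$ paired against an arbitrary complex $\operatorname{Res}_{B}M^{\bullet}$ in ${\sf K}({\sf Gr}B)$.

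\emph{Invocation of the classical result.} If $M^{\bullet}$ is acyclic, then acyclicity of $\usHomb^{\bullet}(\operatorname{Res}_{B}M^{\bullet},\operatorname{Res}_{B}J^{\bullet})$ is precisely the $K$-injectivity of bounded below complexes of injectives, i.e.\ the graded-module version of \cite[I, Lemma 6.2]{hartshorne}. If instead $N^{\bullet}$, and hence $\operatorname{Res}_{B}J^{\bullet}$, is acyclic, then a bounded below acyclic complex of injectives is contractible, so $\usHomb^{\bullet}(\operatorname{Res}_{B}M^{\bullet}, \operatorname{Res}_{B}J^{\bullet})$ is itself contractible and therefore acyclic. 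Combining with the first step yields the conclusion.

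\emph{Main obstacle.} There is no serious conceptual obstacle: the argument is a formal reduction followed by a citation. The one point that must be handled carefully is that ${\sf L}$ is defined up to homotopy equivalence and not as an actual complex of $B$-injectives, so one must check that $\usHomb^{\bullet}$ respects homotopy in each variable; this is a general property of the Hom-complex functor and poses no real difficulty in our setting.
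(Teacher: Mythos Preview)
Your argument follows the same strategy as the paper's proof: reduce via Lemma~\ref{lemma.restriction2} and then invoke \cite[I, Lemma~6.2]{hartshorne}. There is one imprecision worth fixing: $\operatorname{Res}_B$ lands in ${\sf Bimod}(K-B)$, not in ${\sf Gr}B$, and the object whose acyclicity you must establish is the \emph{internal} Hom complex $\usHomb^{\bullet}$ (with values in ${\sf Bimod}(K-K)$), which is not the categorical Hom in either category, so Hartshorne's lemma does not apply to it as written. The paper closes this small gap by observing that a complex of $K$--$K$ bimodules is acyclic if and only if it is acyclic in each bidegree $(i,j)$, where it becomes $\operatorname{Hom}_{{\sf Gr}B}^{\bullet}(e_jM^{\bullet},e_iN^{\bullet})$, and then invoking Lemma~\ref{lemma.lrinjective} (with $A=K$) to see that each row $e_iN^q$ is injective in ${\sf Gr}B$; at that point \cite[I, Lemma~6.2]{hartshorne} applies verbatim.
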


\begin{proof}
By Lemma \ref{lemma.restriction2}, it suffices to show that, under the hypotheses,
$$
\usHomb^{\bullet}(\operatorname{Res}_{B}M^{\bullet},\operatorname{Res}_{B}N^{\bullet})
$$
is acyclic.  This is a complex of bigraded bimodules, which is acyclic if and only if it is acyclic in each bidegree.  Therefore, the result follows from Lemma \ref{lemma.lrinjective} and \cite[I, Lemma 6.2]{hartshorne}.
\end{proof}

Our adaptation of \cite[Theorem 2.2]{amnon} follows.
\begin{proposition} \label{prop.homexist}
\begin{enumerate}
\item{}
The functor $\usHomb^{\bullet}(-,-)$ from Lemma \ref{lemma.homexist0} has a derived functor
$$
\operatorname{R}\usHomb^{\bullet}(-,-):{\sf D}({{\sf Bimod }(A-B)})^{op} \times {\sf D}^{+}({\sf Bimod }(C-B)) \rightarrow {\sf D}({\sf Bimod}(C-A)).
$$
Furthermore, when $N^{\bullet} \in {\sf D}^{+}({\sf Bimod }(C-B))$ is a complex of $B$-injectives, then
$$
\operatorname{R}\usHomb^{\bullet}(M^{\bullet},N^{\bullet}) \cong \usHomb^{\bullet}(M^{\bullet},N^{\bullet})
$$
for all $M^{\bullet}$ in ${\sf D}({{\sf Bimod }(A-B)})^{op}$.

\item{}
The functor $\usHomb^{\bullet}(-,-)$ also has a derived functor
$$
\operatorname{R}\usHomb^{\bullet}(-,-):{\sf D}^{-}({{\sf Bimod }(A-B)})^{op} \times {\sf D}({\sf Bimod }(C-B)) \rightarrow {\sf D}({\sf Bimod}(C-A)).
$$
When $M^{\bullet} \in {\sf D}^{-}({{\sf Bimod }(A-B)})^{op}$ is a complex of $B$-projectives then
$$
\operatorname{R}\usHomb^{\bullet}(M^{\bullet},N^{\bullet}) \cong \usHomb^{\bullet}(M^{\bullet},N^{\bullet})
$$
for any $N^{\bullet} \in {\sf D}({\sf Bimod }(C-B))$.

\item{} The two derived functors coincide on
$$
{\sf D}^{-}({{\sf Bimod }(A-B)})^{op} \times {\sf D}^{+}({\sf Bimod }(C-B)).
$$
\end{enumerate}

\end{proposition}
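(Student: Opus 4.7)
The plan is to adapt the construction of two-variable derived functors in \cite[Theorem 2.2]{amnon} to the $\mathbb{Z}$-algebra setting; the groundwork has been laid by Lemmas \ref{lemma.lrinjective}, \ref{lemma.bproj}, \ref{lemma.hartshorne} and \ref{lemma.restriction2}, and at each stage the argument reduces, via Lemma \ref{lemma.restriction2}, to Hartshorne's classical machinery for bigraded $K$--$B$-modules.

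For part (1), I would first observe that ${\sf Bimod}(C-B) \equiv {\sf Gr}\,C^{op}\otimes_k B$ is Grothendieck by Proposition \ref{prop.equiv}, hence has enough injectives, and by Lemma \ref{lemma.lrinjective} every injective object is in particular $B$-injective. Consequently every $N^{\bullet} \in {\sf K}^{+}({\sf Bimod}(C-B))$ admits a quasi-isomorphism $N^{\bullet} \to I^{\bullet}$ with $I^{\bullet}$ a bounded-below complex of $B$-injectives, so the class ${\sf L}$ of Lemma \ref{lemma.hartshorne} contains a resolution of every bounded-below complex. I would set $\operatorname{R}\usHomb^{\bullet}(M^{\bullet}, N^{\bullet}) := \usHomb^{\bullet}(M^{\bullet}, I^{\bullet})$; independence of the choice of $I^{\bullet}$ follows from Lemma \ref{lemma.hartshorne} applied to the acyclic cone of any comparison morphism between two such resolutions, and bifunctoriality is inherited from Lemma \ref{lemma.homexist0}. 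The agreement with $\usHomb^{\bullet}(M^{\bullet}, N^{\bullet})$ when $N^{\bullet}$ is itself $B$-injective is then immediate from the construction.

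For part (2), the argument is dual. By Lemma \ref{lemma.bproj}, ${\sf Bimod}(A-B)$ has a generating set of $B$-projectives, so a standard termwise construction produces, for every $M^{\bullet} \in {\sf K}^{-}({\sf Bimod}(A-B))$, a quasi-isomorphism $P^{\bullet} \to M^{\bullet}$ with $P^{\bullet}$ a bounded-above complex of $B$-projectives. To define $\operatorname{R}\usHomb^{\bullet}(M^{\bullet}, N^{\bullet}) := \usHomb^{\bullet}(P^{\bullet}, N^{\bullet})$ unambiguously, the main technical step is the projective analogue of Lemma \ref{lemma.hartshorne}: if $P^{\bullet}$ is an acyclic bounded-above complex of $B$-projectives, then $\usHomb^{\bullet}(P^{\bullet}, N^{\bullet})$ is acyclic for every $N^{\bullet}$. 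I expect this to be the principal obstacle, although it should succumb to the same restriction trick: Lemma \ref{lemma.restriction2} reduces the claim to bigraded $K$--$B$-modules, where it decomposes bidegreewise into the classical fact that $\operatorname{Hom}^{\bullet}(P, -)$ preserves acyclics whenever $P$ is a bounded-above complex of projective right $B$-modules (the projective counterpart of \cite[I, Lemma 6.2]{hartshorne}). Well-definedness and functoriality then follow as in part (1), and the last assertion of (2) is immediate.

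For part (3), given $(M^{\bullet}, N^{\bullet}) \in {\sf D}^{-}({\sf Bimod}(A-B))^{op} \times {\sf D}^{+}({\sf Bimod}(C-B))$, fix resolutions $P^{\bullet} \to M^{\bullet}$ and $N^{\bullet} \to I^{\bullet}$ as in (2) and (1), respectively, and consider the canonical maps
\[
\usHomb^{\bullet}(M^{\bullet}, I^{\bullet}) \longleftarrow \usHomb^{\bullet}(P^{\bullet}, I^{\bullet}) \longrightarrow \usHomb^{\bullet}(P^{\bullet}, N^{\bullet}).
\]
Lemma \ref{lemma.hartshorne}, applied to the acyclic cone of $P^{\bullet} \to M^{\bullet}$ against the $B$-injective target $I^{\bullet}$, shows that the left arrow is a quasi-isomorphism; dually, the projective analogue established in (2), applied to the acyclic cone of $N^{\bullet} \to I^{\bullet}$ against the $B$-projective source $P^{\bullet}$, shows that the right arrow is a quasi-isomorphism. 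Hence the two constructions produce canonically isomorphic objects in ${\sf D}({\sf Bimod}(C-A))$, establishing (3).
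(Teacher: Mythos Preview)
Your proposal is correct and follows essentially the same route as the paper: the paper's own proof consists of a single sentence stating that, in light of Lemma \ref{lemma.lrinjective} and Lemma \ref{lemma.hartshorne}, one may follow the proof of \cite[Theorem 2.2]{amnon}, and you have simply unpacked that reference in detail (additionally invoking Lemma \ref{lemma.bproj} for the projective side, which the paper's terse statement leaves implicit). The only minor slip is that in part (2) you state the projective analogue of Lemma \ref{lemma.hartshorne} only for acyclic $P^{\bullet}$, whereas part (3) also requires the case of acyclic second variable; but that case follows by the identical restriction argument, so this is a presentational wrinkle rather than a gap.
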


\begin{proof}
In light of Lemma \ref{lemma.lrinjective} and Lemma \ref{lemma.hartshorne}, we may follow the proof of \cite[Theorem 2.2]{amnon}.
\end{proof}

\begin{lemma}
The functor
$$
- \intotimesb -:{\sf Bimod }(A-B) \times {\sf Bimod }(B-C) \rightarrow {\sf Bimod } (A-C)
$$
descends to a bi-$\delta$-functor
$$
\operatorname{Tot }(- \intotimesb -):{\sf K}({\sf Bimod }(A-B)) \times {\sf K}({\sf Bimod }(B-C)) \rightarrow {\sf K}({\sf Bimod } (A-C)).
$$
\end{lemma}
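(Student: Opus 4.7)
The plan is to proceed in parallel with the proof of Lemma \ref{lemma.homexist0}, following \cite[Lemma 21]{risingsea} (or \cite[p.~51]{amnon}), only now for tensor product rather than internal Hom. Since the target of $- \intotimesb -$ is an abelian category, and the functor is additive (indeed, right exact) in each variable, the entire argument is formal provided one verifies compatibility with homotopy and with mapping cones.

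First I would define the object $\operatorname{Tot}(M^{\bullet} \intotimesb N^{\bullet})$ on complexes exactly as in the cited references: form the double complex of $A$-$C$-bimodules with $(p,q)$-entry $M^{p} \intotimesb N^{q}$, horizontal differential $d_{M} \intotimesb \mathrm{id}$, and vertical differential $(-1)^{p}\, \mathrm{id} \intotimesb d_{N}$, and pass to its total complex. This construction is functorial on the products of the strict categories ${\sf C}({\sf Bimod}(A-B)) \times {\sf C}({\sf Bimod}(B-C))$.

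Second, I would check that the construction respects homotopy in each variable so that it descends to ${\sf K}({\sf Bimod}(A-B)) \times {\sf K}({\sf Bimod}(B-C))$. If $s$ is a chain homotopy from $f$ to $g$ as maps $M^{\bullet} \to M'^{\bullet}$, then $\operatorname{Tot}(s \intotimesb \mathrm{id}_{N^{\bullet}})$ is a chain homotopy from $\operatorname{Tot}(f \intotimesb \mathrm{id})$ to $\operatorname{Tot}(g \intotimesb \mathrm{id})$; the verification is the standard sign-tracking argument, which uses only additivity of $\intotimesb$ in each variable. The argument in the second variable is symmetric.

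Third, to promote $\operatorname{Tot}(- \intotimesb -)$ to a bi-$\delta$-functor, I would show that it sends a mapping cone sequence in either variable to a distinguished triangle. It suffices to produce a natural isomorphism of complexes
\[
\operatorname{Tot}(\mathrm{Cone}(f) \intotimesb N^{\bullet}) \;\cong\; \mathrm{Cone}\bigl(\operatorname{Tot}(f \intotimesb \mathrm{id}_{N^{\bullet}})\bigr)
\]
for any $f: M^{\bullet} \to M'^{\bullet}$ in ${\sf K}({\sf Bimod}(A-B))$, and symmetrically in the second variable. This is a direct unwinding of definitions: in each fixed total degree, both sides are, termwise, the same direct sum of bimodules of the form $M^{p} \intotimesb N^{q}$ and $M'^{p} \intotimesb N^{q}$, and the differentials agree on the nose once signs are reconciled. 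The isomorphism is clearly natural in $N^{\bullet}$, and triangulated axiom (TR2) then delivers the corresponding statement for the other variable.

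I do not anticipate a real obstacle; the only care needed is bookkeeping with signs and with the direct-sum decomposition of $\operatorname{Tot}$ versus $\mathrm{Cone}$. Nothing in the argument uses any feature of $\mathbb{Z}$-algebras beyond the existence of direct sums and the bi-additivity of $\intotimesb$, both of which hold for ${\sf Bimod}(A-B)$ by inspection. Once the isomorphism above is in hand, the proof that $\operatorname{Tot}(- \intotimesb -)$ is a bi-$\delta$-functor is the same formal yoga as in \cite[Lemma 21]{risingsea}.
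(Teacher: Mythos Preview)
Your proposal is correct and is exactly the standard argument one would expect; the paper in fact omits the proof of this lemma entirely, presumably because it is the tensor-product analogue of Lemma \ref{lemma.homexist0}, whose proof the paper already declares to be ``analogous to the proof of \cite[Lemma 21]{risingsea}.'' Your outline fills in precisely that omitted verification.
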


\begin{prop} \label{prop.tensorexist}
The functor
$$
- \intotimesb -:{\sf Bimod }(A-B) \times {\sf Bimod }(B-C) \rightarrow {\sf Bimod }(A-C)
$$
has a derived functor
$$
-\underline{\otimes}_{B}^{\operatorname{L}}-:{\sf D}^{-}({\sf Bimod }(A-B)) \times {\sf D}^{-}({\sf Bimod }(B-C)) \rightarrow {\sf D}^{-}({\sf Bimod } (A-C)).
$$
If either $M^{\bullet}$ is a complex of $A-B$-bimodules which are $B$-projectives or $N^{\bullet}$ is a complex of $B-C$-bimodules which are $B^{op}$-projectives, then
$$
M^{\bullet} \underline{\otimes}_{B}^{\operatorname{L}}N^{\bullet} \cong \operatorname{Tot }(M^{\bullet} \intotimesb N^{\bullet}).
$$
\end{prop}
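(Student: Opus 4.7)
The plan is to adapt the proof strategy of \cite[Theorem 2.2]{amnon}, which is what was used for Proposition \ref{prop.homexist}, to the derived tensor product setting. The two ingredients required are (i) the existence of suitable resolutions, and (ii) an acyclicity lemma that shows the putative derived functor is well-defined on the derived category.

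First I would check that every bounded-above complex in ${\sf K}^{-}({\sf Bimod }(A-B))$ admits a quasi-isomorphism from a bounded-above complex of $B$-projectives. By Lemma \ref{lemma.bproj}, the set $\{Ae_{i} \otimes_{k} e_{j}B\}_{i,j \in \mathbb{Z}}$ is a set of projective generators of ${\sf Bimod }(A-B)$ consisting of $B$-projectives, so one can build a $B$-projective resolution of any single object, and by the standard Cartan--Eilenberg procedure extend this to a bounded-above complex. Passing to opposite algebras and invoking Proposition \ref{prop.equiv}, the analogous statement produces resolutions by $B^{op}$-projectives in ${\sf Bimod }(B-C)$.

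Next, I would establish the tensor-product analogue of Lemma \ref{lemma.hartshorne}: if $P^{\bullet}$ is a bounded-above acyclic complex of $B$-projectives in ${\sf K}({\sf Bimod }(A-B))$ and $N^{\bullet}$ is any bounded-above complex in ${\sf K}({\sf Bimod }(B-C))$, then $\operatorname{Tot}(P^{\bullet} \intotimesb N^{\bullet})$ is acyclic; and symmetrically for a bounded-above acyclic complex of $B^{op}$-projectives in the second slot. The reduction strategy mirrors that of Lemma \ref{lemma.hartshorne}: applying $\operatorname{Res}_{k}$ commutes with $\operatorname{Tot}$, and by (an iterated use of) Lemma \ref{lemma.restensor} also commutes with $\intotimesb$, so acyclicity can be checked bidegree-by-bidegree as a statement about ordinary graded $B$-modules. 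There it follows from Remark \ref{remark.tensor} (exactness of tensoring with a $B$-projective bimodule viewed as a projective graded $B$-module) combined with the classical acyclicity lemma for total complexes of bounded-above acyclic complexes of projectives, as in \cite[Lemma 13.18.6]{stacks}.

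With these ingredients in hand, the abstract derived-functor construction from \cite[Theorem 2.2]{amnon} then produces $-\underline{\otimes}_{B}^{\operatorname{L}}-$ on ${\sf D}^{-}({\sf Bimod }(A-B)) \times {\sf D}^{-}({\sf Bimod }(B-C))$ with values in ${\sf D}^{-}({\sf Bimod }(A-C))$. The \emph{furthermore} clause falls out of the construction: if $M^{\bullet}$ is already a complex of $B$-projectives no replacement is needed, while the symmetric acyclicity statement above lets one resolve the second variable instead, showing that the derived functor is balanced and agrees with $\operatorname{Tot}(M^{\bullet} \intotimesb N^{\bullet})$ whenever $N^{\bullet}$ is a complex of $B^{op}$-projectives. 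The main obstacle is precisely this symmetric (balancing) form of the acyclicity lemma, since one needs the resolution-by-either-side freedom; however once the reduction via $\operatorname{Res}_{k}$ and Lemma \ref{lemma.restensor} is in place, it is a routine consequence of the ordinary graded-module fact.
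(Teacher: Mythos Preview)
Your proposal is correct and follows essentially the same approach as the paper, which simply says the proof is the same as that of \cite[Theorem 2.5]{amnon}. The only cosmetic difference is that you cite \cite[Theorem 2.2]{amnon} and explain how to adapt it to the tensor setting, whereas the paper directly cites the tensor-product analogue \cite[Theorem 2.5]{amnon}; since the latter's proof in \cite{amnon} is itself the adaptation you describe, the content is the same.

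One small point worth tightening: for the balancing (``furthermore'') clause you need not only the case where the \emph{acyclic} complex is the one consisting of $B$-projectives (resp.\ $B^{op}$-projectives), but also the mixed case where $P^{\bullet}$ is a bounded-above complex of $B$-projectives and $N^{\bullet}$ is an arbitrary bounded-above acyclic complex (and symmetrically). Your reduction via $\operatorname{Res}$ and Remark \ref{remark.tensor} handles this as well---since $e_{h}P$ is projective in ${\sf Gr}\,B$ for each $h$, the functor $e_{h}P \intotimesb -$ is exact---so the standard filtration argument for bounded-above double complexes gives acyclicity of the total complex; you may want to state this case explicitly.
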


\begin{proof}
The proof is the same as the proof of \cite[Theorem 2.5]{amnon}.
\end{proof}

\begin{lemma} \label{lemma.extend}
Suppose $B$ is a right Ext-finite, connected $\mathbb{Z}$-algebra.  Then, the torsion functor extends to a left-exact functor $\tau:{\sf Bimod }(A-B) \rightarrow {\sf Bimod }(A-B)$.
\end{lemma}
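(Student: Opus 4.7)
The plan is to define $\tau$ on bimodules by imitating the formula of Lemma \ref{lemma.torsfunct} internally. Specifically, for each $n \geq 0$, the quotient $B/B_{\geq n}$ is a $B$-$B$-bimodule, so by the definition of internal $\operatorname{Hom}$ (Section \ref{section.inthom}), the assignment
\[
N \mapsto \usHomb(B/B_{\geq n}, N)
\]
gives a functor ${\sf Bimod}(A-B) \to {\sf Bimod}(A-B)$, where the right $B$-structure on the output comes from the $B$-action on the first argument and the left $A$-structure comes from the $A$-action on $N$. The canonical surjections $B/B_{\geq n+1} \twoheadrightarrow B/B_{\geq n}$ induce a directed system in the functor category, and we define
\[
\tau(N) := \dlim \usHomb(B/B_{\geq n}, N).
\]
Since ${\sf Bimod}(A-B) \equiv {\sf Gr}(A^{op} \otimes_k B)$ by Proposition \ref{prop.equiv}, this is a Grothendieck category, so the direct limit exists and is functorial in $N$.

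Next, I would verify that this construction agrees with the original torsion functor after restriction to right $B$-modules. By Lemma \ref{lemma.restriction}, for each $n$ we have
\[
\operatorname{Res}_{B}\bigl(\usHomb(B/B_{\geq n}, N)\bigr) = \usHomb(B/B_{\geq n}, \operatorname{Res}_{B} N).
\]
Restriction commutes with direct limits (both are computed componentwise on the underlying abelian groups), so combining with Lemma \ref{lemma.torsfunct} applied to $B$ yields $\operatorname{Res}_{B} \tau(N) \cong \tau(\operatorname{Res}_{B} N)$, the usual torsion submodule of the underlying right $B$-module of $N$. In particular, $\tau(N)$ is genuinely a sub-bimodule of $N$, and on the subcategory of $K$-$B$-bimodules this construction recovers the torsion functor already defined in Lemma \ref{lemma.serresub}.

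Finally, left exactness follows from two standard observations. First, for each fixed $n$, the functor $\usHomb(B/B_{\geq n}, -)$ is left exact in the second argument (Remark \ref{remark.hom}). Second, direct limits over the filtered poset $\mathbb{N}$ are exact in the Grothendieck category ${\sf Bimod}(A-B)$, and in particular preserve kernels. Composing these, $\tau$ is left exact. The only point one must be slightly careful about is ensuring that the natural map $\usHomb(B/B_{\geq n+1}, -) \to \usHomb(B/B_{\geq n}, -)$ is a morphism of functors to ${\sf Bimod}(A-B)$ and not merely to ${\sf Gr}B$; this is immediate from naturality of the isomorphism in Lemma \ref{lemma.restriction} and the fact that the transition maps are morphisms of $B$-$B$-bimodules. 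No step here poses a genuine obstacle: the whole argument is a transport of Lemma \ref{lemma.serresub} and Lemma \ref{lemma.torsfunct} through the bimodule formalism established in Section \ref{section.shom}.
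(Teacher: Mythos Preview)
Your argument is correct, but it takes a somewhat different route than the paper's. The paper proceeds directly: it defines $\tau(M) := \tau\bigl(\bigoplus_i e_i M\bigr)$ as the torsion submodule of the underlying right $B$-module, invokes Lemma~\ref{lemma.dlims} to identify this with $\bigoplus_i \tau(e_i M)$, and then checks by hand that left multiplication by $A$ preserves the torsion condition (since being torsion is a purely right-module property). Your approach instead lifts the formula of Lemma~\ref{lemma.torsfunct} to the bimodule level via the internal $\usHomb$, which automatically produces a bimodule and dispenses with the explicit verification that the $A$-action is preserved. Both arguments are short and rest on the same underlying fact; yours is more categorical, the paper's more elementary.

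One small imprecision: Lemma~\ref{lemma.restriction} as stated concerns $\operatorname{Res}_k$, not $\operatorname{Res}_B$, so it does not literally give the identity you quote. What you actually need is that forgetting the left $A$-structure on $N$ commutes with $\usHomb(B/B_{\geq n},-)$; this is immediate from the definition of $\usHomb$ in Section~\ref{section.inthom}, since the left $A$-structure on the output comes solely from that on $N$. You should cite the definition directly rather than Lemma~\ref{lemma.restriction}.
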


\begin{proof}
Let $M$ be an $A-B$-bimodule.  We define
$$
\tau(M):= \tau(\bigoplus_{i}e_{i}M)
$$
which, by Lemma \ref{lemma.dlims}, is isomorphic to $\bigoplus_{i}\tau(e_{i}M)$.  If $m \in \tau(e_{i}M)_{j}$ and $a \in A_{lj}$ then $am \in \tau(e_{l}M)$.  The result follows from this.
\end{proof}

By \cite[Theorem 10.5.6]{weibel}, $\tau$ has a right-derived functor
$$
\operatorname{R}\tau: {\sf D}^{+}({\sf Bimod }(A-B)) \rightarrow {\sf D}({\sf Bimod }(A-B)).
$$

\begin{lemma} \label{lemma.tech}
Let $A$ be a right Ext-finite, connected $\mathbb{Z}$-algebra.  If $K^{\bullet}$ is a complex whose terms are free right $A$-modules and $N^{\bullet}$ is a complex of $A-A$-bimodules, then there is a canonical isomorphism
$$
\operatorname{Tot}(K^{\bullet} \intotimes \tau(N^{\bullet})) \cong \tau(\operatorname{Tot}(K^{\bullet} \intotimes N^{\bullet})).
$$
\end{lemma}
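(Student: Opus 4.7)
The plan is to reduce the assertion to a pointwise statement at the level of bimodules and then extend to complexes by naturality. Fixing $n \in \mathbb{Z}$, I would observe that the degree-$n$ term of the total complex is
$$
\operatorname{Tot}(K^{\bullet} \intotimes L^{\bullet})^{n} = \bigoplus_{p+q=n} K^{p} \intotimes L^{q}.
$$
Since the torsion functor commutes with arbitrary direct sums (a consequence of Lemma \ref{lemma.dlims}, since direct sums are filtered colimits of finite direct sums; alternatively, by the elementwise characterization of $\tau$ from Lemma \ref{lemma.serresub}), it suffices to produce, for each free right $A$-module $F$ and each $A$-$A$-bimodule $N$, a natural isomorphism $F \intotimes \tau(N) \cong \tau(F \intotimes N)$, and then to verify compatibility with differentials.

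For the pointwise statement, I would write $F = \bigoplus_{l} e_{i_{l}}A$, and, using once more that $\tau$ preserves direct sums and that $\intotimes$ is additive in the first argument, reduce to the case $F = e_{i}A$. The canonical isomorphism $e_{i}A \intotimes N \cong e_{i}N$ recorded in Section \ref{section.tensordef}, combined with the identity $e_{i}\tau(N) = \tau(e_{i}N)$ (which comes straight from the formula $\tau(N) = \bigoplus_{j}\tau(e_{j}N)$ established in the proof of Lemma \ref{lemma.extend}), gives the chain
$$
e_{i}A \intotimes \tau(N) \;\cong\; e_{i}\tau(N) \;=\; \tau(e_{i}N) \;\cong\; \tau(e_{i}A \intotimes N),
$$
which is manifestly natural in both variables.

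To finish, I would check that this isomorphism is compatible with the total-complex differential, which is a signed sum of the horizontal piece induced by the differential of $K^{\bullet}$ and the vertical piece induced by the differential of $N^{\bullet}$. Naturality of the pointwise isomorphism in $F$ (applied to a $K$-differential $K^{p} \to K^{p+1}$, which is a morphism of free right $A$-modules) handles the horizontal part, while functoriality of $\tau$ together with naturality in $N$ handles the vertical part; the Koszul signs are preserved because the isomorphism is $A$-linear in the first input, so no sign issues arise.

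The only step requiring any care is the claim that $\tau$ commutes with direct sums on the bimodule category, as opposed to the graded right module category where this was recorded in Lemma \ref{lemma.dlims}. This is the main obstacle, but only a mild one: it follows from the bimodule-level definition $\tau(M) = \bigoplus_{i}\tau(e_{i}M)$ of Lemma \ref{lemma.extend}, combined with the fact that in ${\sf Gr }A$ an element of a direct sum is torsion if and only if each of its (finitely many nonzero) components is, which is immediate from Lemma \ref{lemma.serresub}. Everything else is a routine verification that stays well within the framework already set up.
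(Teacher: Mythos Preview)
Your proposal is correct and follows exactly the same approach as the paper: the paper's proof consists of the single sentence ``By Lemma \ref{lemma.dlims}, $\tau$ commutes with direct sums, and the result follows,'' and your argument is a careful unpacking of precisely this implication. Your elaboration of the reduction to $F=e_iA$ and the compatibility with differentials fills in details the paper leaves implicit, but there is no difference in strategy.
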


\begin{proof}
By Lemma \ref{lemma.dlims}, $\tau$ commutes with direct sums, and the result follows.
\end{proof}

The following is a $\mathbb{Z}$-algebra version of $\mathbb{Z}$-graded local duality \cite[Theorem 5.1(2)]{dualizing}.
\begin{theorem} \label{thm.ld1}
Let $A$ be a right Ext-finite, connected $\mathbb{Z}$-algebra.  Let $M$ be an object in ${\sf Bimod }(K-A)$.  Then
$$
D \operatorname{R}\tau(M) \cong \operatorname{R}\usHom(M, D \operatorname{R} \tau(A))
$$
in ${\sf D}({\sf Bimod }(A-K))$.
\end{theorem}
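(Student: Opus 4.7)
The plan is to reduce the theorem to two assertions: a ``local cohomology equals tensor product'' isomorphism
\begin{equation*}
\operatorname{R}\tau(M) \cong M \underline{\otimes}_A^{\operatorname{L}} \operatorname{R}\tau(A)
\end{equation*}
in ${\sf D}({\sf Bimod}(K-A))$, together with the derived duality
\begin{equation*}
D(M \underline{\otimes}_A^{\operatorname{L}} N^\bullet) \cong \operatorname{R}\usHom(M, D(N^\bullet))
\end{equation*}
supplied by Corollary \ref{cor.D} combined with Propositions \ref{prop.homexist} and \ref{prop.tensorexist}. Granting the first iso, applying $D$ and then substituting the second with $N^\bullet = \operatorname{R}\tau(A)$ yields precisely the statement of the theorem.

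For the key iso, first invoke Lemma \ref{lemma.torsfunct} to write $\tau(-) \cong \dlim \usHom(A/A_{\geq n},-)$, where the direct limit runs along the surjections $A/A_{\geq n+1} \twoheadrightarrow A/A_{\geq n}$. Since direct limits are exact in the Grothendieck category ${\sf Bimod}(K-A)$, a hypercohomology argument against an injective resolution $M \to I^\bullet$ promotes this to $\operatorname{R}\tau(M) \cong \dlim \operatorname{R}\usHom(A/A_{\geq n}, M)$. By Lemma \ref{lemma.resshape}, each $e_i(A/A_{\geq n})$ admits a resolution by finitely generated free right $A$-modules, so $A/A_{\geq n}$ is perfect as a right $A$-module. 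For such a perfect complex, the natural tensor-evaluation map
\begin{equation*}
M \underline{\otimes}_A^{\operatorname{L}} \operatorname{R}\usHom(A/A_{\geq n}, A) \longrightarrow \operatorname{R}\usHom(A/A_{\geq n}, M)
\end{equation*}
is an isomorphism: reducing along the resolution via the five-lemma to the case $P = e_jA$, both sides are canonically identified. Since $M \underline{\otimes}_A^{\operatorname{L}} -$ commutes with direct limits, one chains
\begin{equation*}
M \underline{\otimes}_A^{\operatorname{L}} \operatorname{R}\tau(A) \cong \dlim M \underline{\otimes}_A^{\operatorname{L}} \operatorname{R}\usHom(A/A_{\geq n}, A) \cong \dlim \operatorname{R}\usHom(A/A_{\geq n}, M) \cong \operatorname{R}\tau(M).
\end{equation*}

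The second step is formal. Choose a projective resolution $P^\bullet \to M$ in ${\sf Bimod}(K-A)$; Corollary \ref{cor.D} applied to $P^\bullet$ and a chosen complex representing $\operatorname{R}\tau(A)$ yields
\begin{equation*}
\usHom^\bullet(P^\bullet, D(\operatorname{R}\tau(A))) \cong D(\operatorname{Tot}(P^\bullet \intotimes \operatorname{R}\tau(A))).
\end{equation*}
By Proposition \ref{prop.homexist}(2) the left side represents $\operatorname{R}\usHom(M, D\operatorname{R}\tau(A))$, while by Proposition \ref{prop.tensorexist} the right side represents $D(M \underline{\otimes}_A^{\operatorname{L}} \operatorname{R}\tau(A))$, which is isomorphic to $D\operatorname{R}\tau(M)$ by the key iso. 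The main obstacle will be the tensor-evaluation isomorphism in the first step: although reduction to $P = e_jA$ is conceptually clean, verifying compatibility with the $K$-$A$- and $A$-$A$-bimodule structures through the natural map—and then commuting the direct limit past $M \underline{\otimes}_A^{\operatorname{L}} -$—requires delicate bookkeeping. There is also the technical matter of defining $M \underline{\otimes}_A^{\operatorname{L}} \operatorname{R}\tau(A)$ when $\operatorname{R}\tau(A)$ is merely bounded below, which is feasible because $M$ admits a bounded-above projective resolution for which the direct-sum total complex with $\operatorname{R}\tau(A)$ remains well-defined.
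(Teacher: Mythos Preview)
Your overall architecture---first establish $\operatorname{R}\tau(M)\cong M\underline{\otimes}_A^{\operatorname{L}}\operatorname{R}\tau(A)$, then dualize via Corollary~\ref{cor.D} and Propositions~\ref{prop.homexist} and~\ref{prop.tensorexist}---matches the paper's strategy, and your second (``formal'') step is essentially the paper's argument verbatim. The difference lies entirely in how the key isomorphism is obtained.

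The paper's route is shorter. It takes an injective resolution $E^\bullet$ of $A$ in ${\sf Bimod}(A-A)$ and a projective resolution $K^\bullet$ of $M$ in ${\sf Bimod}(K-A)$ whose terms are free on the right (via Lemma~\ref{lemma.bproj}), and then invokes Lemma~\ref{lemma.tech}---which is just the observation that $\tau$, commuting with direct sums by Lemma~\ref{lemma.dlims}, commutes with tensoring against frees---to obtain
\[
M\underline{\otimes}_A^{\operatorname{L}}\operatorname{R}\tau(A)\;\cong\;\operatorname{Tot}(K^\bullet\intotimes\tau(E^\bullet))\;\cong\;\tau\bigl(\operatorname{Tot}(K^\bullet\intotimes E^\bullet)\bigr)\;\cong\;\operatorname{R}\tau(M),
\]
the last step holding because $\operatorname{Tot}(K^\bullet\intotimes E^\bullet)$ is $\tau$-acyclic and quasi-isomorphic to $M$. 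No direct limits, no tensor-evaluation.

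Your route through $\dlim\operatorname{R}\usHom(A/A_{\geq n},-)$ and tensor-evaluation has a gap. Lemma~\ref{lemma.resshape} only furnishes, for each $i$, a possibly \emph{infinite} resolution of $e_i(A/A_{\geq n})$ by finitely generated frees; this does not make $A/A_{\geq n}$ perfect, and right Ext-finiteness alone gives no bound on projective dimension. Moreover, those are resolutions in ${\sf Gr}\,A$, not a resolution of $A/A_{\geq n}$ in ${\sf Bimod}(A-A)$, so they do not directly compute $\operatorname{R}\usHom(A/A_{\geq n},A)$ with its left $A$-structure intact; and for the underived tensor with $M$ to compute the derived one you would also need the terms $\usHom(F^q,A)$ to be left-$A$-flat, which is not evident. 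To salvage the argument you would need a bimodule resolution of $A/A_{\geq n}$ by objects finitely generated free on the right, or a separate check that the row-by-row isomorphisms assemble compatibly with the left $A$-action---precisely the bookkeeping you flagged as the main obstacle. The paper's Lemma~\ref{lemma.tech} bypasses all of this in one line.
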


\begin{proof}
By Proposition \ref{prop.equiv}(1), there exists an injective resolution of $A$ as an $A-A$-bimodule, which we denote by $E^{\bullet}$.  By Lemma \ref{lemma.bproj},  there exists a projective resolution $K^{\bullet}$ of $M$ in ${\sf Bimod }(K-A)$ whose terms are direct sums of modules of form $Ke_i\otimes _ke_jA$.  Since each $Ke_i\otimes _ke_jA$ is a free right $A$-module, we may view $K^{\bullet}$ as a graded right $A$-module resolution of $M=\oplus _ie_iM$ whose terms are free.  Thus, by Lemma \ref{lemma.tech}, $\mbox{Tot}(K^{\bullet} \intotimes E^{\bullet})$ is a $\tau$-acyclic complex.  Furthermore, this complex is quasi-isomorphic to $M \intotimes A \cong M$.

By Lemma \ref{lemma.tech}, we have
\begin{eqnarray*}
M \intotimes^{\operatorname{L}} \operatorname{R}\tau(A) & \cong & \mbox{Tot}(K^{\bullet} \intotimes \tau(E^{\bullet})) \\
& \cong & \tau(\mbox{Tot}(K^{\bullet} \intotimes E^{\bullet})) \\
& \cong & \operatorname{R}\tau(M).
\end{eqnarray*}

Thus,
\begin{eqnarray*}
\operatorname{R}\usHom(M, D\operatorname{R} \tau(A)) & \cong & \usHom^{\bullet}(K^{\bullet},D \operatorname{R}\tau(A)) \\
& \cong & D(\operatorname{Tot}(K^{\bullet} \intotimes \operatorname{R}\tau(A)) \\
& \cong & D(M \intotimes^{\operatorname{L}} \operatorname{R}\tau(A)) \\
& \cong & D\operatorname{R}\tau(M),
\end{eqnarray*}
where the second map is from Corollary \ref{cor.D}.
\end{proof}



\section{AS-regularity and ASF-regularity} \label{section.regularity}
In this section, we assume $A$ is a connected $\mathbb{Z}$-algebra.

\begin{definition}
$A$ is {\it AS-regular of dimension $d$ and of Gorenstein parameter $l$} if
\begin{enumerate}
\item{} $\operatorname{pd} e_{i}A_{0} = d$ for all $i \in \mathbb{Z}$, and

\item{} $\sExt^{q}(e_{i}A_{0}, e_{j}A) \cong \begin{cases} A_{ii}^{*} \mbox{ if $q=d$ and $j=i+l$} \\ 0 \mbox{ otherwise.} \end{cases}$
\end{enumerate}
\end{definition}
As pointed out in \cite[Section 4.1]{quadrics}, $A$ is a $\mathbb{Z}$-graded AS-regular algebra of dimension $d$ and of Gorenstein parameter $l$ if and only if its associated $\mathbb{Z}$-algebra $\bigoplus_{i,j \in \mathbb{Z}}A_{j-i}$ is AS-regular of dimension $d$ and of Gorenstein parameter $l$.  Moreover, there exist AS-regular $\mathbb{Z}$-algebras of dimension $3$ and Gorenstein paramter $4$ which do not arise in this manner \cite[Section 5.6]{quadrics}.  In \cite{abstractpn}, we will give a general method for constructing AS-regular $\mathbb{Z}$-algebras.

Since $A$ is positively graded, $A^{op}$ is no longer positively graded, but, although $A^{op}$ is not isomorphic to $\widetilde{A^{op}}$, $A^{op}$ and $\widetilde{A^{op}}$ have equivalent graded left and right module categories by Proposition \ref{prop.equiv}.  This motivates the following definition.

\begin{definition}
We say the negatively graded algebra $A^{op}$ is {\it AS-regular of dimension $d$ and of Groenstein parameter $l$} if $\widetilde{A^{op}}$ is AS-regular of dimension $d$ and of Gorenstein parameter $l$.
\end{definition}

\begin{prop} \label{prop.someext}
Suppose $A$ is an Ext-finite, AS-regular algebra of dimension $d$ and of Gorenstein parameter $l$.  If $n \geq 1$ and $N$ is in ${\sf Gr }A$, then $\usExt^{i}(A/A_{\geq n},N)=0$ for $i>d$ and $\usExt^{i}(A/A_{\geq n},e_{m}A)=0$ for $i \neq d$.
\end{prop}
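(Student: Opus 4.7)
The plan is to prove both vanishing statements simultaneously by induction on $n \geq 1$, with Corollary \ref{cor.extstar} doing the technical work at each step.

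For the base case $n=1$, we have $A/A_{\geq 1} \cong A_0$, so
$$\usExt^{i}(A_{0},N)_{j} \cong \sExt^{i}(e_{j}A_{0},N).$$
Condition (1) in the definition of AS-regularity gives $\operatorname{pd} e_jA_0 = d$, which forces $\sExt^i(e_jA_0, N) = 0$ for all $i > d$; condition (2) gives $\sExt^i(e_jA_0, e_mA) = 0$ for all $i \neq d$. This settles $n=1$.

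For the inductive step, I would apply the contravariant functor $\usExt^{i}(-, N)$ to the short exact sequence
$$0 \rightarrow A_{\geq n}/A_{\geq n+1} \rightarrow A/A_{\geq n+1} \rightarrow A/A_{\geq n} \rightarrow 0$$
to produce the long exact sequence
$$\cdots \rightarrow \usExt^{i}(A/A_{\geq n},N) \rightarrow \usExt^{i}(A/A_{\geq n+1},N) \rightarrow \usExt^{i}(A_{\geq n}/A_{\geq n+1},N) \rightarrow \cdots.$$
The outer terms at step $i$ will then vanish in the required ranges: the first by the inductive hypothesis, and the third by Corollary \ref{cor.extstar}, which (using that $A$ is Ext-finite so that $A_{j,j+n}$ is finite dimensional over $A_{jj}$ and $A_{j+n,j+n}$ by Remark \ref{remark.ext}) yields
$$\usExt^{i}(A_{\geq n}/A_{\geq n+1},N)_{j} \cong \sExt^{i}(e_{j+n}A_{0},N) \otimes_{A_{j+n,j+n}} A_{j,j+n}^{*}.$$
For $i > d$ the right-hand side is zero by condition (1), and for $N = e_{m}A$ with $i \neq d$ it is zero by condition (2). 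Thus the middle term $\usExt^{i}(A/A_{\geq n+1},N)$ vanishes in both ranges, completing the induction.

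I do not anticipate any serious obstacle: the whole argument is a dévissage along the canonical filtration of $A/A_{\geq n+1}$, and once Corollary \ref{cor.extstar} reduces vanishing for $A_{\geq n}/A_{\geq n+1}$ to vanishing for $A_0$, the two halves of the AS-regularity definition supply exactly what is needed. The only mild care required is to run both statements as a single induction, so that the hypothesis available at step $n$ is strong enough to feed the long exact sequence for both claims at step $n+1$.
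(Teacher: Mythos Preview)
Your proof is correct and follows essentially the same approach as the paper's: induction on $n$, with the base case $n=1$ read off directly from the two conditions in the definition of AS-regularity, and the inductive step handled by the long exact sequence coming from $0 \to A_{\geq n}/A_{\geq n+1} \to A/A_{\geq n+1} \to A/A_{\geq n} \to 0$ together with Corollary~\ref{cor.extstar} (whose applicability via Remark~\ref{remark.ext} you also note). The only cosmetic difference is that the paper phrases the base case for the first vanishing as ``computed from a length $d$ projective resolution'' rather than citing $\operatorname{pd} e_jA_0 = d$ explicitly, but this is the same content.
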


\begin{proof}
We prove the result by induction on $n$.  When $n=1$, the first result follows from the fact that the $j$th component of $\usExt^{i}(A/A_{\geq 1},N)$ may be computed from a length $d$ projective resolution of the $j$th component of $A/A_{\geq 1}$, while the second result follows directly from regularity.

For the general case, we note that the exact sequence
$$
0 \rightarrow A_{\geq n}/A_{\geq n+1} \rightarrow A/A_{\geq n+1} \rightarrow A/A_{\geq n} \rightarrow 0
$$
in ${\sf Bimod }(A-A)$ induces a long exact sequence, of which
$$
\usExt^{i}(A/A_{\geq n}, N) \rightarrow \usExt^{i}(A/A_{\geq n+1},N) \rightarrow \usExt^{i}(A_{\geq n}/A_{\geq n+1},N)
$$
is a part.  If $i>d$ the left term is zero by induction while the right term is zero by Corollary \ref{cor.extstar} (which applies by Remark \ref{remark.ext}) and regularity.  Therefore, the center is zero in this case.  If $i<d$ and $N=e_{m}A$, the same reasoning ensures that the center is zero.
\end{proof}

The proof of the next result, Corollary \ref{cor.gor0}, generalizes that of \cite[Corollary 7.3]{abstract}, which was used to prove Serre vanishing for noncommutative projective lines.  In this paper, we use Corollary \ref{cor.gor0} to prove Corollary \ref{cor.af}, which, in turn, is utilized in the proof of the main result in \cite{abstractpn}.  In the proof of Corollary \ref{cor.gor0}, we will use the following terminology.  Suppose $\lambda, \rho \in \mathbb{Z}$ with $\lambda \leq \rho$ and let $N \in {\sf Gr }A$.  We write $N \subset [\lambda, \rho]$ if $N_{i}$ nonzero implies that $\lambda \leq i \leq \rho$.  We say $N$ is {\it concentrated in degree $m$} if $N \subset [m,m]$.

\begin{cor} \label{cor.gor0}
Suppose $A$ is an Ext-finite AS-regular algebra of dimension $d$ and Gorenstein parameter $l$, and let $N$ be an object of ${\sf Gr }A$.
\begin{enumerate}
\item{}
For $i>d$,
$$
\mbox{R}^{i}\tau (N)=0.
$$
\item{} For $i \neq d$ and all $j$,
$$
\mbox{R}^{i}\tau (e_{j}A)=0.
$$
\item{} For all $i,j$,
$$
(\mbox{R}^{d}\tau (e_{j}A))_{j-l-i} \cong A^{*}_{j-l-i, j-l}
$$
as right $A_{j-l-i, j-l-i}$-modules.

\end{enumerate}
\end{cor}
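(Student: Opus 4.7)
The plan is to begin by combining Lemma \ref{lemma.torsfunct} with the fact that $\mbox{R}^{i}\tau$ commutes with direct limits (Lemma \ref{lemma.dlims}) to obtain, for every $N \in {\sf Gr }A$ and every $i \geq 0$, a canonical isomorphism
$$
\mbox{R}^{i}\tau(N) \cong \dlim \usExt^{i}(A/A_{\geq n}, N).
$$
This reduces all three parts of the statement to analyzing a direct system of internal Ext groups attached to the filtration $\{A_{\geq n}\}$. Parts (1) and (2) are then immediate from Proposition \ref{prop.someext}: it yields $\usExt^{i}(A/A_{\geq n},N) = 0$ for every $n \geq 1$ as soon as $i>d$, and yields $\usExt^{i}(A/A_{\geq n}, e_{j}A) = 0$ for every $n \geq 1$ as soon as $i \neq d$. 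In either case the direct limit vanishes.

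For part (3), I would work with the short exact sequence
$$
0 \to A_{\geq n}/A_{\geq n+1} \to A/A_{\geq n+1} \to A/A_{\geq n} \to 0
$$
in ${\sf Bimod}(A-A)$ and apply $\usExt^{*}(-, e_{j}A)$. The resulting long exact sequence has its outer terms $\usExt^{d+1}(A/A_{\geq n}, e_{j}A)$ and $\usExt^{d-1}(A_{\geq n}/A_{\geq n+1}, e_{j}A)$ both zero: the first by Proposition \ref{prop.someext}, the second by Corollary \ref{cor.extstar} combined with AS-regularity, which forces $\sExt^{d-1}(e_{m}A_{0}, e_{j}A) = 0$ for all $m$. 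One therefore obtains a short exact sequence
$$
0 \to \usExt^{d}(A/A_{\geq n}, e_{j}A) \to \usExt^{d}(A/A_{\geq n+1}, e_{j}A) \to \usExt^{d}(A_{\geq n}/A_{\geq n+1}, e_{j}A) \to 0.
$$
Setting $r = j+l-i$ and invoking Corollary \ref{cor.extstar} once more, the rightmost term in degree $r$ is
$$
\sExt^{d}(e_{r+n}A_{0}, e_{j}A) \otimes_{A_{r+n,r+n}} A^{*}_{r,r+n},
$$
which, by AS-regularity with Gorenstein parameter $l$, vanishes unless $(r+n)-l = j$, i.e.\ unless $n = i$. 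Consequently the direct system $\{\usExt^{d}(A/A_{\geq n}, e_{j}A)_{r}\}_{n}$ is zero for $n \leq i$, acquires its unique nonzero cokernel contribution at the step $n = i \to n+1 = i+1$, and then stabilizes. The cokernel computes via AS-regularity, which identifies $\sExt^{d}(e_{j+l}A_{0}, e_{j}A)$ with $A^{*}_{j+l,j+l}$, and the resulting tensor product reduces to $A^{*}_{j+l-i, j+l}$. Passing to the direct limit yields the claimed isomorphism.

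The step I expect to be the main obstacle is not the vanishing or the degree-counting, which are mechanical once the tools above are in hand, but rather the bookkeeping needed to identify the right $A_{j+l-i,j+l-i}$-module structure on the surviving piece. One must check that the $A_{r,r}$-action inherited from the chain of identifications through Corollary \ref{cor.extstar}, Lemma \ref{lemma.extcommute}, and AS-regularity agrees with the natural right action on $A^{*}_{j+l-i, j+l}$ obtained by dualizing its left $A_{j+l-i, j+l-i}$-module structure; everything else is a routine limit argument built on the short exact sequence above.
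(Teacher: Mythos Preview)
Your proposal is correct and follows essentially the same approach as the paper: both derive (1) and (2) from Proposition \ref{prop.someext} via the identification $\mathrm{R}^{i}\tau \cong \dlim \usExt^{i}(A/A_{\geq n},-)$ coming from Lemma \ref{lemma.torsfunct} and exactness of direct limits, and both handle (3) by feeding the filtration short exact sequence into $\usExt^{d}(-,e_{j}A)$ and using Corollary \ref{cor.extstar} together with AS-regularity to see that the cokernel in a fixed degree is supported at a single step. The paper packages part (3) as three separate claims (a support interval $[j+l-n,\,j+l]$ for $\usExt^{d}(A/A_{\geq n+1},e_{j}A)$, stabilization of the system in the given degree, and the identification of the stable value), whereas you track the direct system in one pass, but the arguments are the same and your remark about the $A_{j+l-i,\,j+l-i}$-module structure is exactly the bookkeeping the paper leaves implicit in its invocation of Corollary \ref{cor.extstar}.
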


\begin{proof}
By Lemma \ref{lemma.torsfunct} and the fact that ${\sf Gr }A$ has exact direct limits, the first two results follow directly from Proposition \ref{prop.someext}.

To prove (3), we prove three preliminary claims.  We first claim that
$$
\usExt^{d}(A/A_{\geq n+1}, e_{j}A) \subset [j-l-n,j-l].
$$
To this end, we note that, by Proposition \ref{prop.someext}, the sequence
$$
0 \rightarrow A_{\geq n}/A_{\geq n+1} \rightarrow A/A_{\geq n+1} \rightarrow A/ A_{\geq n} \rightarrow 0
$$
induces an exact sequence
\begin{equation} \label{eqn.ext2}
0 \rightarrow \usExt^{d}(A/ A_{\geq n}, e_{j}A) \rightarrow \usExt^{d}(A/A_{\geq n+1}, e_{j}A) \rightarrow \usExt^{d}(A_{\geq n}/A_{\geq n+1}, e_{j}A) \rightarrow 0.
\end{equation}
If $n=0$, then the claim follows from AS-regularity.  By Corollary \ref{cor.extstar} and AS-regularity, the third term in (\ref{eqn.ext2}) is concentrated in degree $j-l-n$, so the general case follows from the induction hypotheses and (\ref{eqn.ext2}).

By (\ref{eqn.ext2}), there is an injection
$$
\usExt^{d}(A/ A_{\geq i+1}, e_{j}A) \rightarrow \dlim \usExt^{d}(A/ A_{\geq n}, e_{j}A).
$$

We next claim that this injection is surjective in degree $j-l-i$.  To prove this claim, we note that if $n<i$, $\usExt^{d}(A/ A_{\geq n+1}, e_{j}A)_{j-l-i}=0$ by the first claim.  On the other hand, if $n \geq i$, the canonical map
\begin{equation} \label{eqn.newst}
\usExt^{d}(A/ A_{\geq i+1}, e_{j}A)_{j-l-i} \rightarrow \usExt^{d}(A/ A_{\geq n+1}, e_{j}A)_{j-l-i}
\end{equation}
is an isomorphism, as follows.  As in the proof of the first claim, the third term in (\ref{eqn.ext2}) is concentrated in degree $j-l-n$, so that
\begin{equation} \label{eqn.newstprime}
\usExt^{d}(A/ A_{\geq n}, e_{j}A)_{\geq j-l-n+1} \rightarrow \usExt^{d}(A/ A_{\geq n+1}, e_{j}A)_{\geq j-l-n+1}
\end{equation}
is an isomorphism.  Therefore, the maps
\begin{equation} \label{eqn.newstnew}
\usExt^{d}(A/ A_{\geq i+1}, e_{j}A)_{j-l-i} \rightarrow \usExt^{d}(A/ A_{\geq i+2}, e_{j}A)_{j-l-i} \rightarrow
\end{equation}
$$
\cdots \rightarrow \usExt^{d}(A/ A_{\geq n+1}, e_{j}A)_{j-l-i}
$$
are isomorphisms for every $n \geq i$ and the second claim follows.

We finally claim that $\usExt^{d}(A/A_{\geq n+1}, e_{j}A)_{j-l-n} \cong A_{j-l-n, j-l}^{*}$.  To prove this, we note that when $n=0$, the claim follows from AS-regularity of $A$.  For $n>0$,
\begin{eqnarray*}
\usExt^{d}(A/A_{\geq n+1}, e_{j}A)_{j-l-n} & \cong & \usExt^{d}(A_{\geq n}/A_{\geq n+1}, e_{j}A)_{j-l-n} \\
& \cong & A_{j-l-n, j-l}^{*}
\end{eqnarray*}
where the first isomorphism follows from the first claim and (\ref{eqn.ext2}), while the second isomorphism follows from Corollary \ref{cor.extstar} and AS-regularity.

Next, we prove (3).  We have
\begin{eqnarray*}
(\mbox{R}^{d}\tau (e_{j}A))_{j-l-i} & \cong & \dlim \usExt^{d}(A/ A_{\geq n}, e_{j}A)_{j-l-i} \\
& \cong & \usExt^{d}(A/ A_{\geq i+1}, e_{j}A)_{j-l-i} \\
& \cong & A^{*}_{j-l-i, j-l},
\end{eqnarray*}
where the first isomorphism is from Lemma \ref{lemma.torsfunct}, the second isomorphism follows from the second claim, and the third isomorphism follows from the third claim.
\end{proof}

\begin{definition}
A connected $\mathbb{Z}$-algebra $A$ is called {\it ASF-regular of dimension $d$ and of Gorenstein parameter $l$} if for all $j \in \mathbb{Z}$,
\begin{enumerate}
\item{} $\mbox{sup}\{\operatorname{pd}e_{j}A_{0}| j \in \mathbb{Z}\} = d< \infty$, and

\item{} $\operatorname{R}^{q}\tau(e_{j}A) = \begin{cases} D(Ae_{j-l}) \mbox{ if $q=d$} \\ 0 \mbox{ otherwise} \end{cases}$
as graded right $A$-modules.
\end{enumerate}
\end{definition}
As above, we say that $A^{op}$ is ASF-regular of dimension $d$ and of Gorenstein parameter $l$ if $\widetilde{A^{op}}$ is.

\begin{corollary} \label{cor.af}
Let $A$ be an Ext-finite AS-regular algebra of dimension $d$ and Gorenstein parameter $l$ such that $A_{ii}=k$ for all $i$.  Then, for every $j \in \mathbb{Z}$, there is an isomorphism of graded right $A$-modules
$$
\operatorname{R}^{q}\tau(e_{j}A) \cong \begin{cases} D(Ae_{j-l}) \mbox{ if $q=d$} \\ 0 \mbox{ otherwise.} \end{cases}
$$
\end{corollary}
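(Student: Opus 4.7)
The vanishing $\operatorname{R}^{q}\tau(e_{j}A) = 0$ for $q \neq d$ follows immediately from parts (1) and (2) of Corollary \ref{cor.gor0}, so I focus on the case $q = d$. The plan is to construct a canonical graded right $A$-module map $\Theta : \operatorname{R}^{d}\tau(e_{j}A) \to D(Ae_{j+l})$ and verify it is an isomorphism by reducing to the degree-wise computation in Corollary \ref{cor.gor0}(3).

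Using Corollary \ref{cor.gor0}(3) with $i = 0$ and the hypothesis $A_{ii} = k$, the top-degree component $(\operatorname{R}^{d}\tau(e_{j}A))_{j+l}$ is canonically identified with $A_{j+l, j+l}^{*} = k$. I define $\Theta$ in degree $p$ by sending $\phi \in (\operatorname{R}^{d}\tau(e_{j}A))_{p}$ to the linear functional $a \mapsto \phi \cdot a$ on $A_{p, j+l}$, where $\phi \cdot a \in (\operatorname{R}^{d}\tau(e_{j}A))_{j+l} \cong k$ is obtained from the right $A$-action. Both sides are supported in degrees $p \leq j+l$ (by Corollary \ref{cor.gor0}(3) for the source and positivity of $A$ for the target), so $\Theta$ is well-defined; the graded associativity $(\phi \cdot b) \cdot a = \phi \cdot (ba)$ then shows $\Theta$ is a graded right $A$-module homomorphism.

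To show $\Theta$ is an isomorphism, observe that each graded component of both sides is isomorphic to $A_{p, j+l}^{*}$, which is finite-dimensional by Remark \ref{remark.ext}. Hence it suffices to prove injectivity in each degree, equivalently that the pairing $(\operatorname{R}^{d}\tau(e_{j}A))_{p} \otimes_{k} A_{p, j+l} \to k$ is non-degenerate in the first variable. To see this, trace the isomorphism of Corollary \ref{cor.gor0}(3): via the long exact sequence of $\sExt$ for $0 \to A_{\geq n}/A_{\geq n+1} \to A/A_{\geq n+1} \to A/A_{\geq n} \to 0$ together with Corollary \ref{cor.extstar}, the identification $(\operatorname{R}^{d}\tau(e_{j}A))_{p} \cong A_{p, j+l}^{*}$ ultimately arises from Lemma \ref{lemma.extcommute} applied with $F = A_{p, j+l}$, which produces the factor $A_{p, j+l}^{*}$ on the right-hand side. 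By the naturality of Lemma \ref{lemma.extcommute} in $F$, left multiplication by $a \in A_{p, q}$ (which at the level of strata gives the map $A_{q, j+l} \to A_{p, j+l}$, $x \mapsto ax$, of right $A_{j+l, j+l}$-modules) corresponds to the dual map $A_{p, j+l}^{*} \to A_{q, j+l}^{*}$, $\phi \mapsto (x \mapsto \phi(ax))$, which coincides with the right $A$-action on $D(Ae_{j+l})$. The right-action pairing on $\operatorname{R}^{d}\tau(e_{j}A)$ is thus identified with the standard evaluation pairing $A_{p, j+l}^{*} \otimes_{k} A_{p, j+l} \to k$, which is perfect.

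The main obstacle is the last naturality check: the isomorphism of Corollary \ref{cor.gor0}(3) chains through several connecting homomorphisms in long exact sequences together with Corollary \ref{cor.extstar} and Lemma \ref{lemma.extcommute}. Verifying that the right $A$-action on $\operatorname{R}^{d}\tau(e_{j}A)$, coming from the left $A$-action on $A/A_{\geq n}$, is compatible with each of these identifications and yields the dual-of-multiplication action on $D(Ae_{j+l})$ requires careful bookkeeping, but is ultimately a naturality statement for Lemma \ref{lemma.extcommute} in the $F$-variable.
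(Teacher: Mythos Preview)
Your proof is correct and takes essentially the same approach as the paper: both arguments reduce to showing that the degree-wise identifications of Corollary~\ref{cor.gor0}(3) are compatible with the right $A$-action, which in turn comes down to naturality in $F$ of Lemma~\ref{lemma.extcommute} (equivalently, the universal $\delta$-functor property invoked by the paper). Your packaging differs only in that you first construct the canonical $A$-linear map $\Theta$ from the top-degree identification and then verify bijectivity, whereas the paper directly checks that the full collection of componentwise isomorphisms from Corollary~\ref{cor.gor0}(3) respects multiplication; the technical content is identical.
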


\begin{proof}
In light of Lemma \ref{lemma.usual}, the isomorphism is, componentwise, just that in Corollary \ref{cor.gor0}.  It thus remains to check that the isomorphism is compatible with right multiplication.  To this end, we let $i<m$ be nonnegative integers and we let $a \in A_{j-l-m, j-l-i}$.  We must show the diagram
$$
\begin{CD}
\dlim \usExt^{d}(A/ A_{\geq n}, e_{j}A)_{j-l-i} & \longrightarrow & A^{*}_{j-l-i, j-l} \\
@AAA @AAA \\
\dlim \usExt^{d}(A/ A_{\geq n}, e_{j}A)_{j-l-m} & \longrightarrow & A^{*}_{j-l-m, j-l}
\end{CD}
$$
with verticals induced by left multiplication by $a$ and horizontals from Corollary \ref{cor.gor0}, commutes.  From the second claim in the proof of Corollary \ref{cor.gor0}, it thus suffices to show
$$
\begin{CD}
\usExt^{d}(A/ A_{\geq i+1}, e_{j}A)_{j-l-i} & \longrightarrow & A^{*}_{j-l-i, j-l} \\
@AAA @AAA \\
\usExt^{d}(A/ A_{\geq m+1}, e_{j}A)_{j-l-m} & \longrightarrow & A^{*}_{j-l-m, j-l}
\end{CD}
$$
with maps as in the first diagram, commutes.  By the proof of the third claim in the proof of Corollary \ref{cor.gor0}, it suffices to show the induced diagram
$$
\begin{CD}
\usExt^{d}(A_{\geq i}/ A_{\geq i+1}, e_{j}A)_{j-l-i} & \longrightarrow & A^{*}_{j-l-i, j-l} \\
@AAA @AAA \\
\usExt^{d}(A_{\geq m}/ A_{\geq m+1}, e_{j}A)_{j-l-m} & \longrightarrow & A^{*}_{j-l-m, j-l}
\end{CD}
$$
commutes.  Using the proof of Corollary \ref{cor.extstar}, it suffices to show that the diagram
$$
\begin{CD}
\sExt^{d}(A_{j-l-i,j-l}\otimes e_{j-l}A_{0}, e_{j}A) & \longrightarrow & A^{*}_{j-l-i, j-l} \\
@AAA @AAA \\
\sExt^{d}(A_{j-l-m,j-l} \otimes e_{j-l}A_{0}, e_{j}A) & \longrightarrow & A^{*}_{j-l-m, j-l}
\end{CD}
$$
with verticals induced by multiplication by $a$ and with horizontal maps from Lemma \ref{lemma.extcommute}, commutes. Finally, this commutes by Corollary \ref{cor.extstar} and the universal property of universal $\delta$-functors since, by the remark preceding Lemma \ref{lemma.extcommute}, the functors
$$
\usExt^{d}(A_{j-l-m,j-l} \otimes e_{j-l}A_{0}, -),
$$
$$
\usExt^{d}(A_{j-l-i,j-l} \otimes e_{j-l}A_{0}, -),
$$
$$
\usExt^{d}(e_{j-l}A_{0}, -)\otimes A_{j-l-m,j-l}^{*},
$$
$$
\usExt^{d}(e_{j-l}A_{0}, -)\otimes A_{j-l-i,j-l}^{*}
$$
are components of universal $\delta$-functors.
\end{proof}

\begin{cor} \label{cor.asfas}
If $A$ is Ext-finite, $A_{ii}=k$ for all $i$, and $A$ is AS-regular of dimension $d$ and of Gorenstein parameter $l$, then $A$ is ASF-regular of dimension $d$ and of Gorenstein parameter $l$.
\end{cor}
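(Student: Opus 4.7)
The plan is to verify the two defining conditions of ASF-regularity directly from what has already been established, since nearly all the substantive work has been carried out in the preceding corollaries.

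First I would address condition (1) of the ASF-regular definition. Under AS-regularity, we have $\operatorname{pd}e_{j}A_{0} = d$ for every $j \in \mathbb{Z}$, so in particular $\sup\{\operatorname{pd}e_{j}A_{0} \mid j \in \mathbb{Z}\} = d < \infty$. This is immediate from unpacking the definitions and requires no further argument.

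For condition (2), the statement is essentially the content of Corollary \ref{cor.af}. The hypotheses match exactly: $A$ is Ext-finite, AS-regular of dimension $d$ and Gorenstein parameter $l$, and satisfies $A_{ii}=k$ for all $i$. Applying Corollary \ref{cor.af} to each $j \in \mathbb{Z}$ yields precisely the required isomorphism of graded right $A$-modules
\[
\operatorname{R}^{q}\tau(e_{j}A) \cong \begin{cases} D(Ae_{j+l}) & \text{if } q=d, \\ 0 & \text{otherwise.} \end{cases}
\]

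There is no genuine obstacle here, as the heavy lifting was done in Corollary \ref{cor.gor0} (computing the components of $\operatorname{R}^{d}\tau(e_{j}A)$ via Proposition \ref{prop.someext} and the direct limit description of $\tau$ from Lemma \ref{lemma.torsfunct}) and in Corollary \ref{cor.af} (checking compatibility with the right $A$-action via the universality of $\delta$-functors). Thus the proposed proof is simply to cite these two results and observe that the two defining conditions of ASF-regularity are met.
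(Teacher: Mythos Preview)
Your proposal is correct and matches the paper's proof almost verbatim: the paper simply observes that the first condition for ASF-regularity is immediate from AS-regularity, and that the second condition is exactly the conclusion of Corollary~\ref{cor.af}. Your additional remarks tracing the argument back through Corollary~\ref{cor.gor0}, Proposition~\ref{prop.someext}, and Lemma~\ref{lemma.torsfunct} are accurate but not needed for the proof itself.
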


\begin{proof}
The first condition for ASF-regularity is immediate while the second condition for ASF-regularity holds by Corollary \ref{cor.af}.
\end{proof}
We do not know of an example of an ASF-regular $\mathbb{Z}$-algebra which is not AS-regular.  In fact, we will show that these concepts are equivalent assuming the existence of a balanced dualizing complex (Theorem \ref{theorem.main2}).

For the remainder of this section, $\tilde{\tau}:{\sf Gr }\widetilde{A^{op}} \rightarrow {\sf Gr }\widetilde{A^{op}}$ denotes the torsion functor.
The following theorem is our version of \cite[Theorem 3.3]{jorgensen} (see Remark \ref{remark.jorg}).

\begin{theorem} \label{theorem.main}
Suppose $A$ is a connected, coherent $\mathbb{Z}$-algebra.  Suppose, further, that both $A$ and $A^{op}$ are ASF-regular of dimension $d$ and of Gorenstein parameter $l$.  Then
$$
\operatorname{R}\usHom(-,D\operatorname{R}\tau(A)):{\sf D}^{b}({\sf coh }A) \leftrightarrow {\sf D}^{b}({\sf coh }\widetilde{A^{op}}):\operatorname{R}\usHomao(-,D\operatorname{R}\tilde{\tau}(\widetilde{A^{op}}))
$$
is a duality.
\end{theorem}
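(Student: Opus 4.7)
The strategy is to use local duality (Theorem~\ref{thm.ld1}) to rewrite both functors as $D\operatorname{R}\tau$ and $D\operatorname{R}\tilde{\tau}$, and then show these form a quasi-inverse pair. Writing $\omega := D\operatorname{R}\tau(A)$ and $\tilde{\omega} := D\operatorname{R}\tilde{\tau}(\widetilde{A^{op}})$, Theorem~\ref{thm.ld1} gives canonical isomorphisms $\operatorname{R}\usHom(-,\omega) \cong D\operatorname{R}\tau(-)$ and $\operatorname{R}\usHomao(-,\tilde{\omega}) \cong D\operatorname{R}\tilde{\tau}(-)$, so the theorem reduces to the claim that $D\operatorname{R}\tilde{\tau} \circ D\operatorname{R}\tau \simeq {\sf id}$ on ${\sf D}^{b}({\sf coh}\,A)$, together with the symmetric statement on ${\sf D}^{b}({\sf coh}\,\widetilde{A^{op}})$ (which follows by the same argument applied to $\widetilde{A^{op}}$, Ext-finite and ASF-regular by Lemma~\ref{lemma.extagain} and the hypothesis on $A^{op}$).

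First I would verify that $D\operatorname{R}\tau$ sends ${\sf D}^{b}({\sf coh}\,A)$ into ${\sf D}^{b}({\sf coh}\,\widetilde{A^{op}})$. On the free generator $e_{j}A$, ASF-regularity concentrates $\operatorname{R}\tau(e_{j}A)$ in cohomological degree $d$ with value $D(Ae_{j+l})$. Ext-finiteness (Remark~\ref{remark.ext}) makes each graded component of $Ae_{j+l}$ finite dimensional over $k$, so Lemma~\ref{lemma.usual} furnishes the biduality isomorphism $DD(Ae_{j+l}) \cong Ae_{j+l}$, and hence $D\operatorname{R}\tau(e_{j}A) \cong Ae_{j+l}[d]$. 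Under the equivalence $A\textrm{-}{\sf Gr}\equiv {\sf Gr}\,\widetilde{A^{op}}$ of Proposition~\ref{prop.equiv} this corresponds to $\tilde{e}_{-(j+l)}\widetilde{A^{op}}[d]$, a free finitely generated module. Coherence (Lemma~\ref{lemma.res}) then gives bounded above free finitely generated resolutions for objects of ${\sf coh}\,A$, and since $\operatorname{R}\tau$ has cohomological amplitude $d$, a standard hyperhomology spectral sequence argument confirms that the image lies in ${\sf D}^{b}({\sf coh}\,\widetilde{A^{op}})$; the opposite inclusion for $D\operatorname{R}\tilde{\tau}$ is symmetric.

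Next, a direct computation on generators, using ASF-regularity of $\widetilde{A^{op}}$ applied to $\tilde{e}_{-(j+l)}\widetilde{A^{op}}[d]$, shows that the twists by $l$ and the shifts by $\pm d$ cancel, yielding $(D\operatorname{R}\tilde{\tau}\circ D\operatorname{R}\tau)(e_{j}A) \cong e_{j}A$. To finish, I would construct a natural transformation ${\sf id} \Rightarrow D\operatorname{R}\tilde{\tau}\circ D\operatorname{R}\tau$ by unwinding the two applications of local duality through the biduality isomorphism provided by Corollary~\ref{cor.D}, and then invoke a standard way-out functor argument: the natural transformation is an isomorphism on the free finitely generated modules, both functors have bounded cohomological amplitude, and every object of ${\sf coh}\,A$ admits a bounded above free finitely generated resolution, so the transformation is an isomorphism on all of ${\sf D}^{b}({\sf coh}\,A)$.

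The main obstacle I expect is producing the natural transformation ${\sf id} \Rightarrow D\operatorname{R}\tilde{\tau}\circ D\operatorname{R}\tau$ in a way that is genuinely functorial and compatible with the Morita equivalences of Proposition~\ref{prop.equiv}, in particular the identification between left $A$-modules and right $\widetilde{A^{op}}$-modules that underlies the statement of the theorem. Once this biduality morphism has a clean bimodule-theoretic formulation, the remaining computations on generators and the way-out promotion are routine.
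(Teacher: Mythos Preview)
Your proposal is correct and follows essentially the same route as the paper: rewrite both functors via local duality as $D\operatorname{R}\tau$ and $D\operatorname{R}\tilde{\tau}$, verify on the free generators $e_jA$ that the composite returns $e_jA$ (using ASF-regularity of $A$ and of $\widetilde{A^{op}}$ together with biduality from Remark~\ref{remark.ext}), and then extend to all of ${\sf D}^b({\sf coh}\,A)$ via free resolutions. The one noteworthy simplification in the paper is that it invokes Proposition~\ref{prop.gldim} and Corollary~\ref{cor.min} to obtain \emph{finite length} finitely generated free resolutions (the ASF-regularity hypothesis bounds projective dimension by $d$), so the extension step reduces to a finite d\'evissage along distinguished triangles rather than a general way-out argument on bounded-above resolutions; this also largely sidesteps the naturality issue you flag, since the isomorphisms on generators assemble through finitely many cones.
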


\begin{proof}
We first note that since $A$ is connected and coherent, $A$ is right and left Ext-finite by Lemma \ref{lemma.res}.  By Theorem \ref{thm.ld1}, $\operatorname{R}\usHomao(D\operatorname{R}\tau(e_{i}A),D\operatorname{R}\tilde{\tau}(\widetilde{A^{op}}))$ is isomorphic to
$$
\operatorname{R}\usHomt(\operatorname{R}\usHom(e_{i}A, D \operatorname{R}\tau(A)),D\operatorname{R} \tilde{\tau}(\widetilde{A^{op}})).
$$
On the other hand, by Remark \ref{remark.ext}, ASF-regularity, and Theorem \ref{thm.ld1},
\begin{eqnarray*}
\operatorname{R}\usHomt(D\operatorname{R}\tau(e_{i}A),D\operatorname{R}\tilde{\tau}(\widetilde{A^{op}}))
& \cong & \operatorname{R}\usHomt(Ae_{i-l}[d],D\operatorname{R}\tilde{\tau}(\widetilde{A^{op}})) \\
& \cong & \operatorname{R}\usHomt(\tilde{e}_{-i+l}\widetilde{A^{op}}[d],D\operatorname{R}\tilde{\tau}(\widetilde{A^{op}})) \\
& \cong & \operatorname{R}\usHomt(\tilde{e}_{-i+l}\widetilde{A^{op}},D\operatorname{R}\tilde{\tau}(\widetilde{A^{op}}))[-d] \\
& \cong & D \operatorname{R}\tilde{\tau}(\tilde{e}_{-i+l}\widetilde{A^{op}})[-d] \\
& \cong & \widetilde{A^{op}}\tilde{e}_{-i} \\
& \cong & e_{i}A
\end{eqnarray*}
in ${\sf D}^{b}({\sf coh }A)$ for every $i \in \mathbb{Z}$.  Since every object in ${\sf D}^{b}({\sf coh }A)$ has a finite length, finitely generated free resolution by Lemma \ref{lemma.res}, Proposition \ref{prop.gldim}, and Corollary \ref{cor.min}, the result follows.
\end{proof}

\begin{remark} \label{remark.jorg}
By Corollary \ref{cor.asfas}, the conclusion of Theorem \ref{theorem.main} holds if $A_{ii}=k$ for all $i$ and the ASF-regularity hypothesis is replaced by AS-regularity.
\end{remark}

\begin{theorem} \label{theorem.main2}
Suppose $A$ is an Ext-finite connected $\mathbb{Z}$-algebra such that $A_{ii}=k$ for all $i$, and such that $\operatorname{R}\tau(A) \cong \operatorname{R}\tilde{\tau}(\widetilde{A^{op}})$ in ${\sf D}({\sf Bimod }(A-A))$.  Then the following are equivalent:
\begin{enumerate}
\item{} $A$ is AS-regular of dimension $d$ and of Gorenstein parameter $l$.

\item{} $A^{op}$ is AS-regular of dimension $d$ and of Gorenstein parameter $l$.

\item{} $A$ is ASF-regular of dimension $d$ and of Gorenstein parameter $l$.

\item{} $A^{op}$ is ASF-regular of dimension $d$ and of Gorenstein parameter $l$.
\end{enumerate}
\end{theorem}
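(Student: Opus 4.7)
The plan is to derive the four-way equivalence through the unconditional pairs $(1)\Leftrightarrow(3)$ and $(2)\Leftrightarrow(4)$ together with $(3)\Leftrightarrow(4)$, which invokes the hypothesis $\operatorname{R}\tau(A)\cong\operatorname{R}\tilde{\tau}(\widetilde{A^{op}})$. The forward implications $(1)\Rightarrow(3)$ and $(2)\Rightarrow(4)$ are immediate: $(1)\Rightarrow(3)$ is Corollary~\ref{cor.asfas}, and $(2)\Rightarrow(4)$ is the same corollary applied to $\widetilde{A^{op}}$, which is Ext-finite by Lemma~\ref{lemma.extagain}, connected by the remark following Definition~\ref{def.3.1}, and satisfies $(\widetilde{A^{op}})_{ii}=A_{-i,-i}=k$.

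For $(3)\Rightarrow(1)$, I would run the computation of Corollary~\ref{cor.gor0} in reverse. ASF-regularity supplies $\operatorname{R}^d\tau(e_jA)\cong D(Ae_{j+l})$ with $\operatorname{R}^q\tau(e_jA)=0$ for $q\neq d$, which by Lemma~\ref{lemma.torsfunct} equals $\dlim\usExt^q(A/A_{\geq n},e_jA)$. The long exact sequence induced by
\[0\to A_{\geq n}/A_{\geq n+1}\to A/A_{\geq n+1}\to A/A_{\geq n}\to 0,\]
together with Corollary~\ref{cor.extstar}, which identifies $\usExt^q(A_{\geq n}/A_{\geq n+1},e_jA)_m\cong \sExt^q(e_{m+n}A_0,e_jA)\otimes A^*_{m,m+n}$, permits an inductive computation of $\usExt^q(A/A_{\geq n},e_jA)$ as a filtration whose subquotients are built from the unknowns $\sExt^q(e_{m+n}A_0,e_jA)$. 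Passing to the colimit in $n$ and matching against the ASF-prescribed value $\operatorname{R}^d\tau(e_jA)_{j+l-i}\cong A^*_{j+l-i,j+l}$ forces $\sExt^q(e_iA_0,e_jA)=k\,\delta_{q,d}\delta_{j,i-l}$; the nonvanishing just obtained, combined with the ASF projective-dimension bound, yields $\operatorname{pd}(e_iA_0)=d$ for all $i$. The implication $(4)\Rightarrow(2)$ follows by applying this same argument to $\widetilde{A^{op}}$.

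For $(3)\Leftrightarrow(4)$, I would invoke the hypothesis $\operatorname{R}\tau(A)\cong\operatorname{R}\tilde{\tau}(\widetilde{A^{op}})$ in ${\sf D}({\sf Bimod}(A-A))$. The projective-dimension clause is symmetric under Corollary~\ref{cor.sbalance} combined with the equivalences of Proposition~\ref{prop.equiv}. For the cohomological clause, the hypothesized bimodule isomorphism identifies $e_j\operatorname{R}^d\tau(A)\cong D(Ae_{j+l})$ with $\tilde{e}_{-j}\operatorname{R}^d\tilde{\tau}(\widetilde{A^{op}})\cong D(\widetilde{A^{op}}\tilde{e}_{-j-l})$ after transporting modules through Proposition~\ref{prop.equiv}, which negates degrees and swaps left/right structures.

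The principal obstacle I foresee is the back-solving in $(3)\Rightarrow(1)$: unwinding Corollary~\ref{cor.gor0}'s forward induction into a reverse induction that recovers $\sExt^q(e_iA_0,e_jA)$ from knowledge of $\operatorname{R}^d\tau(e_jA)$ requires careful control of the long exact sequences and of the direct-limit passage, including vanishing arguments at each stage to ensure the extracted Ext values are as advertised. A secondary subtlety in $(3)\Leftrightarrow(4)$ is transporting the hypothesized bimodule isomorphism through the graded Morita equivalences of Proposition~\ref{prop.equiv} so that the two ASF descriptions really line up.
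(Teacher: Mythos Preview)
Your overall logical scheme---establishing $(1)\Leftrightarrow(3)$ and $(2)\Leftrightarrow(4)$ separately, then linking them via $(3)\Leftrightarrow(4)$ using the hypothesis---differs from the paper's route, which cycles $(1)\Rightarrow(3)\Rightarrow(2)\Rightarrow(4)\Rightarrow(1)$. The paper never attempts $(3)\Rightarrow(1)$ directly; instead it proves $(3)\Rightarrow(2)$ by applying local duality (Theorem~\ref{thm.ld1}) to $\widetilde{A^{op}}$. Concretely, the hypothesis $\operatorname{R}\tau(A)\cong\operatorname{R}\tilde{\tau}(\widetilde{A^{op}})$ together with ASF-regularity of $A$ yields $\tilde{e}_{j}D\operatorname{R}\tilde{\tau}(\widetilde{A^{op}})\cong \tilde{e}_{j-l}\widetilde{A^{op}}[d]$, and then local duality applied to the simple $\tilde{e}_{i}(\widetilde{A^{op}})_{0}$ computes $\operatorname{R}\sHomt(\tilde{e}_{i}(\widetilde{A^{op}})_{0},\tilde{e}_{j}\widetilde{A^{op}})$ in one stroke, giving the Gorenstein condition for $\widetilde{A^{op}}$ directly.

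Your back-solving strategy for $(3)\Rightarrow(1)$ can be made to work, and it is worth noting that it establishes something slightly stronger than the paper proves: the equivalence $(1)\Leftrightarrow(3)$ holds without invoking the bimodule hypothesis at all. The key observations you would need are (i) for $q<d$, the vanishing $\usExt^{q-1}(A_{\geq n}/A_{\geq n+1},e_{j}A)=0$ (by induction and Corollary~\ref{cor.extstar}) forces the direct system $\{\usExt^{q}(A/A_{\geq n},e_{j}A)\}_{n}$ to consist of injections, so its vanishing colimit forces each term to vanish; (ii) for $q=d$, the image of $\usExt^{d}(A_{0},e_{j}A)$ in $\operatorname{R}^{d}\tau(e_{j}A)\cong D(Ae_{j+l})$ is annihilated by $A_{\geq 1}$ and hence lies in the socle, which is $D(A_{0}e_{j+l})$ concentrated in degree $j+l$; and (iii) if $\mathcal{E}xt^{d}(e_{j+l}A_{0},e_{j}A)$ were zero, all the successive subquotients would vanish and the colimit would be zero, contradicting $D(Ae_{j+l})\neq 0$. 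None of this is in your proposal, and you flag it yourself as the principal obstacle; the paper simply sidesteps it with local duality.

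On the other hand, your proposed $(3)\Leftrightarrow(4)$ is less routine than you indicate. ASF-regularity of $A$ describes the \emph{rows} $e_{j}\operatorname{R}\tau(A)$ as right $A$-modules, whereas ASF-regularity of $\widetilde{A^{op}}$ concerns rows $\tilde{e}_{j}\operatorname{R}\tilde{\tau}(\widetilde{A^{op}})$ as right $\widetilde{A^{op}}$-modules, which under the equivalences of Proposition~\ref{prop.equiv} correspond to \emph{columns} of $\operatorname{R}\tau(A)$ with their left $A$-structure. Transporting the hypothesized bimodule isomorphism through these identifications so that the two ASF conditions genuinely match requires exactly the kind of careful index-chasing the paper carries out in its proof of $(3)\Rightarrow(2)$; you should expect comparable work there.
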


\begin{proof}
The fact that (1) implies (3) (and (2) implies (4)) is Corollary \ref{cor.asfas}.  We now prove (3) implies (2).  We note that since $A$ is right Ext-finite, the proof of Lemma \ref{lemma.extend} implies that if $M$ is an $A-A$-bimodule, then $e_{i}\operatorname{R}\tau(M) \cong \operatorname{R}\tau(e_{i}M)$ for all $i \in \mathbb{Z}$.  Therefore, by ASF-regularity,
\begin{eqnarray*}
(D\operatorname{R}\tilde{\tau}(\widetilde{A^{op}}))e_{j} & \cong & \tilde{e}_{-j}(D\operatorname{R}\tilde{\tau}(\widetilde{A^{op}})) \\
& \cong & D(e_{j}\operatorname{R} \tilde{\tau}(\widetilde{A^{op}})) \\
& \cong & D(e_{j}\operatorname{R}\tau(A)) \\
& \cong & D(\operatorname{R}\tau(e_{j}A)) \\
& \cong & Ae_{j-l}[d]
\end{eqnarray*}
in ${\sf D}(A-{\sf Gr })$, where we have implicitly used Proposition \ref{prop.equiv} in the first two isomorphisms.  This implies that $\tilde{e}_{j}(D\operatorname{R}\tilde{\tau}(\widetilde{A^{op}})) \cong \tilde{e}_{j+l}\widetilde{A^{op}}[d]$ in ${\sf D}({\sf Gr }\widetilde{A^{op}})$.


Since $A$ is left Ext-finite, $\widetilde{A^{op}}$ is right Ext-finite by Lemma \ref{lemma.extagain}, so that Theorem \ref{thm.ld1} implies (in the third isomorphism below) that
\begin{eqnarray*}
\operatorname{R}\sHomt(\tilde{e}_{i}(\widetilde{A^{op}})_{0},\tilde{e}_{j}\widetilde{A^{op}}) & \cong & \operatorname{R}\sHomt(\tilde{e}_{i}(\widetilde{A^{op}})_{0},\tilde{e}_{j-l}(D\operatorname{R}\tilde{\tau}(\widetilde{A^{op}}))[-d]) \\
& \cong & \tilde{e}_{j-l}\operatorname{R}\usHomt(\tilde{e}_{i}(\widetilde{A^{op}})_{0}, (D\operatorname{R}\tilde{\tau}(\widetilde{A^{op}})))[-d] \\
& \cong & \tilde{e}_{j-l}D \operatorname{R}\tilde{\tau}(\tilde{e}_{i}(\widetilde{A^{op}})_{0})[-d] \\
& \cong & \tilde{e}_{j-l}(D \tilde{e}_{i}(\widetilde{A^{op}})_{0})[-d] \\
& \cong & \begin{cases} k[-d] & \mbox{ if $j=i+l$} \\ 0 & \mbox{ otherwise.}\end{cases}
\end{eqnarray*}
By Corollary \ref{cor.sbalance}, $\operatorname{pd }e_{i}(\widetilde{A^{op}})_{0} \leq d$, while by the calculation above, $\operatorname{pd }e_{i}(\widetilde{A^{op}})_{0} \geq d$.  This establishes that (3) implies (2), as well as the fact that (4) implies (1).
\end{proof}




\end{document}